\title[Families over Fano varieties]{Families of canonically polarized manifolds over log Fano varieties}
\author{Daniel Lohmann}
\address{Daniel Lohmann\\
Albert-Ludwigs-Universit\"at Freiburg\\
Mathematisches Institut\\
Eckerstrasse 1\\
D-79104 Freiburg }
\email{\href{mailto:lohmann.daniel@arcor.de}{lohmann.daniel@arcor.de}}
\thanks{The author gratefully acknowledges partial support by the
  DFG-Forschergruppe 790 ``Classification of Algebraic Surfaces and Compact
  Complex Manifolds''.}
\definecolor{darkgreen}{rgb}{0,.4,0}
\definecolor{darkblue}{rgb}{0,0,.5} 
\newcommand*\switchenum{
\renewcommand*\theenumi{\alph{enumi}}
\renewcommand*\labelenumi{\theenumi)}
\renewcommand*\theenumii{\arabic{enumii}}
\renewcommand*\labelenumii{\theenumii)}
}
\numberwithin{subsection}{section}
\theoremstyle{plain}    
\newtheorem{thm}{Theorem}[section]
\newtheorem{defn}[thm]{Definition}
\numberwithin{equation}{thm}
\numberwithin{figure}{section}
\theoremstyle{plain}    
\newtheorem{cor}[thm]{Corollary}
\newtheorem{lem}[thm]{Lemma}
\newtheorem{ex}[thm]{Example}
\newtheorem{fact}[thm]{Fact}
\theoremstyle{plain}    
\newtheorem{prop}[thm]{Proposition}
\newtheorem{proclaim-special}[thm]{\specialthmname}
\theoremstyle{remark}
\newtheorem{rem}[thm]{Remark}
\newtheorem{obs}[thm]{Observation}
\newtheorem*{claim*}{Claim} 
\newtheorem{notation}[thm]{Notation}
\DeclareMathOperator{\codim}{codim}
\DeclareMathOperator{\NE}{NE}
\DeclareMathOperator{\NM}{NM}
\DeclareMathOperator{\Sym}{Sym}
\DeclareMathOperator{\cont}{cont}
\DeclareMathOperator{\Supp}{Supp}
\DeclareMathOperator{\Div}{Div}
\DeclareMathOperator{\WDiv}{WDiv}
\newcommand{\bN}{\mathbb N}
\newcommand{\bP}{\mathbb P}
\newcommand{\bQ}{\mathbb Q}
\newcommand{\bR}{\mathbb R}
\newcommand{\sA}{\mathcal A}
\newcommand{\sC}{\mathcal C}
\newcommand{\sD}{\mathcal D}
\newcommand{\fE}{\mathfrak E}
\newcommand{\fM}{\mathfrak M}
\newcommand{\fX}{\mathfrak X}
\newcommand{\ol}{\overline}
\newcommand{\rto}{\dashrightarrow}
\newcommand{\lrd}{\lfloor}
\newcommand{\rrd}{\rfloor}
\begin{document}

\maketitle

\begin{abstract}
Let $(X,D)$ be a dlt pair, where $X$ is a normal projective variety. We show that any smooth family of canonically polarized varieties over $X\setminus\Supp\lrd D\rrd$ is isotrivial if the divisor $-(K_X+D)$ is ample.
This result extends results of Viehweg-Zuo and Kebekus-Kov{\'a}cs. 

To prove this result we show that any extremal ray of the moving cone is generated by a family of curves, and these curves are contracted after a certain run of the minimal model program. In the log Fano case, this generalizes a theorem by Araujo from the klt to the dlt case.

In order to run the minimal model program, we have to switch to a $\bQ$-factorialization of $X$. As $\bQ$-factorializations are generally not unique, we use flops to pass from one $\bQ$-factorialization to another, proving the existence of a $\bQ$-factorialization suitable for our purposes.
\end{abstract}
\tableofcontents

\section{Introduction and main results}
\subsection{Introduction and main results}
Let $f^\circ : Y^\circ \to X^\circ$ be a smooth projective family of canonically polarized manifolds over a quasi projective manifold $X^\circ$ of dimension at most three. Kebekus and Kov{\'a}cs proved in \cite{KK10} and \cite{KK08} that the variation of the family is bounded by the Kodaira-Iitaka-dimension $\kappa(X^\circ)$. They distinguish two cases.
\begin{enumerate}
\item  If $\kappa(X^\circ)\geq 0$ then the variation is less than or equal to $\kappa(X^\circ)$. In this case the Kodaira-Iitaka-dimension is an upper bound for the variation.

\item If $\kappa(X^\circ)=-\infty$ then the variation of $f^\circ$ is not maximal.
\end{enumerate}
The upper bound given in the second case is generally optimal. For instance, any family of maximal variation over a variety $Z$ can be pulled back to a family over $Z\times\bP^1$. The base $Z\times\bP^1$ has negative Kodaira-Iitaka dimension, and the variation of the family is given by $\dim Z$. 

We ask if we obtain better results if we make additional assumptions. 
Clearly, if $X^\circ=\bP^1$, then Kebekus' and Kov{\'a}cs' result immediately implies that the family is isotrivial, see also \cite{Kov00}. This in turn implies that the family is necessarily isotrivial on rationally connected varieties. 
Therefore, isotriviality holds if $X^\circ$ is a Fano manifold, i.e., $X^\circ$ is projective and $-K_{X^\circ}$ ample.
 
In this paper, we will focus on \emph{log Fano varieties}, these are dlt pairs $(X,\Delta)$ with $-(K_X+\Delta)$ ample. 
The main result of this paper is stated in the following Theorem.

\begin{thm}[Isotriviality Theorem]\label{thm:isotrivial}
  Let $(X,\Delta)$ be a dlt pair where $\Delta$ is an effective $\bR$-divisor,
  where $-(K_X+\Delta)$ is $\bR$-ample, and $X$ is projective. Let $T\subset
  X$ be a subvariety of codimension greater or equal than two such that
  $X\setminus(T\cup\Supp\lrd\Delta\rrd)$ is smooth. Then any smooth family of
  canonically polarized varieties over $X\setminus(T\cup\Supp\lrd\Delta\rrd)$
  is isotrivial.
\end{thm}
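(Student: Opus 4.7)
The plan is to deduce global isotriviality from isotriviality on a covering family of rational curves, using the moving-cone/MMP result announced in the abstract.

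First, I pass to a small $\bQ$-factorialization $\pi : X' \to X$ as constructed earlier in the paper, and set $\Delta' = \pi^*\Delta$. Because $\pi$ is small, $(X',\Delta')$ is still dlt and $-(K_{X'}+\Delta') = \pi^*(-(K_X+\Delta))$ is $\bR$-big and $\bR$-nef. The pulled-back family $f' : Y' \to U' := \pi^{-1}(X\setminus(T\cup\Supp\lrd\Delta\rrd))$ is again smooth and canonically polarized; isotriviality of $f^\circ$ is equivalent to that of $f'$ because $\pi$ is a small birational morphism, so moduli maps agree on the common open subset of codimension-two complement. Note that $X'\setminus U'$ has codimension at least two in $X'$.

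Second, I invoke the main technical result of the paper: every extremal ray of the moving cone $\ol{\NM}(X')$ is generated by a family of curves $\{C_t\}$ that is contracted after a suitable run of the $(K_{X'}+\Delta')$-MMP. Since $-(K_{X'}+\Delta')$ is big and nef and $(X',\Delta')$ is dlt log Fano, such an MMP terminates, and the contracted curves are $(K_{X'}+\Delta')$-negative, hence rational by the cone theorem. Because the family is movable it covers $X'$; a general member $C\cong\bP^1$ can therefore be chosen to miss the codimension-two locus $X'\setminus U'$ and to meet $\Supp\lrd\Delta'\rrd$ in only finitely many points.

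Third, I restrict $f'$ to such a general $C$. The restriction is a smooth family of canonically polarized manifolds over $C \cap U'$, an open subset of $\bP^1$ obtained by removing finitely many points, which has Kodaira–Iitaka dimension $-\infty$. By the Kebekus–Kov\'acs isotriviality theorem (already Migliorini/Kov\'acs in the one-dimensional base case), $f'|_C$ is isotrivial. The moduli map of $f'$ therefore contracts every member of our covering family; as these cover $X'$, the moduli map is constant on $X'$, so $f'$, and consequently $f^\circ$, is isotrivial.

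The main obstacle is producing the covering family of rational curves with the right transversality to $\lrd\Delta\rrd$ and avoidance of $T$, which is precisely what the dlt extension of Araujo's moving-cone theorem provides — and that extension in turn depends on the careful choice of a $\bQ$-factorialization via flops, as announced in the abstract. Once that input is in hand, the reduction to Kebekus–Kov\'acs on $\bP^1$ is essentially formal.
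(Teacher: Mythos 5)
Your step three is where the argument breaks down, and the gap is exactly the one the paper flags in the Introduction. The curves $C$ produced by the moving-cone machinery are not rational in general: in Lemma~\ref{lem:moving} they are constructed as complete intersections $F\cap D_1\cap\cdots\cap D_{k-1}$ of very ample divisors in a general fiber $F$ of the Mori fibration, so their genus is typically large. And even granting rationality, the set $C\cap U'$ is $\bP^1$ minus the points where $C$ meets $\Supp\lrd\Delta'\rrd$ together with the (avoidable) bad locus; if $C$ meets $\lrd\Delta'\rrd$ in three or more points, the \emph{logarithmic} Kodaira--Iitaka dimension of $(\bP^1, C\cap\lrd\Delta'\rrd)$ is $1$, not $-\infty$, so the one-dimensional Kov\'acs/Kebekus--Kov\'acs bound does not give isotriviality on $C$. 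As the paper states explicitly, ``it is still an open question if log Fano varieties are rationally connected by curves that intersect $\Delta$ in at most two points,'' and this is precisely why the direct restriction-to-curves strategy cannot work here.

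The paper's proof avoids this by never invoking the one-dimensional result on the moving curve directly. Instead it runs a simultaneous induction on $\dim X$ together with Theorem~\ref{thm:moduli} (the statement that any terminating MMP factors the moduli map birationally). Given Theorem~\ref{thm:moduli}$_n$, one knows that the moduli map $\mu$ factors through $\pi\circ\lambda$ for the Mori fibration associated to the chosen exposed ray; since the moving curves map into fibers of $\pi$, constancy of $\mu$ on them is a \emph{structural} consequence of this factorization (Proposition~\ref{prop:curvetriv}), with no need for the curves to be rational or for $C\cap U'$ to have negative log Kodaira dimension. The genuinely delicate remaining point is that one must find an exposed moving ray whose curves are not numerically trivial against a chosen Weil divisor (the pullback of a hyperplane section of $\mathfrak M$), which requires passing to a carefully chosen $\bQ$-factorialization via flops (Proposition~\ref{prop:transversal}); your sketch of step one only takes an arbitrary $\bQ$-factorialization. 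Finally, the inductive loop is closed by showing Theorem~\ref{thm:isotrivial}$_n$, applied to a general Mori fiber, gives Theorem~\ref{thm:moduli}$_{n+1}$, with the base case handled by the Viehweg--Zuo sheaf / Bogomolov--Sommese argument in Picard number one. Your proposal uses none of this and, as it stands, the key reduction to a curve is false.
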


It is still an open question if log Fano varieties are rationally connected by curves that intersect $\Delta$ in at most two points. Therefore, the short line of argument given above to show that families over Fano manifolds are isotrivial does not apply.

Instead, we will use Kebekus' and Kov{\'a}cs' result which asserts that any run of the minimal model program for $(X,\Delta)$ factorizes the moduli map birationally.
The following theorem, which is a generalization of a result by Araujo \cite[Theorem 1.1]{Ara08}, describes the different types of minimal model programs with scaling that can be run. 

\begin{thm}[Moving Cone Theorem]\label{thm:mcone} 
  Let $(X,\Delta)$ be a $\bQ$-factorial dlt pair, where $\Delta$ is an
  effective $\bR$-divisor and $X$ is projective. Let $R$ be an exposed ray of
  the cone $\ol\NM_1(X)+\ol\NE_1(X)_{K_X+\Delta \geq 0}$ that intersects
  $(K_X+\Delta)$ negatively.  Then there is an irreducible locally closed
  subset $H_R$ of the Hilbert scheme of curves on $X$ such that
  \begin{enumerate}
  \item each closed point of $H_R$ corresponds to a curve that generates $R$,
  \item for any closed subset $Z\subset X$ of $\codim_X(Z)\geq 2$, there is a
    non-empty open subset $H^Z_R$ of $H_R$ such that any curve that
    corresponds to a closed point of $H^Z_R$ avoids $Z$,\label{item:codim}
  \item there exists a run of the minimal model program with scaling that
    terminates with a Mori fiber Space
    \[
    \begin{xy}
      \xymatrix{
        X \ar@{ -->}[r]^{\lambda_R} & X_R \ar[d]^{\pi_R} \\
        & B_R
      }
    \end{xy}
    \]
    such that any closed point of $H_R$ corresponds to a curve that is
    contained in the open set $U\subset X$, where $\lambda_R$ is an
    isomorphism of $U$ onto its image. Moreover, the image of this curve via
    $\lambda_R$ is contained in a fiber of $\pi_R$.
   \end{enumerate}

\end{thm}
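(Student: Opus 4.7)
The plan is to follow Araujo's approach \cite{Ara08} and adapt the MMP-with-scaling argument to the dlt setting. Since $R$ is exposed in the cone $\ol\NM_1(X)+\ol\NE_1(X)_{K_X+\Delta\geq 0}$, first choose a supporting $\bR$-Cartier divisor class $D\in N^1(X)_\bR$: $D$ is non-negative on this cone and vanishes precisely on $R$. By BDPP-style duality between the closed moving-curves cone and the pseudoeffective cone of divisors (applied, if need be, on a log resolution of $X$), the class $D$ is pseudoeffective; tautologically, $D$ is nef on every $(K_X+\Delta)$-non-negative curve class.

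Next, fix an ample divisor $A$ and, for small $\epsilon>0$, scale a $(K_X+\Delta)$-MMP by the big divisor $D+\epsilon A$. Thanks to the $\bQ$-factoriality of $X$ and the dlt assumption, the required flips and divisorial contractions exist, and termination of MMP with big scaling yields a birational map $\lambda_\epsilon:X\rto X_\epsilon$ ending with a Mori fiber space $\pi_\epsilon:X_\epsilon\to B_\epsilon$. The crucial step is to track the scaling thresholds along the MMP and to let $\epsilon\to 0$: because $D$ is pseudoeffective with $D\cdot R=0$ while $(K_X+\Delta)\cdot R<0$, the final extremal ray contracted in the limit is forced to have numerical class proportional to $R$. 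This produces the birational map $\lambda_R$ and the Mori fiber space $\pi_R:X_R\to B_R$ of item~(3).

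For item~(1), the general fibers of $\pi_R$ are Fano-type and hence rationally connected, so they are covered by a bounded family of rational curves of small degree obtained by cone/bend-and-break arguments; pulling this family back along $\lambda_R^{-1}$ restricted to the iso-locus $U$ gives a family on $X$ whose numerical class generates $R$, and $H_R$ is defined as the irreducible component of $\Hilb(X)$ containing it. For item~(2), the curves parametrized by $H_R$ sweep out a dense open subset of $X$ (the preimage in $U$ of the good locus of $\pi_R$); a standard dimension count then shows that the sublocus of $H_R$ parametrizing curves meeting a fixed $Z\subset X$ with $\codim_X(Z)\geq 2$ is a proper closed subset, so $H^Z_R$ is open and non-empty.

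The main obstacle in passing from Araujo's klt setting to the dlt one is controlling the supporting divisor $D$ under flips and divisorial contractions, especially when components of $\lrd\Delta\rrd$ are contracted. One has to verify that the strict transform of $D$ remains pseudoeffective and continues to numerically cut out the image of $R$, and one has to invoke termination of MMP with scaling in the dlt $\bQ$-factorial setting. These ingredients are supplied by the now-standard dlt MMP package (Birkar, Hacon-Xu), but combining them with a careful limit argument as $\epsilon\to 0$ to identify the final ray with $R$ is where the technical heart of the proof will lie.
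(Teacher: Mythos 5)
Your high-level strategy—use a supporting divisor for the exposed ray $R$, run an MMP with scaling to reach a Mori fiber space, and pull back curves in fibers—is the same as the paper's. But the way you propose to make the final ray \emph{equal} $R$ diverges from the paper, and that divergence is where the gap lies.

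You propose scaling by $D+\epsilon A$ and then letting $\epsilon\to 0$, appealing to a "careful limit argument" which you acknowledge is the technical heart. This is the missing idea: for each fixed $\epsilon>0$ the scaling hyperplane $(D+\epsilon A)^\perp$ does not pass through $R$ (since $A$ is ample), so the Mori fiber space produced by the $\epsilon$-scaled MMP generically contracts curves whose class is \emph{not} on $R$; moreover the sequence of contractions can change discontinuously with $\epsilon$, so there is no natural sense in which the outputs converge. The paper avoids this entirely via Lemma~\ref{lem:bounding}: it shows there is $a>0$ with $D-a(K_X+\Delta)$ ample and sets $H:=l\bigl(D-a(K_X+\Delta)\bigr)$, so that $K_X+\Delta+\sigma H$ is numerically \emph{proportional to $D$} at the pseudoeffective threshold $\sigma=\tfrac{1}{al}$, and $(K_X+\Delta+sH)^\perp$ misses the cone for $s>\sigma$. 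A \emph{single} run of the MMP with scaling of this $H$ then works; the scaling threshold $s_l$ at the last step is pinned down to equal $\sigma$ by comparing with the perturbed klt pair $(X,\Delta_\varepsilon)$ (Theorem~\ref{thm:dltmmp}\,(\ref{item:diffmmp})), with no limit over a family of MMPs. Your proposal needs precisely this kind of exact identification; without it, item~(1) is not established.

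Two smaller remarks. For item~(1) you invoke rational connectedness of the Mori fibers and bend-and-break; the paper instead takes complete-intersection curves in general fibers of $\pi_R$ and pulls them back along $\lambda_R^{-1}$ (Lemma~\ref{lem:moving}). The complete-intersection construction is both simpler and delivers item~(2) for free, since general members of very ample linear systems avoid any subset of codimension $\geq 2$, whereas with a bend-and-break family you would still have to check the avoidance property. Finally, the dlt-to-klt reduction you mention (to ensure termination and existence of flips) is indeed needed, and in the paper it is carried out via Proposition~\ref{prop:kltdlt} and Lemma~\ref{lem:flip}, which show the \emph{same} sequence of contractions is a scaling MMP for both the dlt pair and its klt perturbation; this rigidity, not a limit over scaling divisors, is what controls the behavior as the boundary is perturbed.
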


\subsection{Outline of paper} 
The Isotriviality Theorem \ref{thm:isotrivial} is a consequence of the Moving Cone Theorem \ref{thm:mcone}, thus we will first focus on the proof of the latter. In Section \ref{sec:dltmmp} we recall some facts and results of \cite{BCHM10} concerning the minimal model program, then we will explain the minimal model program with scaling. Finally, we will generalize some results for klt pairs to the dlt case. The proof of Theorem \ref{thm:mcone} is then given in Section \ref{sec:cone}.

In Section \ref{sec:nonfactorial} we will analyze different $\bQ$-factorializations of dlt pairs and show that a flop of a $\bQ$-factorialization yields a new one.
 We will use this result to construct for each effective Weil divisor $D$ on a log Fano dlt pair a $\bQ$-factorialization $(Y,\Delta_Y)$ such that the strict transform of $D$ is 
not numerically trivial on all $(K_Y+\Delta_Y)$-negative exposed rays of the cone $\ol\NM_1(Y)+\ol\NE_1(Y)_{K_Y+\Delta\geq0}$. 

In Section \ref{sec:isotrivial} we will finally prove the Isotriviality Theorem \ref{thm:isotrivial}. The proof is an induction on the dimension $n$ of the underlying variety. As part of the induction we prove Kebekus' and Kov{\'a}cs' result \cite[Theorem 1.2]{KK10} for varieties of negative Kodaira-Iitaka-dimension. 

Assuming Kebekus' and Kov{\'a}cs' result in dimension $n$, the moduli map induced by the family factors via any run of the minimal model program. An application of the Moving Cone Theorem \ref{thm:mcone} then describes the relevant minimal model programs in more detail. In particular, we will see that if $H_R$ is the set given in the Moving Cone Theorem, then the family restricted to a curve that corresponds to a general element of $H_R$ is isotrivial. The ampleness of $-(K_X+\Delta)$ implies that there are sufficiently many such rays. This finally implies the Isotriviality Theorem for $n$-dimensional varieties.

 On the other hand, the Isotriviality Theorem in dimension $n$, and the recently proven Bogomolov-Sommese vanishing for lc pairs \cite[Theorem 7.2]{GKKP10} imply  Kebekus' and Kov{\'a}cs' result for $(n+1)$-dimensional varieties of negative Kodaira-Iitaka-dimension. This completes the proof.

The last Section~\ref{sec:corollary} finally shows that the Isotriviality Theorem can be used to obtain a description of the moving cone of varieties that admit non-isotrivial families.

\subsection{Acknowledgements}
The results of this paper are part of the author's forthcoming Ph.D.~thesis. He would like to thank his supervisor Stefan Kebekus and his research group, especially Patrick Graf, Daniel Greb, and Sebastian Neumann.
He would also like to thank the research group's guests during the last two years for inspiring discussions.

The proof of the Moving Cone Theorem uses many methods of Carolina Araujo's proof for the klt case \cite[Theorem 1.1]{Ara08}. The structure of the cone $\ol\NM_1(X)+\ol\NE_1(X)_{K_X+\Delta\geq 0}$ has also been subject of work by Brian Lehmann \cite{Leh09} and, for Fano three- and fourfolds, by Sammy Barkowski \cite{Bar10}.

Questions concerning the variation of families have been discussed by many authors. Related results are shown in \cite[Theorem 0.1]{VZ02} by Eckart Viehweg and Kang Zuo, and by Stefan Kebekus and S{\'a}ndor Kov{\'a}cs in \cite{KK08}, \cite{KK08a}, and \cite{KK10}. Isotriviality criteria for families of canonically polarized varieties have a long history in Algebraic Geometry. We refer to \cite{Keb11} and \cite{Kov09} for a more complete overview.

\section{The minimal model program with scaling}\label{sec:dltmmp}
In this chapter we introduce the minimal model program with scaling and prove termination for the $\bQ$-factorial dlt case. This generalizes a result of \cite{BCHM10} from the klt to the dlt case.
 Although this generalization is probably well-known to experts, we will include a proof since  the methods used will be very useful to prove Theorem \ref{thm:mcone}.
\subsection{The standard minimal model program}
The reader who is not familiar with the classical minimal model program is referred to \cite{KM98}. Unless otherwise stated, a pair $(X,\Delta)$ consists of a projective normal variety $X$ and an $\bR$-divisor $\Delta$. We always demand that $K_X+\Delta$ is $\bR$-Cartier, but we do generally not assume that $X$ is $\bQ$-factorial.  
Moreover, we notice the following
\begin{rem}
 In \cite{KM98}, everything is stated for $\bQ$-divisors. Note that the relevant definitions of singularities can easily be extended to $\bR$-divisors. Moreover, using that $\bQ$ is dense in $\bR$ one can show that the Cone Theorem also holds for $\bQ$-factorial dlt pairs $(X,\Delta)$ with $\Delta$ being an $\bR$-divisor, see also Proposition \ref{prop:kltdlt}.

	A minimal model program may consist of infinitely many steps. If a minimal model program terminates, we call it a \emph{terminating minimal model program}.

	Each step of a minimal model program is either a divisorial contraction or a flip. If a minimal model program leads to a Mori fiber space $\pi:X_\lambda\to B$, then the map $\pi$ does not count as a step of the minimal model program.
\end{rem}

We will frequently use the following notation.
\begin{notation}\label{not:mmp}		
  Let $(X,\Delta)$ be a $\bQ$-factorial dlt pair, and let
\[
X=:X_0\stackrel{\varphi_1}{\rto} X_1 \stackrel{\varphi_2}\rto\dots\stackrel{\varphi_{n}}{\rto} X_n\stackrel{\varphi_{n+1}}{\rto}\dots
\]
be a (possibly infinite) run of the minimal model program. Let $i\in\bN$ such that the $i$th step $\varphi_{i}$ exists.
\begin{enumerate}
  \item Given an $\bR$-divisor $D$ on $X$, we set $D_0:=D$ and define an $\bR$-divisor $D_i$ on $X_i$ recursively as
\[
D_i:= (\varphi_{i})_*D_{i-1}.
\]
 \item We denote by $R_i\subset\ol\NE_1(X_{i-1})$ the $(K_{X_{i-1}}+\Delta_{i-1})$-negative extremal ray which is contracted or flipped by $\varphi_{i}$. If the minimal model program terminates with a Mori fiber space $X_m\to B$, we define $R_{m+1}$ analogously. 
\end{enumerate}
\end{notation}

\subsection{Pushforward and pullback of curves}
In the sequel we will sometimes have to take pushforward und pullback of numerical classes of 1-cycles. To define this, we use pullback and pushforward of classes of divisors and duality of the underlying vector spaces , see \cite[Chapter 3]{Bar08} and \cite[Chapter 4]{Ara08}.
\begin{defn}[Numerical pushforward and pullback of curves] \label{def:numerical} Let $f:X\rto Y$ be a birational map between $\bQ$-factorial varieties which is surjective in codimension one. 
Then we define the \emph{numerical pullback} and \emph{numerical pushforward}
\[
f^*:N_1(Y)\to N_1(X)\quad\text{and}\quad f_*:N_1(X)\to N_1(Y)
\]
as the dual maps of the pushforward and the pullback of divisors.
\end{defn}
\begin{rem}
If a curve is contained in the domain of the map, then the pushforward of its class coincides with the class of its cycle-theoretic pushforward, see \cite[Corollary 3.12]{Bar08}.

 On the other hand it is difficult to see what the pullback or pushforward of a curve is if it is contained in the indeterminacy locus of the underlying map. There are examples where the pullback of a curve behaves rather counterintuitively, see \cite[Examples 4.2 and 4.3]{Ara08}. 
\end{rem}
The definition above immediately implies the following identities.
\begin{prop}[Projection formulae]
  Let $f:X\rto Y$ be as in Definition~\ref{def:numerical}.
\begin{enumerate}
\item If $\gamma\in N_1(X)$ and $[D]\in N^1(Y)$, then $f_*\gamma\cdot [D] = \gamma\cdot f^*[D]$.
\item If $\gamma\in N_1(Y)$ and $[D]\in N^1(X)$, then $f^*\gamma\cdot [D] = \gamma\cdot f_*[D]$.
\end{enumerate}\qed
\end{prop}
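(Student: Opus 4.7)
The plan is to observe that both identities are purely formal consequences of Definition~\ref{def:numerical}, together with the fact that the intersection pairing $N_1(X)\times N^1(X)\to\bR$ is perfect and therefore identifies $N_1(X)$ with the dual of $N^1(X)$ (and likewise for $Y$). Under this identification, the ``numerical pushforward/pullback of curves'' is tautologically the transpose of the corresponding divisorial operation.

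For part (1), by Definition~\ref{def:numerical} the map $f_*\colon N_1(X)\to N_1(Y)$ is defined as the dual of the divisorial pullback $f^*\colon N^1(Y)\to N^1(X)$. Unwinding this duality through the intersection pairing gives, for every $\gamma\in N_1(X)$ and $[D]\in N^1(Y)$,
\[
f_*\gamma\cdot [D] \;=\; \bigl\langle f_*\gamma,\,[D]\bigr\rangle_{N_1(Y)\times N^1(Y)} \;=\; \bigl\langle \gamma,\,f^*[D]\bigr\rangle_{N_1(X)\times N^1(X)} \;=\; \gamma\cdot f^*[D],
\]
which is the first identity. Part (2) is completely analogous with the roles swapped: $f^*\colon N_1(Y)\to N_1(X)$ is by definition dual to the divisorial pushforward $f_*\colon N^1(X)\to N^1(Y)$, and the same unwinding produces $f^*\gamma\cdot [D]=\gamma\cdot f_*[D]$.

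The only substantive input is the hypothesis that $f$ is a birational map between $\bQ$-factorial varieties surjective in codimension one, which is precisely what makes the divisorial pushforward and pullback well-defined on numerical equivalence classes (and hence makes the dual maps on curves well-defined). This has already been absorbed into Definition~\ref{def:numerical}. I therefore expect no real obstacle: the proof is a short formal manipulation with dual linear maps, with no hidden geometric content, which is no doubt why the statement is marked with \texttt{\textbackslash qed}.
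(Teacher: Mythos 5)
Your proof is correct and is exactly the argument the paper has in mind: the paper gives no written proof (just \qed), because the identities are immediate from Definition~\ref{def:numerical}, which defines $f_*$ and $f^*$ on curves as the transposes of the divisorial pullback and pushforward under the intersection pairing.
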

\subsection{The minimal model program with scaling}

The existence of terminating minimal model programs can be proved if we take a given divisor into account.

\begin{defn}[Minimal model program with scaling]\label{def:smmp}
  Let $(X,\Delta)$ be a $\bQ$-factorial dlt pair, and let $H$ be an ample $\bR$-divisor such that $K_X+\Delta+ H$ is nef.
A \emph{(terminating) minimal model program with scaling of $H$} is a (terminating) minimal model program 
\[
X=:X_0\stackrel{\varphi_1}{\rto} X_1 \stackrel{\varphi_2}\rto\dots\stackrel{\varphi_{n}}{\rto} X_n\stackrel{\varphi_{n+1}}{\rto}\dots
\]
and a (finite) decreasing sequence of real numbers
\[
s_0\geq s_1\geq\dots\geq s_n\geq\dots\geq 0,
\]
such that for any $i$, where $R_{i}$ is defined, the following holds.
 \begin{enumerate}
\item The divisor $K_{X_{i-1}}+\Delta_{i-1}+s_{i-1}H_{i-1}$ is nef. \label{item:nef1}
\item \label{scont} The ray $R_i$ is contained in the hyperplane 
\[
(K_{X_{i-1}}+\Delta_{i-1}+s_{i-1}H_{i-1})^\perp\subset N_1(X).
\]

\item If the minimal model program terminates with a Mori fiber space $X_m\to B$, then $R_{m+1}\subset (K_{X_m}+\Delta_m+s_mH_m)^\perp$. 
\end{enumerate} 
We will denote a minimal model program with scaling of $H$ by the sequence of pairs $(\varphi_i, s_i)_i$. 

\end{defn}
\begin{rem}\label{rem:inf}
 An easy computation shows that the divisor $K_{X_i}+\Delta_i+s_{i-1}H_i$ is nef, see \cite[3.8]{Ara08}. Properties (\ref{item:nef1}) and (\ref{scont}) imply that $s_i$ is uniquely determined by the equation
\[
s_i= \inf\{s>0\,|\,K_{X_i}+\Delta_i+sH_i \text{ is nef}\}.
\]
We can therefore view a step of the minimal model program with scaling as
follows. The divisor $K_{X_i}+\Delta_i+s_{i-1}H_i$ is nef, and after scaling
$s$ down, it approaches the Mori cone and determines the ray $R_{i+1}$.  The
first step is visualized in the following picture.
\begin{tikzpicture}

\path[shade] (2,0) -- (3,3) -- (6,4) -- (9,3) -- (10,0) --cycle;
\path[draw]      (2,0) -- (3,3) -- (6,4) -- (9,3) -- (10,0)  node[right] at(4,1.5){The cone $\ol\NE_1(X)_{K_X+\Delta\leq 0}$}; 
\draw[dashed, style= thick] (0,0) -- (12.3,0) node[right]at(10.3,0.2){$(K_X +\Delta)^\perp$}; 
\draw (0,1.5) -- ++(4,4)node[left]{$(K_X+\Delta+sH)^\perp$} node[right]{for $s=1$} ; 
\draw (0,1) -- ++(6,4)node[right]{$(K_X+\Delta+s_0H)^\perp$}; 
\path (2.9,2.7) node[right]{$R_1$};
\draw[->,dotted,thick] (2.4,3.6) node[right]{$s\to s_0$} --(2.9,3.1);

\path (0,-0.3) node[right]{\small The first step of the minimal model  program with scaling of $H$.};
\path (0,-1);
\end{tikzpicture}
\end{rem}
\begin{rem}
 It is a priori not clear that minimal model programs with scaling exist generally, even if flips are known to exist. Given $s_i$ as in Remark \ref{rem:inf}, we have to ensure the existence of an extremal ray $R\subset(K_{X_i}+\Delta_i+s_iH_i)^\perp$ that intersects $K_{X_i}+\Delta_i$ \emph{negatively}. The statement that for dlt pairs such a ray indeed exists is given in \cite[Lemma 3.1]{Bir10}. Hence we can always run a minimal model program with scaling, if flips exist. 
\end{rem}
For the klt case, termination of the minimal model program with scaling is stated in the following Theorem, see \cite[Corollary 1.3.3]{BCHM10} and \cite[Theorem 3.9]{Ara08}.
\begin{thm}[MMP with scaling for klt pairs]\label{thm:kltmmp}
Let $(X,\Delta)$ be a $\bQ$-factorial klt pair such that $K_X+\Delta$ is not pseudo-effective. Let $H$ be an effective ample $\bR$-divisor such that $K_X+\Delta+ H$ is nef and klt. Then any minimal model program with scaling of $H$ terminates with a Mori Fiber space. \qed
\end{thm}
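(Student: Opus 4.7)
The plan is to deduce the result from two core theorems of \cite{BCHM10}, namely the existence of flips for $\bQ$-factorial klt pairs and the finiteness of log terminal models as the boundary varies in a rational polytope of klt log pairs. First I would verify that each step of the MMP with scaling can actually be performed. Given the current pair $(X_i,\Delta_i)$ and the scaling constant $s_i$ determined by the infimum formula of Remark \ref{rem:inf}, the cone theorem for $\bR$-boundaries on $\bQ$-factorial dlt pairs (cf.\ \cite[Lemma 3.1]{Bir10}) provides a $(K_{X_i}+\Delta_i)$-negative extremal ray in the hyperplane $(K_{X_i}+\Delta_i+s_iH_i)^\perp$; existence of the associated extremal contraction or flip, supplied by \cite{BCHM10}, then yields $\varphi_{i+1}$.

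The main obstacle is termination. Arguing by contradiction, I would assume the MMP produces an infinite sequence, so the monotone bounded sequence $s_0\geq s_1\geq\dots\geq 0$ converges to some $s_\infty\geq 0$. The crucial observation, following \cite[\S5]{BCHM10} and elaborated for this exact setup in \cite[Theorem 3.9]{Ara08}, is that for each index $i$ and each $s\in[s_i,s_{i-1}]$ the model $X_i$ is a log terminal model of the pair $(X,\Delta+sH)$: by construction $K_{X_i}+\Delta_i+sH_i$ remains nef throughout this interval, while $s\mapsto \Delta+sH$ traces out a line segment inside a rational polytope of klt boundaries. The finiteness of log terminal models theorem \cite[Corollary 1.1.5]{BCHM10}, applied to this polytope, leaves only finitely many possibilities for $X_i$; since consecutive models of an MMP are genuinely distinct (the Picard number drops under divisorial contractions, and the nef cone strictly changes under flips), this contradicts the infinitude of the sequence and forces termination.

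Finally, I would rule out the possibility that the process ends with a minimal model rather than a Mori fiber space. Pseudo-effectiveness of $K+\Delta$ is preserved under each step of an MMP: a flip is an isomorphism in codimension one, and a divisorial contraction pushes a pseudo-effective divisor forward to a pseudo-effective divisor. Consequently, if the terminal stage had $K_{X_n}+\Delta_n$ nef, then pulling back along the birational maps $\varphi_i^{-1}$ would make $K_X+\Delta$ pseudo-effective, contradicting the hypothesis. Hence the final stage must admit a further $(K_{X_n}+\Delta_n)$-negative extremal ray whose contraction has positive-dimensional fibres, which by definition is a Mori fiber space.
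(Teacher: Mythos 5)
The paper does not prove this theorem; it states it with a \verb|\qed| and refers the reader to \cite[Corollary 1.3.3]{BCHM10} and \cite[Theorem 3.9]{Ara08}. Your proposal reconstructs the underlying argument from the BCHM10 machinery, and the reconstruction is essentially correct and is indeed how the cited references proceed: existence of flips plus \cite[Lemma 3.1]{Bir10} to run the scaling MMP, finiteness of log terminal models over a rational polytope of klt boundaries to force termination, and preservation of non-pseudo-effectivity of $K+\Delta$ to exclude a minimal model as the endpoint.

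One point to tighten in your termination step: the assertion that ``consecutive models of an MMP are genuinely distinct'' and therefore finiteness of models yields a contradiction needs a bit more care than invoking Picard numbers and nef cones. Infinitely many flips could \emph{a priori} all occur at a single scaling value $s$, and the finiteness statement is about models up to isomorphism over the polytope, so one must rule out the possibility that the sequence returns to an isomorphic model. In \cite{BCHM10} this is handled by showing that the rational polytope $\{\Delta+sH : s\in[0,s_0]\}$ decomposes into finitely many chambers on which the ample model is constant, and that the scaling constants $s_i$ can only cross finitely many chamber walls; moreover, within a chamber the MMP steps are steps of an MMP for the corresponding ample model, whose termination is part of the same circle of results. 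Your sketch captures the right ingredients, but this chamber/polytope structure (rather than a bare ``consecutive models are distinct'') is what actually closes the argument. Also, in your last paragraph the phrase ``pulling back along $\varphi_i^{-1}$'' is slightly off for divisorial contractions, where $\varphi_i$ is a morphism and the correct statement is $K_{X_{i-1}}+\Delta_{i-1} = \varphi_i^*(K_{X_i}+\Delta_i) + aE$ with $a>0$, so pseudo-effectivity propagates backward; but the substance of that step is right.
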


\subsection{The minimal model program with scaling for dlt pairs}\label{sec:dlt}
In Theorem \ref{thm:dltmmp} we will show that Theorem \ref{thm:kltmmp} still holds for dlt pairs.
The proof uses that dlt pairs can be seen as the limit of klt pairs.
\subsubsection{dlt is the limit of klt}
The proof of the following Proposition \ref{prop:kmdlt} which is given in \cite{KM98} for $\bQ$-divisors does not directly apply to $\bR$-divisors. For that reason and for lack of an adequate reference for $\bR$-divisors, we provide short proofs of the results discussed in this section. A generalization of the following proposition for $\bR$-divisors is then given in Proposition \ref{prop:kltdlt}.

\begin{prop}[{\cite[Proposition 2.43]{KM98}}]\label{prop:kmdlt}
Assume that $(X,\Delta)$ is dlt ($\Delta$ a $\bQ$-divisor) and $X$ is quasi projective with ample divisor $H$. Let $\Delta_1$ be an effective $\bQ$-divisor (not necessarily $\bQ$-Cartier) such that $\Delta-\Delta_1$ is effective. Then there exists a rational number $c>0$ and an effective $\bQ$-divisor $D\sim_\bQ \Delta_1 +cH$ such that $(X,\Delta-\epsilon\Delta_1+\varepsilon D)$ is dlt for all rational numbers $0<\varepsilon\ll 1$. 

If $\Supp\Delta_1 = \Supp\Delta$, then $(X,\Delta-\varepsilon\Delta_1+\varepsilon D)$ is klt for all sufficiently small rational numbers $\varepsilon>0$.\qed
\end{prop}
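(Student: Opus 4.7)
The plan is to produce $D$ as a general member of a suitable linear system on $X$, work with a single log resolution for every small $\varepsilon$, and read off dltness (and the klt refinement) from how the log discrepancies depend on $\varepsilon$.

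First I would fix a log resolution $f\colon Y\to X$ of $(X,\Delta)$ that is an isomorphism over the snc locus of the pair, and label its exceptional prime divisors $E_1,\dots,E_r$. By the dlt hypothesis, the discrepancies $a_i:=a(E_i,X,\Delta)$ all satisfy $a_i>-1$. Next, I would choose a rational $c>0$ and an integer $m\gg 0$ so that $m\Delta_1$ and $mcH$ are integral Weil divisors, $mcH$ is very ample, and the reflexive sheaf $\sO_X(m\Delta_1+mcH)$ is generated by global sections at the generic point of every component of $\Supp\Delta$. By a Bertini-type argument I would pick a general $D'\in|m\Delta_1+mcH|$ whose support is disjoint from every component of $\Supp\Delta$ and such that $f$ is simultaneously a log resolution of $(X,\Delta+D')$. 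Setting $D:=\tfrac{1}{m}D'$ yields an effective divisor with $D\sim_\bQ\Delta_1+cH$ and with all coefficients bounded by $\tfrac{1}{m}$.

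With these choices, $f$ remains a log resolution of $(X,\Delta-\varepsilon\Delta_1+\varepsilon D)$ for every $\varepsilon\in[0,1]$, and on $Y$ the exceptional log discrepancies evolve affinely in $\varepsilon$:
\[
a(E_i,X,\Delta-\varepsilon\Delta_1+\varepsilon D)=a_i+\varepsilon\bigl(\mathrm{ord}_{E_i}(f^*\Delta_1)-\mathrm{ord}_{E_i}(f^*D)\bigr).
\]
Since each $a_i>-1$ strictly, the right-hand side stays $>-1$ for all $0<\varepsilon\ll 1$. Among non-exceptional prime components, a component $P\subset\Supp\Delta$ has its coefficient in $\Delta$ decreased by $\varepsilon$ times its (nonnegative) coefficient in $\Delta_1$, so it remains in $[0,1]$; the new components coming from $\Supp D$ are disjoint from $\Supp\Delta$ and enter with coefficient $\leq\varepsilon/m<1$. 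This proves dltness. Under the hypothesis $\Supp\Delta_1=\Supp\Delta$, every $P\subset\Supp\Delta$ has strictly positive coefficient in $\Delta_1$, so its coefficient in the perturbed pair drops strictly below $1$, giving klt.

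The main technical obstacle is the Bertini step. Because $\Delta_1$ is not assumed $\bQ$-Cartier, the expression $|m\Delta_1+mcH|$ must be interpreted through global sections of the reflexive sheaf $\sO_X(m\Delta_1+mcH)$ rather than through an honest linear system of Cartier divisors. The requirement that a general member avoid every component of $\Supp\Delta$ is not automatic from very ampleness of $mcH$ alone; one would secure it by taking $c$ large enough that this reflexive sheaf is generated by global sections away from a codimension-$\geq 2$ subset, and applying a classical Bertini argument to the base-point-free pullback on the smooth variety $Y$, absorbing any remaining pathology over the singular locus of $X$ by a further blow-up of $f$.
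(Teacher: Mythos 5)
The paper does not prove this proposition; it simply cites \cite[Proposition~2.43]{KM98} and ends the statement with \qed, so there is no in-paper argument to compare against. Your reconstruction is the standard perturbation argument, and the overall structure (fix one log resolution that is an isomorphism over the snc locus, replace $\Delta_1$ by a general member of $|m\Delta_1+mcH|$, read off dltness from the affine dependence of discrepancies on $\varepsilon$) is sound. Three points are worth tightening. First, since neither $\Delta_1$ nor $D$ is assumed $\bQ$-Cartier, the terms $\mathrm{ord}_{E_i}(f^*\Delta_1)$ and $\mathrm{ord}_{E_i}(f^*D)$ are not individually defined; the well-defined quantity is $\mathrm{ord}_{E_i}\bigl(f^*(D-\Delta_1)\bigr)$, as $D-\Delta_1\sim_\bQ cH$ is $\bQ$-Cartier, and your discrepancy formula should be read as $a_i-\varepsilon\,\mathrm{ord}_{E_i}\bigl(f^*(D-\Delta_1)\bigr)$. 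Second, ``support disjoint from every component of $\Supp\Delta$'' should be ``having no component in common with $\Supp\Delta$''; literal disjointness is generally impossible in dimension $\geq 2$, and the weaker statement (secured by global generation at the generic points of the components of $\Supp\Delta$) is exactly what the coefficient bookkeeping needs. Third, your fallback ``absorbing any remaining pathology \dots by a further blow-up of $f$'' is the one place a gap could creep in: replacing $f$ by a larger log resolution $f'$ introduces new $f'$-exceptional divisors whose discrepancies for the perturbed pair you have not controlled. This is avoidable: once $\sO_X(m\Delta_1+mcH)$ is globally generated, pull it back to the smooth $Y$, apply Bertini there to get $\tilde D'$ snc relative to $\Ex(f)\cup f^{-1}_*\Delta$, and set $D':=f_*\tilde D'$; then the \emph{original} $f$ is already a log resolution of $(X,\Delta+D')$ and Szab{\'o}'s criterion applies without any further blow-up.
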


\begin{lem}[See {\cite[Example 9.2.29]{Laz2}}]   \label{lem:ample}
Let $(X,\Delta)$ be a klt pair and $H$ an ample $\bR$-divisor. Then $H$ is $\bR$-linearly equivalent to an effective divisor $H'$ such that $(X,\Delta+H')$ is klt.
\end{lem}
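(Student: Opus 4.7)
The plan is to approximate $H$ by a positive $\bR$-linear combination of effective divisors drawn from very ample integer linear systems, chosen generically so that Bertini and the openness of the klt condition do the rest of the work.

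First, I would reduce the statement to a problem about ample $\bZ$-divisors. Since the ample cone in $N^1(X)_\bR$ is an open convex subset of a finite-dimensional real vector space, and rational (hence, after clearing denominators, integral) classes are dense, one may write
\[
H \;\equiv\; r_1 A_1 + \dots + r_N A_N
\]
in $N^1(X)_\bR$, where the $r_i$ are positive real numbers and the $A_i$ are ample $\bZ$-Cartier divisors. Concretely, one fixes an ample integer divisor $A$, notes that $H - \varepsilon A$ is still ample for small $\varepsilon > 0$, decomposes $H-\varepsilon A$ in a basis of $N^1(X)_\bR$ consisting of ample integer divisors (which exists because the ample cone is open), and absorbs $\varepsilon A$ suitably so all coefficients become positive. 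This step is a standard fact about open convex cones and needs no new ideas.

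Next, for each $i$ choose a positive integer $m_i$ large enough that $m_i A_i$ is very ample, and set $c_i := r_i / m_i$. By picking a \emph{general} section $D_i \in |m_i A_i|$ and defining
\[
H' \;:=\; c_1 D_1 + \dots + c_N D_N,
\]
one obtains an effective $\bR$-divisor with $H' \sim_\bR H$. It remains to verify that $(X,\Delta+H')$ is klt provided the $m_i$ are sufficiently large. Fix a log resolution $\pi:\tilde X \to X$ of $(X,\Delta)$, and write $K_{\tilde X} = \pi^*(K_X+\Delta) + \sum a_j E_j$ with $a_j > -1$ for every prime divisor $E_j$ on $\tilde X$ (including the strict transform of components of $\Delta$). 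Since $(X,\Delta)$ is klt, there is a uniform $\varepsilon_0>0$ with $a_j > -1+\varepsilon_0$. By Bertini on a common log resolution of $(X,\Delta)$, general $D_i$ pull back to reduced smooth divisors meeting $\sum E_j$ transversally, and one has control on $\mathrm{mult}_{E_j}\pi^*D_i \leq M_i$ for an explicit constant $M_i$ depending on $m_i A_i$. Choosing the $m_i$ large so that $\sum c_i M_i < \varepsilon_0$, the discrepancies of $(X,\Delta+H')$ remain $>-1$, hence the pair is klt.

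The main obstacle is the last step: making the quantitative openness argument precise. One has to ensure that both (a) the generic member of $|m_i A_i|$ does not acquire components contained in any $\pi$-exceptional divisor or in the support of $\Delta$, and (b) the multiplicities along the $E_j$ can be bounded uniformly after passing to a resolution simultaneously dominating all the $D_i$. Both issues are handled by Bertini together with the fact that a single log resolution of $(X,\Delta)$ can be refined to resolve $(X,\Delta+\sum D_i)$ for generic $D_i$ without changing the discrepancies of $(X,\Delta)$ themselves. Once this bookkeeping is in place, the estimate $\sum c_i M_i < \varepsilon_0$ closes the proof; note that the argument is exactly the one used for $\bQ$-divisors in Proposition~\ref{prop:kmdlt}, but applied coefficient-by-coefficient to the real decomposition $H = \sum r_i A_i$.
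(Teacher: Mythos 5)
Your overall strategy---decomposing $H$ as a positive real combination of ample integral divisors, replacing each summand by a general member of a very ample multiple, and checking discrepancies on a fixed log resolution---is essentially the one the paper uses. The paper first handles the $\bQ$-divisor case by citing \cite[Lemma 5.17]{KM98}, then reduces a general ample $\bR$-divisor to a single summand $\lambda A$ and rescales by $\frac{\lambda}{l}$ with $l > \lambda$ rational; your simultaneous treatment of all summands is an equally valid packaging of the same idea.

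The one place where your write-up is not sound as stated is the quantitative estimate $\sum c_i M_i < \varepsilon_0$ with $M_i$ ``depending on $m_iA_i$''. Read as a bound valid for arbitrary members of $|m_iA_i|$, $M_i$ would grow at least linearly in $m_i$, so $c_iM_i = (r_i/m_i)M_i$ need not become small and choosing $m_i$ large would not close the argument. What actually saves it is that for the \emph{general} $D_i$ you pick, $\mathrm{mult}_{E_j}\pi^*D_i = 0$ for every $\pi$-exceptional $E_j$ and for every strict transform of a component of $\Delta$: a general member of the very ample system avoids the finitely many non-divisorial centres of exceptional divisors and shares no component with $\Delta$, so $\pi^*D_i$ is just the strict transform $\pi^{-1}_*D_i$ and the old discrepancies $a_j$ are unchanged. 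The only new constraint comes from the fresh components $\pi^{-1}_*D_i$, which enter the log-resolved boundary with coefficient $c_i = r_i/m_i$; one simply needs $c_i < 1$, i.e.\ $m_i > r_i$ (in addition to $m_iA_i$ being very ample). With that correction, and Bertini to ensure $\pi$ remains a log resolution of $(X,\Delta+H')$, your proof is complete and matches the paper's.
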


\begin{proof}
  We first consider an ample $\bQ$-divisor $H$. Then for all sufficiently
  divisible $m\gg 0$, the divisor $mH$ is a very ample integral
  Cartier-divisor.  Let $\tilde H$ be a general member of $\left|mH\right|$,
  and set $H':=\frac{1}{m}\tilde H$. Since $m$ is chosen large, we have $\lrd
  \Delta+H'\rrd\leq 0$.  Moreover, it follows from \cite[Lemma 5.17]{KM98}
  that the discrepancy of $(X,\Delta+H')$ is still greater than $-1$.  This
  proves that $(X,\Delta+H')$ is klt.

  Since any ample $\bR$-divisor can be written as a positive linear
  combination of ample $\bQ$-divisors, it suffices without loss of generality
  to prove the assertion for an ample $\bR$-divisor of type $\lambda A$, where
  $\lambda\in\bR^+$ and $A$ is an ample $\bQ$-divisor. Choose a rational
  $l>\lambda$. As we have seen, there exists an ample $\bQ$-divisor $A'\sim
  lA$ such that $(X,\Delta + A')$ is klt. Clearly, $\frac{\lambda}{l}<1$ and
  $\frac{\lambda}{l}A'\sim \lambda A$. Therefore $\frac{\lambda}{l}A'$ has the
  required properties.
\end{proof}

\begin{prop}[Generalization of Proposition \ref{prop:kmdlt} for $\bR$-divisors] \label{prop:kltdlt}
Let $(X, \Delta)$ be a dlt pair and $H$ be an ample $\bR$-divisor. Then for any 
$\varepsilon>0$ there exists an effective $\bR$-divisor $\Delta_\varepsilon\sim_\bR \Delta+\varepsilon H$ such that the pair $(X,\Delta_\varepsilon)$ is klt.
\end{prop}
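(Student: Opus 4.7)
The plan is to reduce the statement to Lemma~\ref{lem:ample}, which already handles an analogous claim in the klt setting. The obstacle is that $(X,\Delta)$ is merely dlt, so $\Delta$ may have prime components of coefficient~$1$ and cannot be fed directly into that lemma. I would therefore first shift these coefficients slightly below~$1$ to manufacture a klt companion of $(X, \Delta)$, and then fold the correction into the ample class $\varepsilon H$.

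Concretely, I will write $\Delta = \sum a_i D_i$ with $a_i \in [0,1]$, set $I := \{\, i : a_i = 1\,\}$, and put $\Delta^\eta := \Delta - \eta \sum_{i \in I} D_i$ for a small $\eta > 0$ to be chosen. I claim $(X, \Delta^\eta)$ is klt: on the open snc locus $U \subset X$ of $(X,\Delta)$ the restriction $\Delta^\eta|_U$ is still snc but with all coefficients strictly below~$1$, while for any exceptional divisor $E$ with center outside $U$ the discrepancy $a(E, X, \Delta^\eta)$ is at least $a(E, X, \Delta) > -1$ because we only subtracted an effective divisor. Since furthermore $\lrd \Delta^\eta \rrd = 0$, the pair is indeed klt.

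It then suffices to exhibit an effective $\bR$-divisor $H' \sim_\bR \eta \sum_{i \in I} D_i + \varepsilon H$ for which $(X, \Delta^\eta + H')$ is klt, for then $\Delta_\varepsilon := \Delta^\eta + H'$ is effective and $\bR$-linearly equivalent to $\Delta + \varepsilon H$. The key observation is that the class $\eta \sum_{i \in I} D_i + \varepsilon H \in N^1(X)_\bR$ converges to the ample class $\varepsilon H$ as $\eta \to 0$, so by openness of the ample cone it is itself $\bR$-ample once $\eta$ is chosen sufficiently small. Lemma~\ref{lem:ample} applied to the klt pair $(X, \Delta^\eta)$ and this ample $\bR$-divisor then yields the desired $H'$, finishing the argument. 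The only point that really requires attention is the upgrade from dlt to klt in the middle step, but once one unwinds the definition of dlt and invokes monotonicity of discrepancies with respect to the boundary, it is routine.
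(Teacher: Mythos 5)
Your reduction is appealing, and the structural idea — shave the coefficient-one part of $\Delta$ down by $\eta$, absorb the deficit into the ample class, and hand the result to Lemma~\ref{lem:ample} — is a genuinely different and shorter route than the paper's (which passes through a rational polytope in $\WDiv_\bR(X)$ to reduce to the $\bQ$-divisor case and then invokes \cite[Proposition 2.43]{KM98}). Unfortunately there is a real gap: the argument silently assumes that $\lrd\Delta\rrd=\sum_{i\in I}D_i$ is $\bR$-Cartier, and this is not part of the hypotheses. The paper's standing convention is that only $K_X+\Delta$ is $\bR$-Cartier and that $X$ is \emph{not} assumed $\bQ$-factorial; the proposition is applied in exactly this generality, e.g.\ in Corollary~\ref{cor:factorial} and Corollary~\ref{prop:mds}.

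Without $\lrd\Delta\rrd$ being $\bR$-Cartier, two steps of your proof break down. First, $K_X+\Delta^\eta = (K_X+\Delta) - \eta\lrd\Delta\rrd$ need not be $\bR$-Cartier, so discrepancies $a(E,X,\Delta^\eta)$ are undefined and it is meaningless to call $(X,\Delta^\eta)$ klt; the monotonicity-of-discrepancies argument has nothing to act on. Second, the divisor $\eta\lrd\Delta\rrd+\varepsilon H$ need not define a class in $N^1(X)_\bR$ at all, so there is no ``openness of the ample cone'' argument available, and Lemma~\ref{lem:ample} (which requires an ample $\bR$-Cartier divisor) cannot be applied. This is precisely the difficulty that \cite[Proposition 2.43]{KM98} — Proposition~\ref{prop:kmdlt} above — is engineered to handle: it perturbs by a not-necessarily-$\bQ$-Cartier $\Delta_1$ and returns an $\bR$-linearly (not merely numerically) equivalent boundary, which is why the paper routes through it. Your argument is correct as written under the additional hypothesis that $X$ is $\bQ$-factorial, but it does not prove the proposition in the generality in which it is stated and used.
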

\begin{proof}

After rescaling of $H$ we can assume without loss of generality that $\varepsilon=1$. 
We first assume that $\Delta$ is a $\bQ$-divisor.
Since $H$ is not necessarily a $\bQ$-divisor, we write $H=H_1+H_2$ such that $H_1$ is an ample $\bQ$-divisor and $H_2$ is an ample $\bR$-divisor.

 There exists an $m\in\bN$ such that $mH_1$ is integral and Cartier, thus we may apply Proposition \ref{prop:kmdlt} for $\Delta_1=\Delta$ and $mH_1$. Accordingly there exists a rational number $c>0$ and an effective $\bQ$-divisor $D\sim_\bQ\Delta+ cmH_1$ such that for any sufficiently small $\varepsilon'>0$ the pair $(X,\Delta-\varepsilon'\Delta+\varepsilon' D)$ is klt.
In particular, $\Delta+\varepsilon'cm H_1$ is $\bR$-linearly equivalent to an effective $\bR$-divisor $\Delta_{H_1}$ such that $(X,\Delta_{H_1})$ is klt. 
By Lemma \ref{lem:ample}, we can replace  $\varepsilon' m c H_2$ by an $\bR$-linear equivalent effective divisor $H_3$ such that $(X,\Delta_{H_1}+H_3)$ is klt.
Note that 
\[\Delta_{H_1}+H_3 \sim_\bR \Delta+\varepsilon'mcH,
\]
thus another application of Lemma \ref{lem:ample}
 for $(1-\varepsilon' mc)H$ yields that $\Delta+H$ is $\bR$-linearly equivalent to an effective $\bR$-divisor $\Delta_H$ such that $(X,\Delta_H)$ is klt. This proves the claim if $\Delta$ is a $\bQ$-divisor.

Now we consider the general case where $\Delta$ is not necessarily a $\bQ$-divisor.
Because of the first part of the proof it suffices to find an effective $\bQ$-divisor $\Delta'$ such that
\begin{itemize}
  \item $(X,\Delta')$ is dlt,
  \item $H+\Delta-\Delta'$ is $\bR$-ample.
\end{itemize}
To prove the existence of $\Delta'$, we first write $\Delta$ as a positive linear combination 
\[
\Delta =\sum\limits_{i=1}^n r_iS_i,
\]
where $S_i$ are distinct prime Weil divisors and $r_i\in [0,1]$, for $i=1,\dots,n$. Consider $K_X\in\WDiv(X)$ as a fixed divisor which represents the canonical class and set 
\[
Q:=\left\{K_X+\sum\lambda_i S_i\,|\,\lambda_i\in [0,1]\right\}\subset\WDiv_\bR(X).
\]
Note that $Q$ is a rational polytope in $\WDiv_\bR(X)$ and consequently, the intersection $B:=Q\cap \Div_\bR(X)$ is a rational polytope as well. Moreover, $B$ is not empty because $K_X+\Delta\in B$.
Note that the property \emph{dlt} is an open property on $B$. More precisely, there is an open neighborhood $U\subset B$ of $K_X+\Delta$ such that the pair $(X,\Gamma)$ is dlt for any $K_X+\Gamma\in U$. Since ampleness is also an open property, we can assume that for any $K_X+\Gamma\in U$ the divisor $H+\Delta-\Gamma$ is ample. 

Since $B$ is a rational polytope, the set $B_\bQ:=Q\cap \Div_\bQ(X)$ is dense in $B$. Therefore, there exists $K_X+\Delta'\in U$ with $\Delta'$ being a $\bQ$-divisor. This finishes the proof.
\end{proof} 

\subsubsection{Termination of the minimal model program}
The following lemma shows that a variation of the boundary divisor $\Delta$ does not affect flips.
\begin{lem}[Rigidity of flips]\label{lem:flip}
Let $(X,\Delta)$ be a $\bQ$-factorial dlt pair. Assume that $R$ is a $(K_X+\Delta)$-negative extremal ray, and  that the contraction $f$ of $R$ is small. Let $D$ be an arbitrary $\bR$-divisor on $X$ such that $R$ is $(K_X+D)$-negative. 
If the $(X,\Delta)$-flip $\varphi$ of $f$ exists, then $\varphi$ is also the $(X,D)$-flip of $f$.
\end{lem}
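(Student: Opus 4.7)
The plan is to verify that the $(X,\Delta)$-flip $\varphi\colon X\rto X^+$ also satisfies the defining property of the $(X,D)$-flip, by computing the intersection number of $K_{X^+}+D^+$ with a curve contracted by the flipped map and showing that it is positive. Let $f\colon X\to Z$ denote the small contraction of $R$, and let $f^+\colon X^+\to Z$ be the resulting small map on the flipped side. Since $R$ is extremal we have $\rho(X/Z)=1$; and because $\varphi$ is small, the pushforward $\varphi_*\colon N^1(X)\to N^1(X^+)$ is an isomorphism carrying $f^*N^1(Z)$ onto $(f^+)^*N^1(Z)$, so $\rho(X^+/Z)=1$ as well. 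I would then fix curves $C\subset X$ generating $R$ and $C^+\subset X^+$ contracted by $f^+$.

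The core of the argument consists in expressing everything in terms of the numerical pullback of Definition~\ref{def:numerical}. First I would show that $\varphi^*[C^+]$ is a scalar multiple of $[C]$. Since $\varphi$ is an isomorphism in codimension one, the identity $\varphi_*(f^*M)=(f^+)^*M$ holds for every $M\in N^1(Z)$, so the projection formula yields
\[
\varphi^*[C^+]\cdot f^*M \;=\; [C^+]\cdot\varphi_*(f^*M) \;=\; [C^+]\cdot(f^+)^*M \;=\; f^+_*[C^+]\cdot M \;=\; 0.
\]
Hence $\varphi^*[C^+]$ lies in the annihilator of $f^*N^1(Z)$ inside $N_1(X)$, which by $\rho(X/Z)=1$ is one-dimensional and generated by $[C]$; so $\varphi^*[C^+]=a\,[C]$ for some $a\in\bR$. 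The sign of $a$ is pinned down by the $(X,\Delta)$-flip condition that $K_{X^+}+\Delta^+$ is $f^+$-ample: a second application of the projection formula gives
\[
0 \;<\; (K_{X^+}+\Delta^+)\cdot C^+ \;=\; (K_X+\Delta)\cdot \varphi^*[C^+] \;=\; a\,(K_X+\Delta)\cdot C,
\]
and since $(K_X+\Delta)\cdot C<0$ by hypothesis, this forces $a<0$.

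Running the same projection formula with $D$ in place of $\Delta$ then yields $(K_{X^+}+D^+)\cdot C^+=a\,(K_X+D)\cdot C$, a product of two negative numbers and hence strictly positive. Combined with $\rho(X^+/Z)=1$, this shows that $K_{X^+}+D^+$ is $f^+$-ample; together with the smallness of $\varphi$ and the $\bQ$-factoriality of $X^+$ (both inherited from the $(X,\Delta)$-flip), this is precisely the defining property of the $(X,D)$-flip, so the two flips coincide. The one point that I expect to need genuine care is the identity $\varphi_*\circ f^*=(f^+)^*$ used to locate $\varphi^*[C^+]$ inside the annihilator of $f^*N^1(Z)$; this is standard for small birational modifications, but ought to be spelled out, whereas the remainder of the argument is direct bookkeeping with the projection formulae already recorded in the paper.
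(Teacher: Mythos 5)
Your proof is correct and follows the same line as the paper's: reduce the ampleness of $K_{X^+}+D^+$ over $Z$ to a single intersection number via $\rho(X^+/Z)=1$ and the projection formula. The only substantive difference is that the paper cites \cite[Lemma 4.13]{Bar08} for the key fact $-\varphi^*[C^+]\in R$, whereas you derive it directly by showing $\varphi^*[C^+]$ annihilates $f^*N^1(Z)$ and pinning down the sign via the $(X,\Delta)$-flip condition — a clean, self-contained substitute for the citation.
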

\begin{proof}
Assume that any flip 
\[
\begin{xy}
\xymatrix{
X\ar@{ -->}[rr]^\varphi \ar[rd]_f && X^+ \ar[ld]^{f^+}\\
&Y& 
}
\end{xy}
\]
of $f$ exists. We have to show that $K_{X^+}+\varphi_*D$ is $f^+$-ample. Let $C^+\subset X^+$ be a curve which is contracted by $f^+$. Then it is shown in \cite[Lemma 4.13]{Bar08} that for the numerical pullback the following holds:
\[
-\varphi^*[C^+]\in R.
\]
Since $\varphi^*:N_1(X^+)\to N_1(X)$ is an isomorphism of vector spaces, the relative Picard number $\rho(X^+/Y)$ is one, and it suffices to show that $K_{X^+}+\varphi_*D$ intersects $C^+$ positively. This follows easily from the projection formula, thus $\varphi$ is a flip for both $(X,\Delta)$ and $(X,D)$.
\end{proof}
\begin{cor}\label{cor:dltmmp}
Let $(X,\Delta)$ be a $\bQ$-factorial dlt pair. Then any minimal model program (with scaling) can be run for $(X,\Delta)$.
\end{cor}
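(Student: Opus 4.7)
The plan is to reduce the existence of each step of the minimal model program for the dlt pair $(X,\Delta)$ to the corresponding step for a nearby klt pair, then to invoke rigidity of flips to transfer the conclusion back. At each stage we have a $\bQ$-factorial dlt pair $(X_i,\Delta_i)$ and we need three things: existence of a suitable $(K_{X_i}+\Delta_i)$-negative extremal ray $R_i$ (satisfying the scaling constraint, if we are running the MMP with scaling), existence of its contraction, and, in the case where this contraction is small, existence of the corresponding flip.

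The first two ingredients are already available. For the ordinary MMP, Proposition~\ref{prop:kltdlt} lets one approximate dlt pairs with $\bR$-divisor boundary by klt pairs, which is enough to extend the Cone and Contraction Theorems to this setting. For the MMP with scaling, the existence of an extremal ray lying in the scaled supporting hyperplane is exactly \cite[Lemma 3.1]{Bir10}. The contraction itself is then provided by the Contraction Theorem, which is insensitive to whether the pair is klt or merely dlt.

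The substantive point is therefore the existence of flips, and this is where I expect the main work to lie. Given a small $(K_{X_i}+\Delta_i)$-negative extremal ray $R_i$, I would fix an ample $\bR$-divisor $H$ on $X_i$ and, using Proposition~\ref{prop:kltdlt}, replace $\Delta_i$ by an $\bR$-linearly equivalent effective divisor $\Delta_i^\varepsilon\sim_\bR\Delta_i+\varepsilon H$ making $(X_i,\Delta_i^\varepsilon)$ klt. Since $H\cdot R_i>0$ while $(K_{X_i}+\Delta_i)\cdot R_i<0$ strictly, the class $K_{X_i}+\Delta_i^\varepsilon$ still pairs negatively with $R_i$ provided $\varepsilon$ is chosen small enough. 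Existence of the $(X_i,\Delta_i^\varepsilon)$-flip is then part of the main theorem of \cite{BCHM10} (and is implicit in Theorem~\ref{thm:kltmmp}); by Lemma~\ref{lem:flip} the very same birational map is the flip of $(X_i,\Delta_i)$ along $R_i$. The procedure iterates, since divisorial contractions and flips of $\bQ$-factorial dlt pairs remain $\bQ$-factorial and dlt; the only subtlety is that the perturbation size $\varepsilon$ must be chosen anew at each step, but no uniformity is required because it only serves to produce a single flip at a time. No termination statement is claimed here.
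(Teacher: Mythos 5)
Your proposal follows essentially the same route as the paper: existence of klt flips from \cite{BCHM10}, the rigidity Lemma~\ref{lem:flip} together with the perturbation Proposition~\ref{prop:kltdlt} to transfer them to the dlt case, and Birkar's lemma for the existence of the extremal ray in the scaled supporting hyperplane. The extra details you supply (choosing $\varepsilon$ small enough so that $R_i$ remains $(K_{X_i}+\Delta_i^\varepsilon)$-negative, and the remark that $\varepsilon$ need not be uniform across steps) are correct and make explicit what the paper's terse proof leaves implicit.
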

\begin{proof}
Since flips exist for klt pairs, see \cite[Corollary 1.4.1]{BCHM10} , Lemma \ref{lem:flip} and Proposition \ref{prop:kltdlt} imply the existence of flips for dlt pairs. This implies the assertion for arbitrary minimal model programs. It remains to show that for each step of a minimal model program with scaling there exists an extremal ray which can be contracted. This is shown in \cite[Lemma 3.10]{Bir10}.
\end{proof}
We are now able to generalize Theorem~\ref{thm:kltmmp}.
\begin{thm}[MMP with scaling for dlt pairs]\label{thm:dltmmp}
Let $(X,\Delta)$ be a $\bQ$-factorial dlt pair, and $H$ an ample $\bR$-divisor such that $K_X+\Delta+ H$ is nef. Assume that $K_X+\Delta$ is not pseudo-effective.
\begin{enumerate}
\item \label{item:diffmmp} Set $\sigma:=\inf\{s>0\,|\, K_X+\Delta+sH \text{ is pseudo-effective}\}$,
and let $0\leq\varepsilon_1,\varepsilon_2<\sigma$ be arbitrary real numbers. For $k\in\{1,2\}$, let $\Delta^k:=\Delta_{\varepsilon_k}$ be as in Proposition \ref{prop:kltdlt}, if $\varepsilon_k$ is positive, or set $\Delta^k:=\Delta$, if ${\varepsilon_k}=0$.

If $(\varphi_i, s_i)_i$ is a minimal model program with scaling for the pair $(X,\Delta^1)$, then $(\varphi_i, s_i+(\varepsilon_1-\varepsilon_2))_i$ is a minimal model program with scaling for the pair $(X,\Delta^2)$.
\item \label{item:terminates} Any minimal model program with scaling of $H$ can be run for the pair $(X,\Delta)$ and terminates.
\end{enumerate}
\end{thm}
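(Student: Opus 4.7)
The plan is to reduce termination of the dlt MMP with scaling to the klt case (Theorem~\ref{thm:kltmmp}), using Proposition~\ref{prop:kltdlt} to approximate a dlt pair by a klt one via an arbitrarily small ample perturbation of the boundary.

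For part~(\ref{item:diffmmp}), I would start from the observation that $\Delta^k \sim_\bR \Delta + \varepsilon_k H$ for $k = 1, 2$, so on every intermediate model $X_i$ one has
\[
K_{X_i} + \Delta^1_i + s H_i \;\sim_\bR\; K_{X_i} + \Delta^2_i + (s + \varepsilon_1 - \varepsilon_2) H_i.
\]
All the defining conditions of the MMP with scaling in Definition~\ref{def:smmp} (nefness of the scaled divisor and orthogonality of the contracted ray) are purely numerical, and therefore translate verbatim once one shifts the scaling sequence by $\varepsilon_1 - \varepsilon_2$. What still requires argument is that the same birational map $\varphi_i$ is a valid MMP step for $(X, \Delta^2)$: for a divisorial contraction this is automatic, as it depends only on the ray; for a flip, this is exactly the content of Lemma~\ref{lem:flip} (rigidity of flips), which applies provided the contracted ray $R_i$ remains $(K + \Delta^2)$-negative. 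This last negativity amounts to $s_{i-1} + \varepsilon_1 - \varepsilon_2 > 0$, which holds because $s_{i-1}$ is bounded below by the pseudo-effective threshold $\sigma - \varepsilon_1$ for $K_X + \Delta^1$ along the MMP, giving $s_{i-1} + \varepsilon_1 - \varepsilon_2 \geq \sigma - \varepsilon_2 > 0$.

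For part~(\ref{item:terminates}), existence of an MMP with scaling for $(X, \Delta)$ is already granted by Corollary~\ref{cor:dltmmp}, so only termination needs to be shown. I would fix $\varepsilon$ with $0 < \varepsilon < \sigma$ and use Proposition~\ref{prop:kltdlt} to obtain an effective klt boundary $\Delta^\varepsilon \sim_\bR \Delta + \varepsilon H$. Applying part~(\ref{item:diffmmp}) with $(\varepsilon_1, \varepsilon_2) = (0, \varepsilon)$ shows that the given dlt MMP $(\varphi_i, s_i)_i$ for $(X, \Delta)$ translates into the klt MMP $(\varphi_i, s_i - \varepsilon)_i$ for $(X, \Delta^\varepsilon)$. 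The klt pair meets the hypotheses of Theorem~\ref{thm:kltmmp}: the divisor $K + \Delta^\varepsilon$ is not pseudo-effective (since $\varepsilon < \sigma$), while $K + \Delta^\varepsilon + H$ is nef because it numerically equals the sum of the nef $K + \Delta + H$ and the ample $\varepsilon H$. After replacing $H$ by an $\bR$-linearly equivalent effective divisor making $(X, \Delta^\varepsilon + H)$ klt via Lemma~\ref{lem:ample}, Theorem~\ref{thm:kltmmp} applies and yields termination of the klt MMP, hence of the dlt one.

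The main obstacle I expect is the careful bookkeeping of pseudo-effective thresholds along the MMP to guarantee that the shifted scalings stay non-negative and the shifted rays remain $(K + \Delta^2)$-negative throughout; everything else is routine numerical translation combined with the flip-rigidity lemma.
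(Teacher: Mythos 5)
Your proposal follows the same overall strategy as the paper: observe the numerical identity $K_{X_i}+\Delta^1_i+s_iH_i \equiv K_{X_i}+\Delta^2_i+(s_i+\varepsilon_1-\varepsilon_2)H_i$, use Lemma~\ref{lem:flip} to see that the birational steps are insensitive to the boundary perturbation, and reduce termination (part~\ref{item:terminates}) to the klt Theorem~\ref{thm:kltmmp} by applying part~\ref{item:diffmmp} with $(\varepsilon_1,\varepsilon_2)=(0,\varepsilon)$. Part~\ref{item:terminates} is handled as in the paper.

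The one place where you diverge is the crux of part~\ref{item:diffmmp}: showing $s_{i-1}+(\varepsilon_1-\varepsilon_2)>0$ for \emph{all} $i$. You assert that $s_{i-1}$ stays bounded below by the pseudo-effective threshold $\sigma-\varepsilon_1$ because this threshold ``is preserved along the MMP.'' That claim is true but not free: for flips it follows from the isomorphism in codimension one, but for a divisorial contraction $\varphi_i$ one must check that for $s$ below the nef threshold the divisor $K_{X_{i-1}}+\Delta^1_{i-1}+sH_{i-1}$ equals $\varphi_i^*(K_{X_i}+\Delta^1_i+sH_i)$ plus a \emph{positive} multiple of the exceptional divisor, which is an extra (if routine) negativity-type computation you would need to supply. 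The paper deliberately avoids this: it uses your pseudo-effective-threshold bound only for the base case $i=0$ and then argues by induction. In the inductive step, if $s_{i+1}+(\varepsilon_1-\varepsilon_2)\le 0$ while $s_i+(\varepsilon_1-\varepsilon_2)>0$, then the two nef divisors $K_{X_{i+1}}+\Delta^2_{i+1}+(s_i+\varepsilon_1-\varepsilon_2)H_{i+1}$ and $K_{X_{i+1}}+\Delta^2_{i+1}+(s_{i+1}+\varepsilon_1-\varepsilon_2)H_{i+1}$ bracket $K_{X_{i+1}}+\Delta^2_{i+1}$, so the latter is nef by convexity; but then the klt MMP with scaling for $(X,\Delta^2)$ would terminate with a minimal model, contradicting Theorem~\ref{thm:kltmmp}. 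The paper also explicitly records a third point (compatibility of termination of the two sequences), which you omit; this is in fact forced by the positivity of the shifted scalings and so is not a real gap. In short, your route is correct and arguably more direct, but it needs the threshold-invariance step made explicit, whereas the paper's convexity argument is self-contained given Theorem~\ref{thm:kltmmp}.
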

\begin{proof}
It is shown in Corollary~\ref{cor:dltmmp} that the minimal model program with scaling can be run for dlt pairs. Item~(\ref{item:terminates}) is then a consequence of (\ref{item:diffmmp}) and Theorem~\ref{thm:kltmmp}. 

To show (\ref{item:diffmmp}), we first observe that for any $i$ the numerical equivalence 
\[
K_{X_i} +\Delta^1_i+s_i H_i \equiv K_{X_i}+\Delta^2_i +\left(s_i+(\varepsilon_1-\varepsilon_2)\right)H_i
\]
holds. In particular, the divisor $K_{X_i}+\Delta^2_i +\left(s_i+(\varepsilon_1-\varepsilon_2)\right)H_i$ is nef and numerically trivial on $R_{i+1}$. 
Moreover, if $R_{i+1}$ is $(K_{X_i}+\Delta^2_i)$-negative, then it follows from Lemma \ref{lem:flip} that a flip of $R_{i+1}$ does not depend on the numbers $\varepsilon_1,\varepsilon_2$.
It therefore remains to show that for any $i$ the following holds.
\switchenum{
\begin{enumerate}
\item \label{pf:positive} The number $s_i+(\varepsilon_1-\varepsilon_2)$ is positive,
  \item \label{pf:negative} the ray $R_{i+1}$ is $(K_{X_{i}}+\Delta^2_i)$-negative,
  \item \label{pf:terminates} if the first sequence terminates, then so does the second one.
\end{enumerate}
}
We first show that (\ref{pf:positive}) implies (\ref{pf:negative}), thus we assume that $s_i+(\varepsilon_1-\varepsilon_2)$ is positive for some $i$. Since $R_{i+1}$ is $(K_{X_i}+\Delta^1_i)$-negative and $(K_{X_i}+\Delta^1_i+s_iH_i)$-trivial, $R_{i+1}$ is $H_i$-positive. As we have seen before, $R_{i+1}$ is also $\left(K_{X_i}+\Delta^2_i +\left(s_i+\varepsilon_1-\varepsilon_2\right)H_i\right)$-trivial, and since $s_i+(\varepsilon_1-\varepsilon_2)$ is positive, we conclude (\ref{pf:negative}).

The next step is to show (\ref{pf:positive}) by induction on $i$. For $i=0$, it follows from Remark \ref{rem:inf} that 
\[
s_0=\inf\{s>0\,|\,K_X+\Delta_1+sH\text{ is nef}\}.
\]
In particular,
\[
s_0\geq\inf\{s>0\,|\,K_X+\Delta_1+sH\text{ is pseudo-effective}\}= \sigma-\varepsilon_1.
\] 
Therefore, $s_0+(\varepsilon_1-\varepsilon_2)\geq \sigma-\varepsilon_2$, which is positive by assumption.

For the induction step we assume that  $s_j+(\varepsilon_1-\varepsilon_2)$ is positive for each $j\leq i$, and we aim to show that $s_{i+1}+(\varepsilon_1-\varepsilon_2)$ is also positive. Assume this not the case. This immediately implies $\varepsilon_2>\varepsilon_1$, in particular $(X,\Delta^2)$ is klt. Moreover, the ray $R_{i+1}$ is $(K_{X_{i}}+\Delta^2_i)$-negative thus $\varphi_{i+1}$ is a step of a $(X,\Delta^2)$-minimal model program with scaling of $H$. We obtain the following nef $\bR$-divisors on $X_{i+1}$.
\begin{align*}
&K_{X_{i+1}}+\Delta^2_{i+1}+ \left(s_i+(\varepsilon_1-\varepsilon_2)\right)H_{i+1}\quad\text{and}\\
&K_{X_{i+1}}+\Delta^2_{i+1}+ \left(s_{i+1}+(\varepsilon_1-\varepsilon_2)\right)H_{i+1}
\end{align*}
Convexity of the nef cone implies that also $K_{X_{i+1}}+\Delta^2_{i+1}$ is nef, thus a run of the minimal model program with scaling for the pair $(X,\Delta^2)$ terminates with a minimal model, a contradiction to Theorem \ref{thm:kltmmp}.

It remains to show (\ref{pf:terminates}). We assume on the contrary that the first sequence terminates and the second one does not. This in particular implies that the first minimal model program terminates with a minimal model. Exchange $\varepsilon_1$ for $\varepsilon_2$, and we obtain a contradiction to (\ref{pf:positive}).
\end{proof}

\section{The moving cone of $\bQ$-factorial dlt pairs} \label{sec:cone}
The goal of this chapter is to prove Theorem \ref{thm:mcone}.
The proof is given in several steps.
We first analyze an arbitrary Mori fiber space and specify the curves we want to pull back. More precisely, we construct the following subvariety of the Hilbert scheme.

\begin{lem}\label{lem:moving}
Let $\lambda:X\rto X'$ be a birational map between normal projective varieties which is surjective in codimension one. Let $B$ be a variety with $\dim B<\dim X'$, and let $\pi:X'\to B$ a surjective morphism with connected fibers. 
Then there is an irreducible locally closed subvariety $H$ of the Hilbert scheme of curves on $X$ such that 
\begin{enumerate}
\item any closed point of $H$ corresponds to a moving curve that is contained in the open set where $\lambda$ is an isomorphism,
\item any closed point of $H$ corresponds to a curve $C$ whose image $\lambda(C)$ lies in a fiber of $\pi$, and
\item if $Z\subset X$ has codimension greater than or equal to two, then the set\[ H_Z :=\{p\in H\,|\, p \text{ corresponds to a curve that avoids }Z\}
\]
  is non-empty and open in $H$.
\end{enumerate}
\end{lem}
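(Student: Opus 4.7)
The plan is to construct the desired family by intersecting general fibres of $\pi$ with general hyperplane sections on $X'$, and then transporting the resulting curves back to $X$ via the inverse of $\lambda$ on its isomorphism locus. First I would set $U \subset X$ equal to the open set where $\lambda$ is defined as a morphism and let $U' := \lambda(U)$. Since $\lambda$ is birational and surjective in codimension one, both $X \setminus U$ and $X' \setminus U'$ have codimension at least two, and $\lambda$ restricts to an isomorphism $U \to U'$.

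Next I would fix a very ample divisor $A'$ on $X'$ and set $d := \dim X' - \dim B - 1 \geq 0$. Over a dense open subset $B^\circ \subset B$, the fibre $F_b := \pi^{-1}(b)$ is irreducible of maximal dimension $\dim X' - \dim B$. By Bertini, for $v = (b, H_1, \ldots, H_d)$ in a dense open subset $V \subset B^\circ \times |A'|^d$, the complete intersection $C'_v := F_b \cap H_1 \cap \cdots \cap H_d$ is a smooth irreducible curve. A standard dimension count, using that $\dim(X' \setminus U') \leq \dim X' - 2$, shows that after shrinking $V$ we have $C'_v \subset U'$ for every $v \in V$, and the same argument will later take care of any fixed codimension $\geq 2$ subset of $X'$. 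I would then transport the tautological flat family $\mathcal{C}' \subset V \times U'$ through $\mathrm{id}_V \times \lambda^{-1}$ to a flat family $\mathcal{C} \subset V \times U \subset V \times X$. After stratifying $V$ and restricting to a dense open subset, the Hilbert polynomial of the fibres becomes constant, so the classifying morphism sends $V$ into a single irreducible component of $\Hilb(X)$, and $H$ is defined as an irreducible locally closed subvariety of $\Hilb(X)$ containing a dense open subset of the image.

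Verification is then fairly direct. Condition~(1) follows because each $C_v := \lambda^{-1}(C'_v)$ is contained in $U$ by construction and the family sweeps out a dense open subset of $X$ (via the isomorphism $U \to U'$, which carries the sweep of the $C'_v$ in $X'$ into one in $X$). Condition~(2) is immediate, as $\lambda(C_v) = C'_v \subset F_b$ lies in a fibre of $\pi$. For condition~(3), given a subset $Z \subset X$ of codimension at least two, set $Z' := \overline{\lambda(Z \cap U)} \cup (X' \setminus U')$, which has codimension at least two in $X'$; the dimension count from before yields an open dense $V^Z \subset V$ on which $C'_v$ is disjoint from $Z'$, so that $C_v$ is disjoint from $Z$, and the image of $V^Z$ in $H$ provides the required open subset $H_Z$.

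The hard part I anticipate is the Hilbert-scheme bookkeeping: turning the constructed flat family into an honest irreducible locally closed subvariety of $\Hilb(X)$. This boils down to ensuring constancy of the Hilbert polynomial across fibres, which should follow from flatness combined with stratifying $V$, but it requires care because the classifying morphism $V \to \Hilb(X)$ a priori has only constructible image.
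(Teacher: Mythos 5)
Your proposal is correct and follows essentially the same approach as the paper: intersect a general fiber of $\pi$ with general hyperplane sections to produce a family of smooth curves avoiding the codimension-two locus $X'\setminus U'$, then pull back to $X$ via the isomorphism $\lambda^{-1}|_{U'}$. The only cosmetic differences are that the paper allows the $k-1$ very ample linear systems to differ rather than taking powers of a single $A'$, and that you are somewhat more explicit about the Hilbert-scheme bookkeeping, which the paper leaves implicit.
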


\begin{proof}
Let $U\subset X$ denote the set where $\lambda$ is an isomorphism onto its image $V:=\lambda(U)$. We aim to find a dominating family of curves that is entirely contained in $U$.

To this end, we first remark that the inverse $\lambda^{-1}$ does not contract any divisor, thus $\codim_{X'}(X'\setminus V)\geq 2$ holds. Therefore, if $F$ is a general fiber of $\pi$, then $\codim_F(F\setminus V)\geq 2$, as well. 
Let $k$ be the relative dimension of $X'$ over $B$, and pick $k-1$ very ample divisors $H_1,\dots,H_{k-1}$ on $X'$. If $D_1,\dots,D_{k-1}$ are general members of the corresponding linear systems $\left|H_1\right|,\dots,\left|H_{k-1}\right|$ then the intersection $F\cap D_1\cap\dots\cap D_{k-1}\subset F$ is an irreducible smooth curve that avoids $X'\setminus V$. 
We conclude that there is an open subset $U\subset B\times \left|H_1\right|\times\dots\times\left|H_{k-1}\right|$ such that for $(b,D_1,\dots,D_{k-1})\in U$  the intersection $\pi^{-1}(b)\cap D_1\cap\dots\cap D_{k-1}$ is a smooth curve.
This defines a family of curves that are entirely contained in $V$. Moreover, if $Z'$ is any subvariety of $X'$ of codimension greater than or equal to two then the general member of this family avoids $Z'$.

Via $\lambda$ we obtain the required family of curves on $X$ which in turn defines the subset $H$ of the Hilbert scheme. Moreover, if $Z\subset X$ has codimension greater than or equal to two, then $Z':=\lambda(U\cap Z)\subset X'$ has codimension greater than or equal to two as well. Thus a general point of $H$ corresponds to a curve that avoids $Z$.  
\end{proof}

\begin{cor}
If a minimal model program leads to a Mori fiber space, then the numerical pullback of any curve on a fiber of the Mori fiber space is a moving class. 
\end{cor}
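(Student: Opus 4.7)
The plan is to combine Lemma \ref{lem:moving} with the projection formula. First I would invoke Lemma \ref{lem:moving} (with $Z$ empty, say) applied to the birational map $\lambda : X \rto X'$ and the Mori fibration $\pi : X' \to B$. This produces a family $H$ of curves on $X$ whose general member $C$ is a moving curve entirely contained in the open set $U$ on which $\lambda$ restricts to an isomorphism $\lambda|_U : U \xrightarrow{\sim} V := \lambda(U)$, and whose image $\lambda(C)$ sits inside a fiber of $\pi$. In particular, the class $[C] \in N_1(X)$ lies in the moving cone $\ol\NM_1(X)$.

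Next I would verify the identity $[C] = \lambda^*[\lambda(C)]$ in $N_1(X)$. To this end, for an arbitrary divisor $D$ on $X$ one computes, using the projection formula,
\[
\lambda^*[\lambda(C)] \cdot [D] \;=\; [\lambda(C)] \cdot \lambda_*[D].
\]
Since $C\subset U$, the curve $\lambda(C)$ lies in $V$ and $\lambda|_U$ is an isomorphism, so the right-hand intersection number is computed on $V$ and agrees with $C\cdot D$ (both $\lambda(C)$ and $\lambda_*D$ restrict on $V$ to the images of $C$ and $D|_U$). Hence $\lambda^*[\lambda(C)] \cdot [D] = [C]\cdot [D]$ for every $[D] \in N^1(X)$, and the duality $N_1(X) \cong N^1(X)^\vee$ yields the desired equality.

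Now pick any curve $C_0$ contained in a fiber of $\pi$, and let $[C_0]\in N_1(X')$ denote its numerical class. Because $\pi$ is a Mori fiber space, the relative Picard number $\rho(X'/B)$ equals one, so every effective $1$-cycle supported in a fiber of $\pi$ generates the same ray of $\ol\NE_1(X')$. In particular there exists $t_C > 0$ such that $[\lambda(C)] = t_C\,[C_0]$. Combining with the previous step,
\[
[C] \;=\; \lambda^*[\lambda(C)] \;=\; t_C \cdot \lambda^*[C_0].
\]
Since $[C]\in \ol\NM_1(X)$ and $t_C>0$, the class $\lambda^*[C_0]$ lies in $\ol\NM_1(X)$ as well, proving the corollary.

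The only subtle point in this plan is the identity $[C] = \lambda^*[\lambda(C)]$: one has to be careful that $\lambda_*[D]$ really does agree with $D$ on $V$, which is precisely why it is essential that Lemma \ref{lem:moving} places $C$ inside the isomorphism locus $U$ and that $\lambda$ is surjective in codimension one (so that $\lambda^*$ and $\lambda_*$ of divisors behave cleanly under the projection formula of Definition~\ref{def:numerical}).
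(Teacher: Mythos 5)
Your proposal is correct and takes essentially the same route as the paper: invoke Lemma~\ref{lem:moving} to produce a moving curve $C$ whose image lies in a fiber, then use $\rho(X'/B)=1$ to relate $[\lambda(C)]$ to the class of any curve in a fiber, and pull back. The only difference is that you spell out the identity $[C]=\lambda^*[\lambda(C)]$ via the projection formula, a point which the paper's proof leaves implicit; this is a reasonable detail to make explicit, and your justification (that $\lambda_*D$ and $D$ agree over the isomorphism locus, while the discarded components of $D$ are contracted and hence disjoint from $C\subset U$) is sound.
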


\begin{proof}
Note that a minimal model program which leads to a Mori fiber space satisfies the condition of Lemma \ref{lem:moving}.
  Let $X'\to B$ denote the Mori fiber space, then the relative Picard number $\rho(X'/B)$ is one. Thus all classes of curves in fibers are numerically proportional in $X'$, and Lemma \ref{lem:moving} shows that these classes are moving. 
\end{proof}
The next step in the proof of Theorem \ref{thm:mcone} is the construction of a divisor suitable for running the minimal model program with scaling. This will be done in the following lemma which is strongly related to \cite[Lemma 4.3]{Leh09}. A similar statement is also given in \cite[Proof of Theorem 1.1]{Ara08}. 
\begin{lem}\label{lem:bounding}
  Let $(X,\Delta)$ be a $\bQ$-factorial dlt pair and let
\[R\subset \ol\NM_1(X)+\ol\NE_1(X)_{K_X+\Delta\geq 0}
\]
 be a $(K_X+\Delta)$-negative exposed ray.
Then there is an $\bR$-ample $\bR$-divisor $H$ such that for $\sigma:=\inf\{s>0\,|\, K_X+\Delta+sH\in\ol\NE^1(X)\}$ the following holds.
\begin{enumerate}
\item The divisor $ K_X+\Delta+H$ is nef.\label{item:nef}
\item $(K_X+\Delta+\sigma H)^\perp\cap(\ol\NM_1(X)+\ol\NE_1(X)_{K_X+\Delta\geq 0}) = R$. \label{item:bounding}
\item $(K_X+\Delta+s H)^\perp\cap(\ol\NM_1(X)+\ol\NE_1(X)_{K_X+\Delta\geq 0}) = 0$, if $s>\sigma$. \label{item:unique}
\end{enumerate}
\end{lem}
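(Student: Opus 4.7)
The plan is to construct $H$ starting from an exposing functional for $R$, in the spirit of Araujo's argument in the klt case. Since $R$ is an exposed ray of the cone $C:=\ol\NM_1(X)+\ol\NE_1(X)_{K_X+\Delta\geq 0}$, by definition there exists a class $[L]\in N^1(X)$ with $L\cdot R=0$ and $L\cdot\beta>0$ for every $\beta\in C\setminus R$. Because $L$ is non-negative on $\ol\NM_1(X)$, the Boucksom--Demailly--Paun--Peternell duality theorem shows that $L$ is pseudo-effective, so $L$ lies in the closure of the effective cone.

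Fix an ample class $A$ and look for positive real numbers $\lambda,\delta$ such that $L':=\lambda L+\delta A$ is nef and $L'-(K_X+\Delta)$ is ample; then set $H:=L'-(K_X+\Delta)$. On the sub-cone $C$ the class $L'$ is strictly positive on $C\setminus\{0\}$, since $L\geq 0$ there and $A$ is ample. To extend nefness to all of $\ol\NE_1(X)$, I would invoke the dlt cone theorem and the Kawamata--Shokurov length bound recalled in Section~\ref{sec:dlt}: the $(K_X+\Delta)$-negative extremal rays of $\ol\NE_1(X)$ lying outside $C$ are countably many, accumulate only along $(K_X+\Delta)^\perp$, and are generated by rational curves of uniformly bounded length. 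A single choice of $\delta$ large with respect to $\lambda$ then makes $L'$ non-negative on all such rays, hence nef. Ampleness of $H=L'-(K_X+\Delta)$ follows because the piece $\delta A-(K_X+\Delta)$ can simultaneously be arranged to be ample.

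Property~(\ref{item:nef}) is then immediate: $K_X+\Delta+H=L'$ is nef by construction. To pin down $\sigma$, write
\[
K_X+\Delta+sH=(1-s)(K_X+\Delta)+s\lambda L+s\delta A,
\]
and apply BDPP: this class is pseudo-effective iff it pairs non-negatively with every moving class. Pairing with a generator $\alpha$ of $R$ gives $L\cdot\alpha=0$, so the equation $(K_X+\Delta+\sigma H)\cdot\alpha=0$ fixes a unique $\sigma\in(0,1)$. For $\alpha'\in C\setminus R$ a moving class, the term $s\lambda L\cdot\alpha'$ is strictly positive, and choosing $\lambda$ large ensures $(K_X+\Delta+\sigma H)\cdot\alpha'>0$; thus the infimum is realised exactly along $R$. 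This gives $(K_X+\Delta+\sigma H)^\perp\cap C=R$, which is~(\ref{item:bounding}). For~(\ref{item:unique}), decompose $K_X+\Delta+sH=(K_X+\Delta+\sigma H)+(s-\sigma)H$ for $s>\sigma$: the first summand is non-negative on $C$ and vanishes only on $R$, while the second is a positive multiple of the ample class $H$ and hence strictly positive on $\ol\NE_1(X)\setminus\{0\}$; the sum is therefore strictly positive on all of $C\setminus\{0\}$.

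The main obstacle I anticipate is the extension of nefness of $L'=\lambda L+\delta A$ from $C$ to all of $\ol\NE_1(X)$. The exposing functional $L$ controls only $C$; on the $(K_X+\Delta)$-negative extremal rays outside $C$ one must bound the negativity of $L$ uniformly, and this is the place where the dlt generalisation of the cone theorem and the length estimates for extremal rays enter essentially. A secondary subtlety is ensuring that the infimum $\sigma$ is realised \emph{uniquely} on $R$ and not on some second moving class; this is handled by the strict separation $L\cdot\beta>0$ for $\beta\in C\setminus R$ combined with taking $\lambda$ sufficiently large relative to $\delta$.
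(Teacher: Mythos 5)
Your ansatz $H := \lambda L + \delta A - (K_X+\Delta)$ has a structural problem that makes part~(\ref{item:bounding}) fail. The class $K_X+\Delta+sH = (1-s)(K_X+\Delta)+s\lambda L+s\delta A$ contains a leftover ample piece $s\delta A$ that does not vanish at the pseudo-effective threshold, so $K_X+\Delta+\sigma H$ is \emph{not} proportional to the exposing functional $L$. You tacitly identify $\sigma$ with the value $s_0$ solving $(1-s_0)(K_X+\Delta)\cdot\alpha + s_0\delta A\cdot\alpha=0$ for a generator $\alpha$ of $R$; but BDPP characterises pseudo-effectivity as non-negativity against \emph{all} moving classes, and the minimum can be achieved off $R$. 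Concretely, take $\ol\NM_1 = \ol\NE_1 = \{(x,y,z): z\geq\sqrt{x^2+y^2}\}$, $R=\bR_{\geq0}(1,0,1)$, $L=z-x$, $K_X+\Delta=-x+y$ (negative on $R$), $A=z$. Then $C=\ol\NM_1+\ol\NE_1{}_{K_X+\Delta\geq0}$ is the whole round cone and $R$ is exposed by $L$. For any $\lambda,\delta>0$ one finds that $K_X+\Delta+s_0H$ (with $s_0=1/(1+\delta)$ as above) is strictly negative on a class near $R$ but off it, so the true $\sigma$ is strictly larger than $s_0$, and $(K_X+\Delta+\sigma H)$ is then strictly \emph{positive} on $R$ and vanishes on a different ray. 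So $(K_X+\Delta+\sigma H)^\perp\cap C\neq R$, and property~(\ref{item:bounding}) fails. Taking $\lambda$ large does not cure this; it only pushes the bad ray closer to $R$ without reaching it, and moreover taking $\lambda$ large conflicts with your earlier need to take $\delta$ large to make $L'=\lambda L+\delta A$ nef.

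The fix, which is the route the paper takes, is to build $H$ as a positive multiple of $D - a(K_X+\Delta)$ for a suitable $a>0$ (here $D$ is an exposing divisor for $R$), rather than adding a separate ample $A$. With this choice the $K_X+\Delta$ terms cancel at $s=\frac{1}{al}$, so that $K_X+\Delta+\sigma H$ is literally proportional to $D$, and $(K_X+\Delta+\sigma H)^\perp\cap C = D^\perp\cap C = R$ is immediate. The ampleness of $D-a(K_X+\Delta)$ is then obtained not by a length bound or by adding $A$, but by a separating-hyperplane argument: since $D$ and $K_X+\Delta$ have no common zero on $\ol\NE_1(X)\setminus\{0\}$, one finds a linear combination $bD+c(K_X+\Delta)$ that is nowhere zero on $\ol\NE_1(X)\setminus\{0\}$, and connectedness plus a sign check on a $(K_X+\Delta)$-trivial class (respectively on a generator of $R$) shows it is everywhere positive with the right sign of $a=-c/b$. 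This is more elementary than the length-bound strategy you envisage and avoids the nefness-versus-separation tension entirely.
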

\begin{rem}[Picture]
The assertion of the previous lemma can be visualized in the following picture which shows the $(K_X+\Delta)$-negative part of the cones.
\begin{tikzpicture}
\path[shade] (2,0) -- (3,3) -- (6,4) -- (9,3) -- (10,0) --cycle;
\path[draw]      (2,0) -- (3,3) -- (6,4) -- (9,3) -- (10,0)  node[right] at(6.1,2.9){$\ol\NE_1(X)_{K_X+\Delta\leq 0}$}; 

\path[draw, fill=orange] (2,0) -- (4,2) -- (7,2.5) -- (8,2) -- (10,0) --cycle node[right] at(4,0.5){$\ol\NM_1(X)+\ol\NE_1(X)_{K_X+\Delta\geq 0}$}; 

\draw[dashed, style= thick] (0,0) -- (12.3,0) node[right]at(10.3,0.2){$(K_X +\Delta)^\perp$}; 
\draw (0,1.5) -- ++(4,4)node[right]{$(K_X+\Delta+H)^\perp$} ; 
\path (3.9,1.7) node[right]{Exposed ray $R$}; 
\draw[->,dotted,thick] (2.4,3.7) node[right]{Scaling of $H$} --(3.6,2.2); 

\draw (1,0.5) -- ++(7.5,3.75) node[right]{$(K_X+\Delta+\sigma H)^\perp$}; 

\path (-0.2,-0.3) node[right]{\small The picture suggests that the minimal model  program with scaling of $H$} (-0.2,-0.6)node[right]{\small terminates with the contraction of $R$.};
\path (0,-1);
\end{tikzpicture}
\end{rem}
\begin{proof}
We start with the construction of $H$. 
By definition of \emph{exposed} there exists an $\bR$-divisor $D$ such that 
\[
R= D^\perp\cap(\ol\NM_1(X)+\ol\NE_1(X)_{K_X+\Delta\geq 0}),
\]
and $D$ is non-negative on $\ol\NM_1(X)+\ol\NE_1(X)_{K_X+\Delta\geq 0}$.
We claim that there is an $a>0$ such that $D-a(K_X+\Delta)$ is an ample $\bR$-divisor. %
If $-(K_X+\Delta)$ is ample, we can take any sufficiently large $a$. Thus we may assume without loss of generality that $-(K_X+\Delta)$ is not ample.  
Since $D$ and $K_X+\Delta$, considered as forms on $N_1(X)$,  have no common zeros in $\ol\NE_1(X)\setminus\{0\}$, there exists a hyperplane $Z\subset N_1(X)$ such that 
\[
\left(D^\perp\cap (K_X+\Delta)^\perp\right)\subset Z\quad\text{and}\quad Z\cap \ol\NE_1(X)=\{0\}.
\]
It follows from basic linear algebra that 
there exist $b,c\in\bR$ such that $Z=(bD + c(K_X+\Delta))^\perp$, i.e., for any $x\in \ol\NE_1(X)\setminus\{0\}$ the inequality $(bD + c(K_X+\Delta))\cdot x \neq 0$ holds. This inequality still holds if we slightly vary $b$ and $c$, thus we may assume that both $b$ and $c$ are not zero. We set $a:=-\frac{c}{b}$, and it remains to show that the resulting divisor is ample and that $a$ is positive.
Since $-(K_X+\Delta)$ is not ample, there exists $w\in \ol\NE_1(X)\setminus\{0\}$ intersecting $K_X+\Delta$ trivially. Thus we have $(D-a(K_X+\Delta))\cdot w = D\cdot w >0$, by the choice of $D$. Since the cone $\ol\NE_1(X)$ is connected, the divisor $D-a(K_X+\Delta)$ intersects any element of $\ol\NE_1(X)\setminus\{0\}$ positively, and Kleiman's ampleness criterion implies that the divisor is ample.
To see that $a$ is positive we consider the intersection product of $D-a(K_X+\Delta)$ with a generator $z$ of $R$. Since this is positive, $a$ is positive and the claim follows.

To finish the construction of $H$, we choose $l>0$ such that $K_X+\Delta + l(D-a(K_X+\Delta))$ is nef, and set 
\[H:=l(D-a(K_X+\Delta)).
\] 

It remains to show that $H$ has the required properties.
Property (\ref{item:nef}) follows immediately from the construction of $H$. To show Property (\ref{item:bounding}), we first observe that $D$ is numerically proportional to $K_X+\Delta+\frac{1}{al}H$. By \cite[Theorem 2.2]{BDPP04}, the cones $\ol\NM_1(X)$ and $\ol\NE^1(X)$ are dual. Consequently, the divisor $D$ is pseudo-effective, in particular $\sigma\leq \frac{1}{al}$. Moreover, $K_X+\Delta + sH$ intersects any generator of $R$ negatively for any  $s<\frac{1}{al}$. Therefore $\sigma=\frac{1}{al}$ and $D$ is numerically proportional to $K_X+\Delta+\sigma H$, hence
\begin{align*}
(K_X+\Delta+\sigma H)^\perp&\cap(\ol\NM_1(X)+\ol\NE_1(X)_{K_X+\Delta\geq 0})\\
=D^\perp&\cap(\ol\NM_1(X)+\ol\NE_1(X)_{K_X+\Delta\geq 0}) =R,
\end{align*}
 as required. 

To prove the last Property (\ref{item:unique}), recall that $H$ is ample. This immediately implies that for any $s>0$ and $\gamma\in\ol\NE_1(X)_{K_X+\Delta\geq 0}$ the intersection product $(K_X+\Delta+sH)\cdot\gamma$ is positive. Moreover, for any $s>\sigma$ the divisor $K_X+\Delta+sH= K_X+\Delta+\sigma H + (s-\sigma)H$ is big, thus it intersects any $\gamma\in\ol\NM_1(X)$ positively. 
\end{proof}
With the previous lemmas at hand, we are now able to prove the Moving Cone Theorem \ref{thm:mcone}.   
\begin{proof}[Proof of Theorem \ref{thm:mcone}]
Let $(X,\Delta)$ and $R$ be as in Lemma~\ref{lem:bounding}. We apply this lemma and obtain an $\bR$-ample $\bR$-divisor $H$ and positive number $\sigma$ that satisfy properties (\ref{item:nef}), (\ref{item:bounding}), (\ref{item:unique}). The existence of $R$ implies that $K_X+\Delta$ is not pseudo-effective, see \cite[Theorem 2.2]{BDPP04}. By Theorem~\ref{thm:dltmmp} we obtain a terminating minimal model program with scaling of $H$ which we denote $(\varphi_i,s_i)_{i\in I}$. 
By Proposition \ref{prop:kltdlt}, there exists for any $0< \varepsilon < \sigma$ an $\bR$-divisor $\Delta_\varepsilon \equiv \Delta+\varepsilon H$ such that $(X,\Delta_\varepsilon)$ is klt.
It follows from Theorem \ref{thm:dltmmp} that the sequence $(\varphi_i,s_i-\varepsilon)_{i\in I}$ is a minimal model program with scaling of $H$ for the pair $(X,\Delta_\varepsilon)$, and that both minimal model programs terminate with a Mori fiber space, say $\pi:X_l\to B$. Denote by $\lambda$ the composition of all $\varphi_i$, $i\in I$, then we obtain the following diagram 
\[
\begin{xy}
\xymatrix{
           X \ar@{ -->}[r]^{\lambda} & X_l\ar[d]^\pi \\
           & B.
            }
\end{xy}
\]
The family of curves constructed in Lemma \ref{lem:moving} gives the required subset $H_R$ of the Hilbert scheme. It remains to show that the class $\gamma$ of a curve corresponding to a closed point of $H_R$ generates $R$. Since $\gamma$ is moving and because of Property (\ref{item:bounding}) of Lemma \ref{lem:bounding}, it suffices to prove that  the equality 
\[ 
(K_X+\Delta+\sigma H)\cdot\gamma =0
\]
holds.

To this end, we consider the decreasing sequence of positive numbers
\begin{align*}
 s_1-\varepsilon\geq s_2-\varepsilon\geq \cdots \geq s_l-\varepsilon \geq 0.
\end{align*}
Since the inequality $s_l-\varepsilon \geq 0$ holds for all $\varepsilon\in [0,\sigma)$, we obtain $s_l\geq \sigma$.
To show $s_l\leq \sigma$, we note that if $C$ is any curve on a general fiber of $\pi$, then the class $\gamma$ is numerically proportional to $\lambda^*([C])$. Therefore
\begin{align*}
0 &=(K_{X_l}+\Delta_l+s_l{\lambda}_*H)\cdot C\\
  &= (K_X+\Delta+s_l H)\cdot\gamma.
\end{align*}
Consequently, Property (\ref{item:unique}) of Lemma \ref{lem:bounding} implies $s_l = \sigma$. We now apply Property (\ref{item:bounding}) of Lemma \ref{lem:bounding} again, which implies that $R$ is generated by $\gamma$.
\end{proof}

\section{$\bQ$-factorializations of dlt pairs}\label{sec:nonfactorial}

\subsection{$\bQ$-factorialization}
If $(X,\Delta)$ is a dlt pair where $X$ is \emph{not} $\bQ$-factorial, then we cannot apply Theorem \ref{thm:mcone}. To overcome this difficulty, we aim to replace $X$ with a small, $\bQ$-factorial modification. 

\begin{defn}[$\bQ$-factorialization]
Let $X$ be a normal projective variety. A \emph{$\bQ$-factorialization of $X$} is a proper birational morphism $f:Y\to X$ where $Y$ is a normal projective $\bQ$-factorial variety and the exceptional set of $f$ has codimension greater than or equal to two in $Y$.
\end{defn}
\begin{ex}
Let $(Y,\Delta)$ be a $\bQ$-factorial dlt pair. Assume that there is a $(K_Y+\Delta)$-negative extremal ray $R$ of the cone $\ol\NE_1(Y)$ whose associated contraction map $\cont_R: Y\to X$ is small. Then $X$ is not $\bQ$-factorial and $\cont_R:Y\to X$ is a $\bQ$-factorialization of $X$. 
\end{ex}

The existence of $\bQ$-factorializations of dlt pairs is a result of \cite{BCHM10}. 

\begin{prop}[{\cite[Corollary 1.4.3]{BCHM10}}]\label{prop:factorial}
Let $(X,\Delta)$ be a log canonical pair and let $f:W\to X$ be a log resolution. Suppose that there is a divisor $\Delta_0$ such that $K_X+\Delta_0$ is klt. Let $\fE$ be any set of valuations of $f$-exceptional divisors which satisfies the following two properties:
\begin{enumerate}
\item $\fE$ contains only valuations of log discrepancy at most one, and
\item the centre of every valuation of log discrepancy one in $\fE$ does not contain any non-klt centres.
\end{enumerate}
Then we may find a proper birational morphism $\pi:Y\to X$, such that $Y$ is $\bQ$-factorial and the exceptional divisors of $\pi$ correspond to the elements of $\fE$.
\qed
\end{prop}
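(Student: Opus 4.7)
My plan is to construct $\pi : Y \to X$ as a relative log minimal model over $X$ for a carefully chosen boundary on the log resolution $f : W \to X$, and then read off from the construction that the $\pi$-exceptional divisors are precisely those corresponding to $\fE$. The framework is a direct application of the main machinery of \cite{BCHM10}; the role of $\fE$ is encoded in the coefficients of the boundary.

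First, partition the $f$-exceptional divisors into $\{E_i\}_{i\in I}$, those corresponding to elements of $\fE$, and $\{F_j\}_{j\in J}$, the remaining ones. Writing
\[
K_W + f_*^{-1}\Delta = f^*(K_X+\Delta) + \sum_i a_i E_i + \sum_j b_j F_j,
\]
with $a_i = a(E_i; X, \Delta)$ and $b_j = a(F_j; X, \Delta)$, hypothesis (a) amounts to $a_i \leq 0$. I would set
\[
\Gamma := f_*^{-1}\Delta + \sum_i E_i + \sum_j (1 - \epsilon) F_j
\]
for a small $\epsilon > 0$, so that $(W, \Gamma)$ is log smooth, hence dlt, and the exceptional correction
\[
K_W + \Gamma - f^*(K_X+\Delta) = \sum_i (a_i+1) E_i + \sum_j (b_j + 1 - \epsilon) F_j
\]
is effective and $f$-exceptional, with coefficients of the $E_i$ lying in $[0,1]$ (their log discrepancies) and coefficients of the $F_j$ strictly positive after adjusting $\epsilon$.

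Next, I would run the $(K_W+\Gamma)$-MMP over $X$ and apply BCHM to produce a relative log terminal model $\pi : Y \to X$. This is where the assumption on $\Delta_0$ enters: perturbing $\Gamma$ in the style of Proposition \ref{prop:kltdlt} using an $f$-pullback of a small multiple of $\Delta_0 - \Delta$, one obtains a klt pair on $W$ whose boundary is big over $X$, placing the situation in the scope of Theorem 1.2 of \cite{BCHM10}. The relative MMP then terminates with a $\bQ$-factorial model $Y$, since $\bQ$-factoriality is preserved by divisorial contractions and flips. Since the exceptional correction is effective and $f$-exceptional, the negativity lemma forces the MMP to contract every $F_j$.

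The remaining and most delicate point is that no $E_i$ is contracted, so the $\pi$-exceptional set is exactly $\{E_i\}_{i \in I}$. For $E_i$ of log discrepancy strictly less than one this is automatic, since such divisors have strictly negative discrepancy with respect to $(X,\Delta)$ and must appear on any log terminal model. The main obstacle is the case of $E_i$ of log discrepancy exactly one: these sit at the boundary between "extracted" and "contracted" and can be lost under a careless perturbation. Hypothesis (b) is precisely what resolves this — because the centre of such an $E_i$ avoids every non-klt centre of $(X, \Delta)$, one can choose the perturbing divisor so that the coefficient of $E_i$ in the perturbed boundary remains at $1$, ensuring that $E_i$ survives as a component of $Y$. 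This identifies the $\pi$-exceptional divisors with $\fE$ and completes the argument.
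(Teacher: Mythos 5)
The paper offers no argument here: Proposition~\ref{prop:factorial} is quoted directly from \cite[Corollary 1.4.3]{BCHM10} and closed with a \qed, so your sketch has to be judged on its own terms against the BCHM machinery. The overall strategy --- choose a boundary $\Gamma$ on the log resolution $W$, run a relative $(K_W+\Gamma)$-MMP over $X$, and read off $\fE$ from the surviving exceptional divisors --- is the right one, but your specific choice of $\Gamma$ undermines it.

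The problem is the coefficient $1$ you assign to each $E_i\in\fE$. With $\Gamma=f_*^{-1}\Delta+\sum_i E_i+\sum_j(1-\epsilon)F_j$ the crepant correction along $E_i$ is $a_i+1$, the log discrepancy, which is \emph{strictly positive} whenever that log discrepancy is nonzero. Your own negativity-lemma reasoning then forces the relative MMP to contract the entire support of the correction divisor, i.e.\ to contract precisely those $E_i$ you are trying to preserve. The fallback claim that a divisor of strictly negative discrepancy over $(X,\Delta)$ ``must appear on any log terminal model'' is simply false: take $X$ itself $\bQ$-factorial, klt and singular, so that $\pi=\mathrm{id}$ is a legitimate small $\bQ$-factorial model, yet there are exceptional divisors of negative discrepancy over $X$ that it does not extract. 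The correct boundary gives $E_i$ the coefficient $-a(E_i;X,\Delta)=1-(\text{log discrepancy of }E_i)\in[0,1]$, so that the crepant correction vanishes identically along every $E_i$; then a discrepancy comparison (a divisor contracted in a $(K_W+\Gamma)$-MMP has its discrepancy strictly increase, whereas crepantness of $(Y,\Gamma_Y)$ over $(X,\Delta)$ would fix it at $a_i$) shows no $E_i$ can ever be contracted. Even with this fix one still has to handle the $F_j$ of log discrepancy zero, whose correction coefficient also vanishes --- that is the point at which the perturbation by $\Delta_0$ and hypothesis~(b) are actually doing work, not the place where you invoke them.
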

We state the explicit result for dlt pairs in the following corollary. For klt pairs this is also explained in the discussion after the formulation of Corollary 1.4.3 in \cite[p.9]{BCHM10}. 

\begin{cor}[Existence of $\bQ$-factorializations]\label{cor:factorial}
Let $(X,\Delta)$ be a dlt (resp. klt) pair. Then a $\bQ$-factorialization of $X$ exists. Moreover, if $f:Y\to X$ is an arbitrary $\bQ$-factorialization of $X$, and $\Delta_Y:=f^{-1}_*\Delta$ is the strict transform of $\Delta$,  then the pair $(Y,\Delta_Y)$ is dlt (resp. klt).
\end{cor}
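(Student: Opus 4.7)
The plan is to deduce the corollary directly from Proposition \ref{prop:factorial}. For the existence part, one applies Proposition \ref{prop:factorial} with the empty set of valuations $\fE = \emptyset$: the two conditions on $\fE$ are then vacuous, and the resulting proper birational morphism $\pi: Y \to X$ has no exceptional divisors, with $Y$ projective and $\bQ$-factorial. This is a $\bQ$-factorialization in the sense of the preceding definition.

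To invoke Proposition \ref{prop:factorial} one needs an auxiliary effective divisor $\Delta_0$ with $(X,\Delta_0)$ klt. In the klt case this is trivial: take $\Delta_0 := \Delta$. In the dlt case, this is exactly where Proposition \ref{prop:kltdlt} enters: fix any ample $\bR$-divisor $H$ on the projective variety $X$ and any small $\varepsilon > 0$; then the effective $\bR$-divisor $\Delta_0 := \Delta_\varepsilon \sim_\bR \Delta + \varepsilon H$ produced by Proposition \ref{prop:kltdlt} makes $(X,\Delta_0)$ klt, as required.

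For the preservation of singularities, let $f: Y \to X$ be an arbitrary $\bQ$-factorialization and set $\Delta_Y := f^{-1}_* \Delta$. Since $f$ contracts no divisors, the pullback formula reads
$$f^*(K_X + \Delta) = K_Y + \Delta_Y,$$
so for every geometric valuation $E$ over $X$, equivalently over $Y$,
$$a(E, Y, \Delta_Y) = a(E, X, \Delta).$$
In the klt case, every such discrepancy is $> -1$, so $(Y,\Delta_Y)$ is klt. For the dlt case, choose a log resolution $\mu: W \to X$ of $(X,\Delta)$ whose $\mu$-exceptional divisors all have discrepancy $> -1$, arranging $\mu$ to factor through $f$ by further blow-ups if necessary; write $\mu = f \circ \mu_Y$. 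Since $f$ is proper birational and small, a prime divisor $E \subset W$ is $\mu$-exceptional if and only if it is $\mu_Y$-exceptional, and the strict transforms on $W$ of the components of $\Delta$ and $\Delta_Y$ coincide under the identification given by $f$. Hence $\mu_Y: W \to Y$ is itself a log resolution of $(Y,\Delta_Y)$, and the discrepancy identity above forces all $\mu_Y$-exceptional discrepancies to be $> -1$. Therefore $(Y,\Delta_Y)$ is dlt.

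The only non-routine step is the translation from the dlt hypothesis on $X$ to the klt hypothesis required by Proposition \ref{prop:factorial}, which is handled precisely by Proposition \ref{prop:kltdlt}. The remainder of the argument is standard discrepancy bookkeeping for small modifications and should present no obstacle.
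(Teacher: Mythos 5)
The existence argument and the preservation of klt follow the paper exactly. For the preservation of dlt, however, there is a gap in the step ``choose a log resolution $\mu\colon W\to X$ of $(X,\Delta)$ whose $\mu$-exceptional divisors all have discrepancy $>-1$, arranging $\mu$ to factor through $f$ by further blow-ups if necessary.'' The further blow-ups needed to resolve the indeterminacy of $W\rto Y$ introduce new exceptional divisors, and you have not checked that these new divisors still have discrepancy $>-1$ over $(X,\Delta)$. A dlt pair certainly admits \emph{some} log resolution with all exceptional discrepancies $>-1$ (Szab\'o), but this property is not stable under arbitrary further blow-ups, so you cannot simply append blow-ups to enforce factorization through $f$.

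To close the gap one must control where the further blow-ups live: their centres lie over $\Ex(f^{-1})$, and one needs to know that $\Ex(f^{-1})$ is contained in the non-snc locus $Z$ of $(X,\Delta)$, so that the dlt condition directly forces the new discrepancies to be $>-1$. This reduces to showing that $f$ is an isomorphism over the smooth locus of $X$ (which contains the snc locus); indeed, if $f$ failed to be an isomorphism there, then a proper birational morphism onto a smooth variety that is not an isomorphism would have to contract a divisor, contradicting that $f$ is small. This is precisely the observation the paper supplies, working directly with the KM98 definition of dlt rather than via Szab\'o's resolution-theoretic characterization; once it is in hand, your route works as well, but as written the proposal silently assumes it.
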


\begin{proof}
If $(X,\Delta)$ is dlt, then we may apply Proposition \ref{prop:kltdlt} and find a divisor $\Delta'$ such that $(X,\Delta')$ is klt. Therefore, the existence of a $\bQ$-factorialization follows from Proposition \ref{prop:factorial}, if we set $\fE=\emptyset$. 

Now let $f:Y\to X$ be an arbitrary $\bQ$-factorialization, and let $\Delta_Y$ be the strict transform of $\Delta$. Note that $f$ is small, thus the equalities 
\[
f^*(K_X+\Delta) = K_Y+\Delta_Y\quad\text{and}\quad f_*\Delta_Y=\Delta_X
\]
 hold. Moreover, the coefficients of $\Delta_Y$ are exactly the coefficients of $\Delta$, hence $\lrd \Delta \rrd =0 $ iff $\lrd \Delta_Y\rrd=0$. A straightforward calculation yields that the discrepancies of $(X,\Delta)$ and $(Y,\Delta_Y)$ are equal, which in turn implies that $(Y,\Delta_Y)$ is klt if $(X,\Delta)$ is klt; see also \cite[Lemma 2.30]{KM98}.

To show that the property \emph{dlt} is preserved, recall its definition, \cite[Definition 2.37]{KM98}. According to this, it remains to prove that the strict transform of an snc divisor on the smooth locus $U$ of $X$ is an snc divisor on $f^{-1}(U)\subset Y$. We even claim that $f|_{f^{-1}(U)}$ is an isomorphism. Indeed, if $x\in U$ is a point where the inverse map $f^{-1}$ is not regular, then \cite[Chapter II.4, Theorem 2]{Sha1} immediately implies that $f$ contracts a divisor. This contradicts the assumption that $f$ does not contract divisors. 
\end{proof}

\begin{notation}
Given a dlt pair $(X,\Delta)$ and a $\bQ$-factorialization $f:Y\to X$, we will denote by  $\Delta_Y$ the strict transform of $\Delta$ as defined in Corollary \ref{cor:factorial}. 
\end{notation}

\begin{rem}
In fact, $\bQ$-factorializations of a given variety are generally not unique. As we will see in Section \ref{sec:flops}, any log flop of a $\bQ$-factorialization yields a new $\bQ$-factorialization.
\end{rem}

\subsection{$\bQ$-factorializations of log Fano varieties}
We consider dlt pairs $(X,\Delta)$ with $-(K_X+\Delta)$ ample. Unfortunately, if $f:Y\to X$ is a $\bQ$-factorialization, then the divisor $-(K_Y+\Delta_Y)=-f^*(K_Y+\Delta)$ is generally not ample, unless $f$ is the identity.
Nevertheless, the following lemmas hold.
\begin{lem} \label{lem:bignef}
Let $(X,\Delta)$ be a dlt pair with $-(K_X+\Delta)$ ample, and let $f:Y\to X$ be a $\bQ$-factorialization of $X$. Then the divisor $-(K_Y+\Delta_Y)$ is big and nef.
\end{lem}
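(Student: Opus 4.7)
The plan is to observe that because the $\bQ$-factorialization $f\colon Y\to X$ is small by definition, the strict transform coincides with the total transform, so that $-(K_Y+\Delta_Y)$ is literally the pullback of the ample $\bR$-divisor $-(K_X+\Delta)$ under the birational morphism $f$. From there, nefness and bigness are standard.

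More precisely, I would first argue that $f^*(K_X+\Delta)=K_Y+\Delta_Y$. Since the exceptional locus of $f$ has codimension at least two in $Y$, there are no $f$-exceptional divisors, so the ramification formula
\[
K_Y = f^*K_X + \sum a_E E
\]
reduces to $K_Y = f^*K_X$ (the sum being empty). Likewise, because $f$ contracts no divisor, the pullback $f^*\Delta$ equals the strict transform $\Delta_Y$ on the locus where $f$ is an isomorphism, and by normality of $Y$ the two $\bR$-divisors agree globally. Hence $f^*(K_X+\Delta)=K_Y+\Delta_Y$, and consequently
\[
-(K_Y+\Delta_Y) = f^*\bigl(-(K_X+\Delta)\bigr).
\]

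Having identified $-(K_Y+\Delta_Y)$ as the $f$-pullback of an $\bR$-ample $\bR$-divisor $A:=-(K_X+\Delta)$, the conclusion is immediate. Nefness follows because for any irreducible curve $C\subset Y$ the projection formula gives
\[
(f^*A)\cdot C = A\cdot f_*C \geq 0,
\]
as $A$ is ample and $f_*C$ is either zero or an effective $1$-cycle on $X$. Bigness follows from the fact that $f$ is generically finite of degree one, so for $n=\dim X = \dim Y$ one has $(f^*A)^n = A^n > 0$, and a nef $\bR$-divisor with positive top self-intersection on a projective variety is big (this is the standard numerical criterion; alternatively, one can pull back through Kodaira's lemma expressing $A$ as the sum of an ample and an effective $\bR$-divisor).

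I do not expect any genuine obstacle here: the only substantive point is that smallness of $f$ forces the discrepancy contribution to vanish, after which the statement reduces to the well-known fact that the pullback of an ample class by a proper birational morphism is big and nef.
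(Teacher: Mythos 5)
Your proof takes essentially the same route as the paper: because $f$ is small, $-(K_Y+\Delta_Y)=f^*\bigl(-(K_X+\Delta)\bigr)$, and the pullback of an ample (hence big and nef) $\bR$-Cartier divisor under a proper birational morphism is again big and nef. One small precision: apply the pullback directly to the $\bR$-Cartier divisor $K_X+\Delta$ (as recorded in Corollary~\ref{cor:factorial}) rather than splitting into $K_Y=f^*K_X$ and $f^*\Delta=\Delta_Y$, since neither $K_X$ nor $\Delta$ need be $\bR$-Cartier on its own; this does not affect the conclusion.
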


\begin{proof} 
Since $-(K_X+\Delta)$ is ample, it is in particular big and nef. The pullback of a big and nef $\bR$-Cartier divisor via a birational morphism is again big and nef.
\end{proof}

\begin{lem} \label{lem:kltmds}
  Let $(X,\Delta)$ be a $\bQ$-factorial klt pair such that $-(K_X+\Delta)$ is big and nef.
Then the cones $\ol\NM_1(X)$ and $\ol\NE_1(X)$ are rational polyhedrons. Moreover, for any divisor $D$ any minimal model program for the pair $(X,D)$ can be run and terminates.
\end{lem}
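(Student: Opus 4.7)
The plan is to reduce to the log Fano klt case by a small perturbation of $\Delta$, and then invoke the results of \cite{BCHM10} showing that $\bQ$-factorial klt log Fano pairs are Mori Dream Spaces in the sense of Hu--Keel.

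First, I would use Kodaira's lemma: since $-(K_X+\Delta)$ is big and nef, there is a decomposition $-(K_X+\Delta)\sim_\bR A+N$ with $A$ an ample $\bR$-divisor and $N$ an effective $\bQ$-divisor. For $t\in (0,1)$ sufficiently small the pair $(X,\Delta+tN)$ is still klt, and a direct computation yields
\[
-(K_X+\Delta+tN)\sim_\bR (1-t)\bigl(-(K_X+\Delta)\bigr)+tA,
\]
which is ample as a positive combination of a nef and an ample divisor. Setting $\Delta':=\Delta+tN$ gives a $\bQ$-factorial klt log Fano pair, while the cones $\ol\NE_1(X)$ and $\ol\NM_1(X)$ are unchanged.

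Next I would invoke the main results of \cite{BCHM10}, concluding that $X$ carries the structure of a Mori Dream Space. This directly yields rational polyhedrality of both the pseudo-effective cone of divisors $\ol\NE^1(X)$ and the movable cone of divisors. By the duality between $\ol\NM_1(X)$ and $\ol\NE^1(X)$ from \cite{BDPP04} already used in the proof of Lemma~\ref{lem:bounding}, $\ol\NM_1(X)$ is then rational polyhedral. The cone $\ol\NE_1(X)$ is rational polyhedral by the cone theorem applied to $(X,\Delta')$: since $-(K_X+\Delta')$ is ample, every nonzero class of $\ol\NE_1(X)$ is $(K_X+\Delta')$-negative, and the cone theorem produces only finitely many extremal rays in such a strictly negative region.

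For the termination of an arbitrary $(K_X+D)$-MMP, I would again rely on the Mori Dream Space property: any such MMP corresponds to a finite sequence of wall-crossings in the Mori chamber decomposition of the effective cone, which is finite on an MDS and hence forces termination. The main technical point will be carrying out the perturbation $\Delta\to\Delta'$ carefully enough to preserve klt singularities while achieving ampleness of $-(K_X+\Delta')$; once this reduction is in place, the remainder of the argument is essentially a translation from the MDS dictionary to the statements of the lemma.
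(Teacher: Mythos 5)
Your proof is correct and follows the same overall strategy as the paper: perturb $\Delta$ to $\Delta'$ so that $(X,\Delta')$ becomes klt log Fano, then invoke BCHM's Mori Dream Space machinery. The notable difference lies in how the perturbing divisor is produced. The paper first observes (via BDPP) that $\ol\NM_1(X)\setminus\{0\}$ and $\ol\NE_1(X)_{K_X+\Delta=0}\setminus\{0\}$ are disjoint, constructs a separating $\bR$-divisor $B$ by convexity, deduces $B$ is big, and then proves ampleness of $-(K_X+\Delta+\varepsilon B)$ via Kleiman's criterion with a compactness argument on a cross-section of the Mori cone. You instead apply Kodaira's lemma to the big and nef divisor $-(K_X+\Delta)$, writing $-(K_X+\Delta)\sim_\bR A+N$ with $A$ ample and $N$ effective, so that $-(K_X+\Delta+tN)\sim_\bR(1-t)\bigl(-(K_X+\Delta)\bigr)+tA$ is ample immediately as a sum of nef and ample. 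Your route is shorter and avoids the cone-separation and compactness computations, at the price of a slightly less geometric picture. For the two cones, the paper cites \cite[Corollary 1.3.5]{BCHM10} directly for $\ol\NM_1(X)$, whereas you go through the rational polyhedrality of $\ol\NE^1(X)$ from the Mori Dream Space property and then dualize via \cite{BDPP04}; both are standard and equivalent. One small inaccuracy: Kodaira's lemma for an $\bR$-divisor produces an effective $\bR$-divisor $N$, not necessarily a $\bQ$-divisor, but this does not affect the argument since klt is an open condition under small effective $\bR$-Cartier perturbations.
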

\begin{proof}
Recall from \cite[Theorem 2.2]{BDPP04} that a divisor is big if and only if it intersects any $\gamma\in \ol\NM_1(X)\setminus\{0\}$ positively. Hence, the cones $\ol\NM_1(X)\setminus\{0\}$ and $\ol\NE_1(X)_{K_X+\Delta=0}\setminus\{0\}$ are disjoint, and by convexity there exists an $\bR$-divisor $B$ that separates these cones, i.e., 
\begin{align*}
 \ol\NM_1(X)\setminus\{0\}&\subset N_1(X)_{B>0}, \quad\text{and}\\
  \ol\NE_1(X)_{K_X+\Delta=0}\setminus\{0\}&\subset N_1(X)_{B<0}.
\end{align*}
In particular, the divisor $B$ is big.

We claim that for sufficiently small $\varepsilon >0$ the pair $(X,\Delta+\varepsilon B)$ is still klt and the divisor $-(K_X+\Delta+\varepsilon B)$ is ample. To prove the claim we first note that for any sufficiently small $\varepsilon>0$ the pair $(X,\Delta+\varepsilon B)$ is klt, see \cite[Corollary 2.35(2)]{KM98}.
To show that $-(K_X+\Delta+\varepsilon B)$ is ample for $0<\varepsilon\ll 1$, we use Kleiman's ampleness criterion. According to this, we must show that the intersection product with any class $\gamma \in \ol\NE_1(X)\setminus\{0\}$ is positive. This is obviously true for $\gamma \in \ol\NE_1(X)_{B<0}\setminus\{0\}$, thus it remains to show that the intersection product with any class $\gamma \in \ol\NE_1(X)_{B\geq 0}\setminus\{0\}$ is positive.
 Let $H\subset N_1(X)_\bR\setminus\{0\}$ be an affine hyperplane such that its intersection with the Mori cone is a \emph{cross section}, i.e.,
\[\emptyset\neq \ol\NE_1(X)|_H:= H\cap\ol\NE_1(X)
\]
 is compact, and  
\[
\ol\NE_1(X)=\bR^{\geq 0} \cdot\ol\NE_1(X)|_H.
\] It suffices to show that $-(K_X+\Delta+\varepsilon B)$ intersects any class $\gamma\in \ol\NE_1(X)|_{H, B\geq 0}$ positively. Since $\ol\NE_1(X)|_{H, B\geq 0}$ is compact, the continuous function
\begin{align*}
\ol\NE_1(X)|_{H, B\geq 0}&\to\bR\\
 \gamma&\mapsto -(K_X+\Delta+\varepsilon B)\cdot\gamma
\end{align*}
has a global minimum $m_\varepsilon\in\bR$. This minimum depends continuously on $\varepsilon$ and is positive for $\varepsilon=0$. Consequently, the claim follows.

 The Cone Theorem implies that $\ol\NE_1(X)$ is a rational polyhedron, and the assertion for $\ol\NM_1(X)$ is proved in \cite[Corollary 1.3.5]{BCHM10}.
To show that for any divisor $D$ the minimal model program terminates, we apply \cite[Corollary 1.3.2]{BCHM10} to $(X,\Delta+\varepsilon B)$. According to this, the variety $X$ is a Mori dream space (see \cite[Definition 1.10]{HK00} for the definition), and it follows from \cite[Proposition 1.11]{HK00} that the minimal model program can be run for any divisor and terminates.
\end{proof}

\begin{cor}\label{prop:mds}
Let $(X,\Delta)$ be a dlt pair with $-(K_X+\Delta)$ ample, and let $f:Y\to X$ be any $\bQ$-factorialization of $X$. Then the cones $\ol\NE_1(Y)$ and $\ol\NM_1(Y)$ are rational polyhedrons and for any divisor the minimal model program can be run and terminates.
\end{cor}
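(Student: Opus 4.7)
The plan is to reduce the statement to Lemma~\ref{lem:kltmds} by producing on $Y$ an auxiliary klt pair $(Y,\Delta'_Y)$ whose anticanonical class is big and nef. Since the cones $\ol\NE_1(Y)$ and $\ol\NM_1(Y)$ depend only on the variety $Y$, and since termination of the MMP for every divisor is obtained in \cite{BCHM10} from the mere existence of a Mori dream space structure on $Y$, the existence of any such auxiliary klt pair with the required positivity is enough to draw both conclusions.

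First I would perturb downstairs on $X$. Pick an ample $\bR$-divisor $H$ on $X$ and apply Proposition~\ref{prop:kltdlt} to the dlt pair $(X,\Delta)$: for every $\varepsilon>0$ this produces an effective $\bR$-divisor $\Delta'_\varepsilon \sim_\bR \Delta+\varepsilon H$ such that $(X,\Delta'_\varepsilon)$ is klt. Because $-(K_X+\Delta)$ is ample by hypothesis and ampleness is an open condition on the Néron--Severi space, I can shrink $\varepsilon>0$ so that the numerically equivalent divisor $-(K_X+\Delta'_\varepsilon) \equiv -(K_X+\Delta)-\varepsilon H$ remains ample. Writing $\Delta' := \Delta'_\varepsilon$, this gives a klt pair $(X,\Delta')$ with $-(K_X+\Delta')$ ample.

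Next I would lift this pair to $Y$ along the $\bQ$-factorialization $f\colon Y\to X$. Let $\Delta'_Y := f^{-1}_*\Delta'$. Since $f$ is small, one has $f^*(K_X+\Delta') = K_Y+\Delta'_Y$, and the discrepancy computation already recorded in the proof of Corollary~\ref{cor:factorial} shows that $(Y,\Delta'_Y)$ is klt. On the other hand, $-(K_Y+\Delta'_Y) = f^*\bigl(-(K_X+\Delta')\bigr)$ is the pullback of an ample $\bR$-divisor under a birational morphism, hence big and nef. This places $(Y,\Delta'_Y)$ precisely in the hypothesis of Lemma~\ref{lem:kltmds}, from which the polyhedrality of $\ol\NE_1(Y)$ and $\ol\NM_1(Y)$ and termination of the MMP for every divisor follow at once.

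The only delicate point I foresee is the choice of $\varepsilon$: Proposition~\ref{prop:kltdlt} supplies klt for all sufficiently small $\varepsilon$, while ampleness of $-(K_X+\Delta)-\varepsilon H$ holds for all sufficiently small $\varepsilon$, so it suffices to take $\varepsilon$ below the minimum of the two thresholds. Beyond this routine verification, the argument is a straightforward chaining of Proposition~\ref{prop:kltdlt}, Corollary~\ref{cor:factorial}, and Lemma~\ref{lem:kltmds}.
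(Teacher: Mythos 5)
Your argument is correct and follows essentially the same route as the paper's own proof: perturb $\Delta$ on $X$ via Proposition~\ref{prop:kltdlt} to obtain a klt pair $(X,\Delta')$ with $-(K_X+\Delta')$ ample for small $\varepsilon$, transport it to $Y$ via Corollary~\ref{cor:factorial} (your discrepancy remark is what the paper uses there, and your big-and-nef observation is the paper's Lemma~\ref{lem:bignef}), and conclude via Lemma~\ref{lem:kltmds}. Nothing substantive differs.
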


\begin{proof}
By Lemma \ref{lem:kltmds} it suffices to show that there is a divisor $\Delta'$ on $Y$ such that $(Y,\Delta')$ is klt and $-(K_Y+\Delta')$ is big and nef.
In order to prove the existence of $\Delta'$ we first pick an ample divisor $H$ on $X$. It follows from Proposition \ref{prop:kltdlt} that for any $\varepsilon>0$ the divisor $\Delta+ \varepsilon H$ is $\bR$-linearly equivalent to a divisor $\Delta_\varepsilon$ such that $(X,\Delta_\varepsilon)$ is klt. Moreover, if $\varepsilon$ is sufficiently small then $-(K_X+\Delta_\varepsilon)$ is still ample. By Corollary \ref{cor:factorial} the pair $\left(Y,f^{-1}_*(\Delta_\varepsilon)\right)$ is klt, and Lemma \ref{lem:bignef} implies that $-(K_Y+\Delta')$ is big and nef.
\end{proof}
\subsection{Log flops of $\bQ$-factorializations}\label{sec:flops}
One main step in the proof of the Isotriviality Theorem \ref{thm:isotrivial} is to find a certain exposed moving ray which intersects a given pseudo-effective divisor $D$ non-trivially. This is not a big problem if the pair $(X,\Delta)$ is $\bQ$-factorial and log Fano. However, if we drop the assumption that $X$ is $\bQ$-factorial, then we have to switch over to a $\bQ$-factorialization $f:Y\to X$ which is generally not log Fano, as we have seen. Indeed, it could happen in this situation that the set of exposed moving rays is entirely contained in the hyperplane $(f^{-1}_*D)^\perp$ in $N_1(Y)$.
 
To prove the Isotriviality Theorem \ref{thm:isotrivial} in the non-$\bQ$-factorial case we have to find the \emph{right} $\bQ$-factorialization. We will see that a certain class of birational maps gives us new $\bQ$-factorializations. These \emph{log flops} are strongly connected to flips. 
\begin{defn}[Log flops, see {\cite[Conjecture 11.3.3]{Mat02}}]
  Let $(X,\Delta)$ be a dlt pair. A \emph{flopping contraction} is a proper birational morphism $f:X\to Y$ to a normal variety $Y$ such that the exceptional set has codimension at least two in $X$ and $K_X+\Delta$ is numerically $f$-trivial.

Assume that there exists an $\bR$-Cartier divisor $D$  on $X$ such that $-(K_X+\Delta+D)$ is $f$-ample, and the $(K_X+\Delta+D)$-flip of $f$ exists. Then this flip is also called the \emph{$D$-log flop of $f$} or \emph{log flop} for short. 
\end{defn}

\begin{rem}
If $\Delta=0$, a log flop is a \emph{flop}, see \cite[Definition 6.10]{KM98}.
\end{rem}

\begin{lem}[Existence of log flops on $\bQ$-factorializations]\label{lem:flop}
 Let $(X,\Delta)$ be a log Fano dlt pair with $\bQ$-factorialization $(Y,\Delta_Y)$. Let $D$ be an arbitrary $\bR$-divisor on $Y$, and let $F\subset\ol\NE_1(Y)_{K_Y+\Delta_Y=0}$ be an extremal face that is contained in ${D<0}$. Then
\begin{enumerate}
 \item \label{lem:flop1}the contraction $g:Y\to Z$ of $F$ exists and factorizes the $\bQ$-factorialization map $f:Y\to X$, and
 \item \label{lem:flop2}the $D$-log flop of $F$ exists and is another $\bQ$-factorialization of $X$. 
\end{enumerate}
\end{lem}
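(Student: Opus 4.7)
The plan is to exploit the Mori dream space behavior of $(Y,\Delta_Y)$ coming from Corollary \ref{prop:mds}, and to carefully track smallness across the contraction and subsequent flip.

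\textbf{Setup.} Since $f$ is small, $K_Y+\Delta_Y=f^*(K_X+\Delta)$, and the ampleness of $-(K_X+\Delta)$ forces every curve $C\subset Y$ with $(K_Y+\Delta_Y)\cdot C=0$ to be $f$-exceptional (otherwise $f_*C\neq 0$ would intersect $K_X+\Delta$ negatively, by Kleiman). In particular, every class in $F$ is represented by $f$-contracted curves. Corollary \ref{prop:mds} moreover gives that $\ol\NE_1(Y)$ is rational polyhedral and any MMP for any divisor can be run on $Y$.

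\textbf{Part (1).} To construct $g:Y\to Z$, I would first pass to a klt log Fano perturbation $(Y,\Delta')$ with $\Delta'\sim_\bR\Delta_Y+\varepsilon H$ for some ample $H$ and small $\varepsilon>0$, using Proposition \ref{prop:kltdlt} and the argument of Lemma \ref{lem:kltmds}. For such a pair every extremal face of $\ol\NE_1(Y)$ is $(K_Y+\Delta')$-negative, so $F$ is contractible by the Contraction Theorem, producing $g:Y\to Z$. Since every $F$-curve is $f$-contracted, the rigidity lemma yields a unique factorization $f=h\circ g$ with $h:Z\to X$. Smallness of $g$ is then forced by smallness of $f$: a $g$-exceptional divisor $E\subset Y$ would satisfy $\dim f(E)=\dim h(g(E))<\dim E$, contradicting that $f$ contracts no divisors.

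\textbf{Part (2).} The $D$-log flop of $F$ is by definition the $(K_Y+\Delta_Y+D)$-flip of $g$. On any $C$ with $[C]\in F\setminus\{0\}$, $(K_Y+\Delta_Y+D)\cdot C=D\cdot C<0$, so $-(K_Y+\Delta_Y+D)$ is $g$-ample. Using Rigidity of Flips (Lemma \ref{lem:flip}) to reduce to a flip of the klt pair produced above, existence follows from Corollary \ref{cor:dltmmp}. Denote the flip $g^+:Y^+\to Z$; then $Y^+$ is $\bQ$-factorial and $(Y^+,\Delta_{Y^+})$ is dlt (Corollary \ref{cor:factorial}). To verify that $h\circ g^+:Y^+\to X$ is a $\bQ$-factorialization of $X$, I would chain three smallness assertions: $h$ is small, because an $h$-exceptional divisor on $Z$ would pull back through the small $g$ to an $f$-exceptional divisor in $Y$; $g^+$ is small by construction of the flip; hence any $(h\circ g^+)$-exceptional divisor $E^+\subset Y^+$ would produce $g^+(E^+)\subset Z$ as an $h$-exceptional divisor, a contradiction.

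\textbf{Main obstacle.} The trickiest step is the construction of $g$ in part (1): $F$ is $(K_Y+\Delta_Y)$-trivial rather than negative, so the dlt Cone Theorem does not apply to $(Y,\Delta_Y)$ directly. The remedy is to pass to a klt log Fano perturbation, under which every extremal face of $\ol\NE_1(Y)$ becomes $(K+\Delta')$-negative, and to observe that the resulting contraction depends only on the face $F\subset \ol\NE_1(Y)$ and not on the specific perturbation chosen.
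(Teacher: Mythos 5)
Your overall strategy matches the paper's: use the Mori dream space structure of $Y$ established in Corollary~\ref{prop:mds} to get the contraction and the flop, then track smallness and the factorization over $X$. The setup observation (every $(K_Y+\Delta_Y)$-trivial curve is $f$-exceptional) and the smallness chains in both parts are correct and essentially identical to the paper's argument; the factorization via rigidity corresponds to the paper's citation of \cite[Lemma 1.15(b)]{Deb01}.

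However, there is a genuine error in the re-derivation in Part~(1). You claim a ``klt log Fano perturbation $(Y,\Delta')$ with $\Delta'\sim_\bR\Delta_Y+\varepsilon H$ for some ample $H$.'' This is false and in fact points in the wrong direction. By Lemma~\ref{lem:bignef}, the divisor $-(K_Y+\Delta_Y)$ is only big and nef, with degeneracy precisely along $\ol\NE_1(Y)_{K_Y+\Delta_Y=0}$. Adding an ample $\varepsilon H$ on $Y$ to $\Delta_Y$ makes $-(K_Y+\Delta_Y+\varepsilon H)$ strictly negative on $F$: for $\gamma\in F\setminus\{0\}$ one has $(K_Y+\Delta_Y+\varepsilon H)\cdot\gamma=\varepsilon H\cdot\gamma>0$, so $F$ becomes a $(K_Y+\Delta')$-\emph{positive} face, the Contraction Theorem no longer contracts it, and your claim that ``every extremal face of $\ol\NE_1(Y)$ is $(K_Y+\Delta')$-negative'' fails. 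The correct perturbation is the one built in the proof of Lemma~\ref{lem:kltmds}: a big (necessarily non-ample) divisor $B$ chosen negative on $\ol\NE_1(Y)_{K_Y+\Delta_Y=0}$ and positive on $\ol\NM_1(Y)$, for which $-(K_Y+\Delta_Y'+\varepsilon B)$ is ample. In Part~(2) you again have the right conclusion but the wrong citations: Lemma~\ref{lem:flip} treats extremal rays, not arbitrary faces, and Corollary~\ref{cor:dltmmp} gives flips of extremal-ray contractions, not contractions of higher-dimensional faces. Both missteps are avoided, as the paper does, by simply invoking Corollary~\ref{prop:mds}, which already packages the Mori dream space consequence that the contraction of any face and its $D$-flop exist on $Y$.
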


\begin{proof}
By Corollary \ref{prop:mds} the minimal model program for the pair $(Y,D)$ is well-defined, in particular the contraction $g:Y\to Z$ of $F$ exists. 
   To prove that $g$ is small, we note that the map $f:Y\to X$ is the contraction of the extremal face $G:=\ol\NE_1(Y)\cap (K_Y+\Delta_Y)^\perp$. Indeed, it is easy to see that a curve $C$ is contracted by $f$ iff it intersects $K_Y+\Delta_Y$ trivially. Since this is a small contraction and $F\subset G$ is a subface, any curve that is contracted by $g$ is also contracted by $f$. Therefore, the exceptional set of $g$ has codimension at least two, hence $g$ is a small contraction.
It remains to show that $g$ factorizes $f$. We have already seen that $f$ contracts each fiber of $g$. Thus the assertion follows immediately from \cite[Lemma 1.15(b)]{Deb01}. This implies (\ref{lem:flop1}).

Item (\ref{lem:flop2}) is an immediate consequence of Corollary \ref{prop:mds},  and is visualized in the following commuting diagram.
\[
\begin{xy}
  \xymatrix{
    Y\ar@{ -->}[rr]^{D-\text{log flop}} \ar[rd]^g\ar[rdd]_f && Y^+\ar[ld]_{g^+}\ar[ldd]^{f^+}\\
& Z\ar[d]&\\
& X &
}
\end{xy}
\]
The map $f^+$ is the new $\bQ$-factorialization which is obtained by the $D$-log flop.
\end{proof}

We finally come to the main result of this section. Roughly speaking, the following proposition asserts that for any effective Weil-divisor $D$ on $X$, there exists a $\bQ$-factorialization $f:Y\to X$ such that $(f^{-1}_*D)^\perp$ is in a sufficiently general position relative to the moving cone $\ol\NM_1(Y)$.

\begin{prop}\label{prop:transversal}
Let $(X,\Delta)$ be a dlt pair with $-(K_X+\Delta)$ ample, and let $D\neq 0$ be an effective $\bR$-Weil-divisor on $X$. Then there exists a $\bQ$-factorialization $(Y,\Delta_Y)$ such that the cone $\ol\NE_1(Y)_{K_Y+\Delta_Y=0}+\ol\NM_1(Y)$ has a $(K_Y+\Delta_Y)$-negative exposed ray which is not contained in $D_Y^\perp$, where $D_Y$ is the strict transform of $D$.
\end{prop}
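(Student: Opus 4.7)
The plan is to pick \emph{any} $\bQ$-factorialization $f: Y \to X$ (which exists by Corollary \ref{cor:factorial}) and verify that it already satisfies the required property. Since $f$ has no exceptional divisors, the strict transform $D_Y := f^{-1}_*D$ is effective and non-zero; intersecting with $H^{n-1}$ for an ample divisor $H$ on $Y$ gives $D_Y \cdot H^{n-1} > 0$, so $[D_Y] \neq 0$ in $N^1(Y)$.

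By Lemma \ref{lem:bignef} the divisor $-(K_Y+\Delta_Y)$ is big and nef, and Corollary \ref{prop:mds} then implies that both $\ol\NM_1(Y)$ and $\ol\NE_1(Y)$ are rational polyhedra. Hence the cone $C_Y := \ol\NE_1(Y)_{K_Y+\Delta_Y=0} + \ol\NM_1(Y)$ is itself polyhedral, and every extremal ray is automatically exposed. A standard supporting-hyperplane argument shows that each extremal ray of the Minkowski sum $C_Y$ is already an extremal ray of one of the two summands. Since extremal rays of $\ol\NE_1(Y)_{K_Y+\Delta_Y=0}$ pair trivially with $K_Y+\Delta_Y$, whereas by BDPP duality \cite[Theorem 2.2]{BDPP04} combined with bigness of $-(K_Y+\Delta_Y)$ every non-zero class in $\ol\NM_1(Y)$ pairs strictly negatively with $K_Y+\Delta_Y$, the $(K_Y+\Delta_Y)$-negative extremal rays of $C_Y$ coincide with the extremal rays of the moving cone $\ol\NM_1(Y)$.

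If every such extremal ray were contained in the hyperplane $D_Y^\perp$, then the polyhedral cone $\ol\NM_1(Y)$ itself would lie in $D_Y^\perp$, so $D_Y \cdot \gamma = 0$ for every moving $\gamma$. By BDPP duality, both $D_Y$ and $-D_Y$ would then be pseudo-effective, and pointedness of the pseudo-effective cone on the projective variety $Y$ would force $[D_Y] = 0$, contradicting the first paragraph. The main technical point worth careful justification is the Minkowski-sum decomposition of extremal rays; this is a routine convexity argument, and in our setup is made transparent by the bigness of $-(K_Y+\Delta_Y)$, which ensures the hyperplane $(K_Y+\Delta_Y)^\perp$ cleanly separates the two summands of $C_Y$. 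The log flops developed earlier in the section are not strictly required for this proposition, but remain useful for the flexibility they give in arranging specific $\bQ$-factorializations for the applications in Section \ref{sec:isotrivial}.
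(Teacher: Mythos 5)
The gap is at the assertion that ``the $(K_Y+\Delta_Y)$-negative extremal rays of $C_Y$ coincide with the extremal rays of the moving cone $\ol\NM_1(Y)$.'' Your Minkowski-sum argument proves only one inclusion: every $(K_Y+\Delta_Y)$-negative extremal ray of $C_Y$ is an extremal ray of $\ol\NM_1(Y)$. The reverse inclusion --- that every extremal ray of $\ol\NM_1(Y)$ remains extremal in the larger cone $C_Y$ --- is not a consequence of the ``clean separation'' of the summands by $(K_Y+\Delta_Y)^\perp$; that separation only says the two summands meet at the origin, and it does not prevent an extremal ray of one summand from becoming an interior direction of the sum. Without the reverse inclusion, the final contradiction collapses: from ``all $(K_Y+\Delta_Y)$-negative extremal rays of $C_Y$ lie in $D_Y^\perp$'' you can no longer deduce $\ol\NM_1(Y)\subset D_Y^\perp$, since some extremal rays of $\ol\NM_1(Y)$ may simply have been absorbed by the sum.

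Concretely, an exposed ray $R$ of $\ol\NM_1(Y)$ cut out by a pseudo-effective divisor $D_R$ survives as an extremal ray of $C_Y$ only when $D_R$ is in addition non-negative on the face $\ol\NE_1(Y)_{K_Y+\Delta_Y=0}$ --- this is the content of Lemma~\ref{lem:extremal} --- which amounts to $D_R$ being nef relative to $f\colon Y\to X$. An arbitrary $\bQ$-factorialization has no reason to satisfy this. That is exactly why the paper, after choosing an exposed ray $R_0\subset\ol\NM_1(Y_0)$ not lying in $D_0^\perp$ together with a pseudo-effective $D_{R_0}$ cutting it out, runs a relative minimal model program for $(Y_0,\Delta_0+D_{R_0})$ over $X$: by Lemma~\ref{lem:flop} each step is a log flop between $\bQ$-factorializations, and termination yields a model on which $K_Y+\Delta_Y+D_R$ is $f$-nef, supplying precisely the missing positivity required by Lemma~\ref{lem:extremal}. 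The log flops you set aside as ``not strictly required'' are in fact the crux of the argument.
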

The proof of Proposition \ref{prop:transversal} is quite long, and will be given in the following two Sections \ref{sec:prep} and \ref{sec:prooftrans}. 
\subsubsection{Preparation for the proof of Proposition \ref*{prop:transversal}}\label{sec:prep}
The proof of the Proposition consists of the following steps:
\begin{enumerate}
  \item Use log flops to construct the $\bQ$-factorialization, and
  \item prove that the $\bQ$-factorialization satisfies Proposition \ref{prop:transversal}.
\end{enumerate}
Since  the second part includes some tedious but not very challenging computations,  we divide these computations into the following two lemmas. 
The first lemma provides a criterion to decide whether a given ray in a cone is extremal, and can be formulated in terms of convex geometry, the second one analyzes the image of exposed moving rays via flips.
\begin{lem}[Criterion of extremeness]\label{lem:extremal}
  Let $V$ be a finite dimensional real vector space, and let $\sC^1,\sC^2\subset V$ be two closed, convex cones. Let $\alpha\in V^\vee$ be a linear form and $R\subset\sC^1$ a ray such that the following conditions hold.
\begin{itemize}
\item $R= \sC^1_{\alpha=0}$, and $\sC^1\subset\{\alpha\geq 0\}$,
\item $\sC^2\subset \{\alpha\geq 0\}$, and 
\item $R\not\subset\sC^2$ and $(-R)\not\subset\sC^2$.
\end{itemize}
Then $R$ is an extremal ray of $\sC^1+\sC^2$.
\end{lem}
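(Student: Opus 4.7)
The plan is to verify the defining property of an extremal ray directly: given $r\in R$ and any decomposition $r=u+w$ with $u,w\in \sC^1+\sC^2$, I will show that $u$ and $w$ must themselves lie on $R$.

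The main device is a double application of $\alpha$. Since $\sC^1+\sC^2\subset\{\alpha\geq 0\}$ and $\alpha(r)=0$ (because $R\subset\sC^1_{\alpha=0}$), the identity $\alpha(u)+\alpha(w)=0$ forces $\alpha(u)=\alpha(w)=0$. I would then write $u=u_1+u_2$ with $u_i\in\sC^i$ and repeat the non-negativity argument on the two summands separately to conclude $\alpha(u_1)=\alpha(u_2)=0$. Hence $u_1\in\sC^1_{\alpha=0}=R$, while $u_2$ lies in $\sD:=\sC^2\cap\{\alpha=0\}$; the analogous statements hold for the decomposition $w=w_1+w_2$.

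Next, I would fix a generator $v$ of $R$ and translate the third hypothesis, via closure of $\sC^2$ under multiplication with non-negative scalars, into the identity $\sC^2\cap\bR v=\{0\}$: indeed a nonzero element of $\sC^2\cap\bR_{\geq 0}v$ (resp.\ $\sC^2\cap\bR_{\leq 0}v$) would upon non-negative rescaling pull in the whole of $R$ (resp.\ $-R$), contradicting the hypothesis. Writing $u_1=pv$, $w_1=qv$, $r=sv$ with $p,q,s\geq 0$, the relation $r=u+w$ rearranges to $u_2+w_2=(s-p-q)v$, an element that lies both in $\bR v$ and in $\sD\subset \sC^2$, and therefore vanishes.

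The final step is to deduce $u_2=w_2=0$ from $u_2=-w_2$ with $u_2,w_2\in\sC^2$; this uses pointedness of $\sC^2$, which is implicit in the intended application, where $\sC^2$ will be a subcone of the Mori cone of a projective variety. Once $u_2=w_2=0$ is secured, the decompositions collapse to $u=u_1\in R$ and $w=w_1\in R$, and extremality of $R$ follows. The whole argument is essentially bookkeeping around $\alpha$; I do not expect any genuine obstacle beyond keeping the two applications of $\alpha$ straight, together with the final appeal to pointedness of $\sC^2$.
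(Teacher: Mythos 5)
Your proof is correct and follows essentially the same route as the paper's: both arguments use the supporting functional $\alpha$ to cut out the face $\sD := (\sC^1+\sC^2)_{\alpha=0} = \sC^1_{\alpha=0}+\sC^2_{\alpha=0} = R + \sC^2_{\alpha=0}$, and then verify that $R$ is extremal inside $\sD$; the paper phrases the reduction via transitivity of the face relation, while you check the defining property element by element, but the content is identical. What is worth emphasizing is that you have correctly isolated the one non-trivial step, the passage from $u_2 = -w_2$ to $u_2 = w_2 = 0$, and noted that it requires $\sC^2$ to be pointed. This is not a cosmetic remark: the paper's proof passes over it silently, and the lemma as stated is actually false if $\sC^2$ contains a line inside $\{\alpha = 0\}$ transverse to $R$. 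For instance, take $V = \bR^3$, $\alpha = z$, $R = \bR_{\geq 0}(1,0,0)$, $\sC^1$ the cone spanned by $(1,0,0)$ and $(0,\pm 1,1)$, and $\sC^2$ the full $y$-axis; then $(1,0,0) = (1,1,0) + (0,-1,0)$ with both summands in $\sC^1 + \sC^2$ but neither in $R$, so $R$ is not extremal. The hypothesis $\sC^2 \cap (-\sC^2) = \{0\}$ should therefore be added to the statement; it holds automatically in the application because there $\sC^2 \subset \ol\NE_1(Y)$ with $Y$ projective, hence pointed.
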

\begin{proof}
 Observe that the set $\sD:=(\sC^1+\sC^2)_{\alpha=0}$ is an extremal face of $\sC^1+\sC^2$. Therefore, the face $\sD$ decomposes into 
\[
\sD= \sC^{1}_{\alpha=0} + \sC^{2}_{\alpha=0}=R+\sC^{2}_{\alpha=0}.
\]
Since $R\not\subset\sC_2$ and $(-R)\not\subset\sC^2$, it follows that $R$ is an extremal ray of $\sD$. To finish the proof, recall that being extremal is a transitive property, i.e., since $R$ is extremal in $\sD$  and $\sD$ is extremal in $\sC^1+\sC^2$, the ray $R$ is also extremal in $\sC^1+\sC^2$, as required.
\end{proof}

\begin{rem}
The ray $R\subset \sC^1+\sC^2$ is not necessarily exposed.  
\end{rem}

\begin{lem}[Flips of exposed rays]\label{lem:pfexposed}
Let $X,Y$ be $\bQ$-factorial normal projective varieties, and let 
$\varphi:X\rto Y$ be a birational map which is an isomorphism in codimension one.
Let $F\subset\ol\NM_1(X)$ be an exposed face, cut out by a pseudo-effective $\bR$-divisor $D$.
Then the image $\varphi_*(F)$ of $F$ via the numerical pushforward of curves is an exposed face of $\ol\NM_1(Y)$ which is cut out by $\varphi_*(D)$. 
\end{lem}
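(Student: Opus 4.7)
The strategy is to exploit the fact that, since $\varphi$ is an isomorphism in codimension one, both $\varphi^*$ and $\varphi_*$ are mutually inverse isomorphisms at the level of divisor classes, and these operations preserve pseudo-effectivity. From this I can deduce that $\varphi_*\colon N_1(X)\to N_1(Y)$ restricts to an isomorphism of moving cones, and then the projection formula together with $\varphi^*\varphi_* = \mathrm{id}$ identifies $\varphi_*F$ with $\ol\NM_1(Y)\cap(\varphi_*D)^\perp$.

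First, I would verify that $\varphi_*$ sends pseudo-effective classes to pseudo-effective classes. Since $\varphi$ is an isomorphism in codimension one, the pushforward of a prime Weil divisor is just its strict transform, so effectivity of an $\bR$-divisor class is preserved, and by taking limits the same holds for pseudo-effective classes. The analogous statement for $\varphi^*$ follows by applying the same argument to $\varphi^{-1}$. In particular, $\varphi_* D$ is pseudo-effective.

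Second, I would show that $\varphi_*$ maps $\ol\NM_1(X)$ into $\ol\NM_1(Y)$. By the duality theorem of Boucksom--Demailly--Paun--Peternell \cite[Theorem~2.2]{BDPP04}, the moving cone is dual to the pseudo-effective cone. Given $\gamma\in\ol\NM_1(X)$ and any pseudo-effective class $E\in N^1(Y)$, the projection formula gives
\[
\varphi_*\gamma\cdot E \;=\; \gamma\cdot\varphi^* E \;\geq\; 0,
\]
since $\varphi^*E$ is pseudo-effective by the previous step. Running the same argument for $\varphi^{-1}$ provides the inverse map, so $\varphi_*$ restricts to an isomorphism $\ol\NM_1(X)\stackrel{\sim}{\to}\ol\NM_1(Y)$ of convex cones.

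Finally, the projection formula together with $\varphi^*\varphi_*D = D$ (which holds because $\varphi^*$ and $\varphi_*$ are mutually inverse on divisor classes for a small map) yields
\[
\varphi_*\gamma\cdot\varphi_*D \;=\; \gamma\cdot\varphi^*\varphi_*D \;=\; \gamma\cdot D
\]
for every $\gamma\in N_1(X)$. Consequently $\gamma\in F = \ol\NM_1(X)\cap D^\perp$ if and only if $\varphi_*\gamma\in\ol\NM_1(Y)\cap(\varphi_*D)^\perp$, and using the isomorphism established above this gives $\varphi_*F = \ol\NM_1(Y)\cap(\varphi_*D)^\perp$. Since $\varphi_*D$ is pseudo-effective and hence non-negative on $\ol\NM_1(Y)$, this displays $\varphi_*F$ as an exposed face cut out by $\varphi_*D$, as asserted. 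The main subtlety is the careful verification that smallness of $\varphi$ forces both the invertibility $\varphi^*\varphi_* = \mathrm{id}$ on $N^1$ and the preservation of pseudo-effectivity; both reduce to the elementary observation that, in the absence of exceptional divisors, pushforward and pullback of prime divisors are just strict transforms.
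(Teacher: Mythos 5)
Your proof is correct and follows essentially the same route as the paper: both exploit that a small birational map induces mutually inverse pushforward/pullback isomorphisms on $N^1$ preserving $\ol\NE^1$, use the BDPP duality \cite[Theorem~2.2]{BDPP04} to transfer this to an isomorphism of moving cones, and conclude with the projection formula. You have simply filled in the ``straightforward computations'' that the paper explicitly omits.
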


\begin{proof}
The assumptions imply that the vector spaces $N^1(X)_\bR$ and $N^1(Y)_\bR$ are isomorphic via the pullback and pushforward of divisors. Moreover, the restriction of the pushforward map to $\ol\NE^1(X)$ gives a bijection between the pseudo-effective cones $\ol\NE^1(X)$ and $\ol\NE^1(Y)$. By duality, the numerical pushforward and pullback of curve classes yields an isomorphism between $N_1(X)_\bR$ and $N_1(Y)_\bR$, and by \cite[Theorem 2.2]{BDPP04}, a bijection between $\ol\NM_1(X)$ and $\ol\NM_1(Y)$, in particular $\varphi_*(F)\subset\ol\NM_1(Y)$.
Since the divisor $D$ is pseudo-effective, its pushforward $\varphi_*(D)$ is pseudo-effective, as well.

It remains to prove that the equality $\varphi_*(D)^\perp\cap\ol\NM_1(Y)=\varphi_*(F)$ holds. This follows easily from the projection formula and the fact that pushforward and pullback are mutually inverse bijections. These computations are straightforward, thus we omit them.
\end{proof}
\begin{rem}
The lemma is also true for extremal faces, but becomes false if the map is not an isomorphism in codimension one, e.g., if $\varphi$ is a divisorial contraction.
\end{rem}

\subsubsection{Proof of Proposition \ref*{prop:transversal}}\label{sec:prooftrans}
  We start with an arbitrary $\bQ$-factorialization $f_0:Y_0\to X$. Set $\Delta_0:=\Delta_{Y_0}$, and let $D_0:=(f_0^{-1})_*D$ be the strict transform of the effective Weil divisor $D$ on $X$. Let $R_0$ be a $(K_{Y_0}+\Delta_{Y_0})$-negative extremal ray of the moving cone $\ol\NM_1(Y_0)$ which is not contained in $D_0^\perp$. By Corollary \ref{prop:mds}, the cone $\ol\NM_1(Y_0)$ is polyhedral, therefore $R_0$ is exposed and there is a pseudo-effective $\bR$-divisor $D_{R_0}$ such that 
\[
R_0=\ol\NM_1(Y_0)_{D_{R_0}=0}.
\]
Because of Corollary \ref{prop:mds} we can run the relative minimal model program for the pair $(Y_0,\Delta_0+D_{R_0})$ over $X$.  
Observe that this minimal model program only involves log flops and yields by Lemma \ref{lem:flop} a sequence of $\bQ$-factorializations of $X$. Because of Corollary \ref{prop:mds} we eventually obtain a minimal model over $X$ which is expressed in the following commutative diagram
\[
\begin{xy}
\xymatrix{
Y_0 \ar[rd]_{f_0} \ar@{ -->}[rr]^\varphi && Y\ar[ld]^f\\
& X, &
}
\end{xy}
\]
moreover the divisor $K_Y+\varphi_*(D_{R_0})+\varphi_*(\Delta_0)$ is $f$-nef. 

This finishes the construction of the $\bQ$-factorialization, and it remains to show that $Y$ has the required properties. To this end, we first observe that $Y_0,Y$, and $\varphi$ satisfy the conditions of Lemma \ref{lem:pfexposed}, hence the ray $R:=\varphi_*(R_0)$ is an exposed ray of $\ol\NM_1(Y)$, cut out by $D_R:=\varphi_*(D_{R_0})$. Moreover, since $K_Y+\varphi_*(\Delta_0)+D_R$ is $f$-nef and any $K_Y+\varphi_*(\Delta_0)$-trivial curve is contracted by $f$, we obtain the inclusion 
\[
\ol\NE_1(Y)_{K_Y+\varphi_*(\Delta_0)=0}\subset \{D_R\geq 0\}. 
\]
By Lemma \ref{lem:bignef}, the divisor  $K_Y+\varphi_*(\Delta_0)$ is big, thus $\ol\NE_1(Y)_{K_Y+\varphi_*(\Delta_0)=0}\cap\ol\NM_1(Y)=0$, and Lemma \ref{lem:extremal} applies. Altogether, the ray $R$ is an extremal ray of $\ol\NE_1(Y)_{K_Y+\varphi_*(\Delta_0)=0}+\ol\NM_1(Y)$. Since this cone is polyhedral by Corollary \ref{prop:mds}, the ray $R$ is even an exposed ray.

To finish the proof, we have to show that $R$ is not contained in the hyperplane $D_Y^\perp$, where $D_Y$ is the strict transform of $D$. Since $\varphi$ is an isomorphism in codimension one, the divisor $D_Y$ is also given by the pushforward of $D_0$ via $\varphi$. The projection formula immediately implies that $D_Y$ intersects any non-zero class $\gamma\in R$ positively, and the proof is finished.
\qed

\section{Families over log Fano varieties}\label{sec:isotrivial}
In this section we will prove the Isotriviality Theorem \ref{thm:isotrivial} by induction over the dimension. 
As a part of the induction we prove Theorem \ref{thm:moduli}, which is stated below. 
Assuming that Theorem \ref{thm:moduli} holds in dimension $n$, we first show that the family is necessarily isotrivial on certain moving curves, namely the curves we constructed in Theorem \ref{thm:mcone}. 
Next we show that for any proper algebraic subset $Z$ of $X$ there exists a moving curve that is not contained in $Z$ and intersects $Z$ properly. On this curve the family is isotrivial. This finally finishes the proof of the Isotriviality Theorem \ref{thm:isotrivial} for $n$-dimensional varieties. 

Assuming that Theorem \ref{thm:isotrivial} holds in dimension $n$ we will prove 
Theorem \ref{thm:moduli} in the $(n+1)$-dimensional case. This finally finishes the proof of both theorems in arbitrary dimension.

\subsection{A result of Kebekus and Kov{\'a}cs}
Given a smooth projective family of canonically polarized varieties, it is proved in \cite[Theorem 1.2]{KK10} that any run of the minimal model program for the base terminates with a Kodaira or Mori fiber space that factors the moduli map birationally, if the dimension of the base is less than or equal to three. A proof for surfaces can be found in \cite{KK08}. Since we discuss log Fano varieties, we will focus on the case of negative Kodaira-Iitaka dimension. As part of the induction we show that this result holds in arbitrary dimension. 

\begin{thm}[Moduli and the minimal model program, {\cite[Theorem 1.2]{KK10}}]\label{thm:moduli}
Let $(X,\Delta)$ be a $\bQ$-factorial dlt pair of negative Kodaira-Iitaka-dimension. Let $T\subset X$ be a subvariety of $\codim_X(T)\geq 2$ such that $X\setminus(T\cup\Supp\lrd\Delta\rrd)$ is smooth, and let $\mu: X\setminus(T\cup\Supp(\lrd\Delta\rrd))\to\fM$ be a map to the coarse moduli space of canonically polarized manifolds  which is induced  by a smooth projective family over $X\setminus(T\cup\Supp\lrd\Delta\rrd)$.

Then any terminating minimal model program $\lambda:X\rto X'$ leads to a Mori fiber space $\pi:X'\to B$ which factors the moduli map $\mu$ via $\pi\circ\lambda$ birationally. In other words, there exists a rational map $\nu: B\rto\fM$ such that the following diagram commutes.
\[
\begin{xy}
\xymatrix{
X \ar@{ -->}[rr]^\lambda \ar@{ -->}[d]_\mu  && X'\ar[d]^\pi \\
\fM && B.\ar@{ -->}_\nu[ll]
}
\end{xy}
\]
\end{thm}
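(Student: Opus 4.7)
The plan is to prove the theorem by induction on $n=\dim X$, using the Isotriviality Theorem \ref{thm:isotrivial} in dimensions $\leq n$ as the inductive hypothesis, so that Theorem \ref{thm:moduli} is to be established in dimension $n+1$. Since the Kodaira-Iitaka dimension of $K_X+\Delta$ is $-\infty$, the divisor $K_X+\Delta$ is not pseudo-effective, so by Theorem \ref{thm:dltmmp} every terminating MMP ends with a Mori fiber space $\pi:X'\to B$ with $\dim B<\dim X'$. Writing $\lambda$ as a composition of elementary steps $\varphi_1,\ldots,\varphi_m$, each a flip or a divisorial contraction, I would first show that $\mu$ descends birationally through each $\varphi_i$, and then that the resulting rational map on $X'$ factors through $\pi$.

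For flips the descent is immediate, as flips are isomorphisms in codimension one and $\mu$ is rational. The substantial case is a divisorial contraction $\varphi_i:X_{i-1}\to X_i$ with exceptional divisor $E_i$. Here I would mimic the strategy of \cite{KK10}: assuming that the restricted moduli map on the general fiber of $\varphi_i|_{E_i}$ is non-constant, the Viehweg-Zuo construction (adapted to the dlt setting by means of reflexive differentials) produces an invertible subsheaf of a suitable reflexive tensor power of $\Omega^{[1]}_{X_{i-1}}(\log\lrd\Delta_{i-1}\rrd)$ whose Kodaira-Iitaka dimension dominates the variation of the family along the contracted fibers. The Bogomolov-Sommese vanishing for lc pairs, \cite[Theorem 7.2]{GKKP10}, bounds the Kodaira-Iitaka dimension of any such subsheaf; combined with the fact that contracted fibers are covered by rational curves, this forces the variation to vanish. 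Consequently $\mu$ descends to a rational map on $X_i$, and inductively to a rational map on $X'$.

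To complete the argument for the final Mori fiber space $\pi:X'\to B$, I would pick a general fiber $F$. By adjunction and the $\pi$-ampleness of $-(K_{X'}+\Delta')$, the pair $(F,\Delta'|_F)$ is a dlt pair with $-(K_F+\Delta'|_F)$ ample, hence a log Fano pair of dimension at most $n$. A Bertini-type argument ensures that, after removing the restriction of the boundary, the remaining bad locus on a general $F$ has codimension $\geq 2$ in $F$. The restricted smooth family thus satisfies the hypotheses of Theorem \ref{thm:isotrivial} in dimension $\leq n$, so by the induction hypothesis it is isotrivial. Therefore $\mu\circ\lambda^{-1}$ is constant on the general fiber of $\pi$, and descends to the desired rational map $\nu:B\rto\fM$.

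The hard part will be the divisorial-contraction step. Setting up Viehweg-Zuo sheaves correctly in the dlt setting demands care with the logarithmic boundary $\lrd\Delta\rrd$, with the non-smooth locus $T$, and with the passage to reflexive differentials in the style of \cite{GKKP10}; only once these technical ingredients are in place does the Bogomolov-Sommese vanishing apply directly. By contrast, the final reduction to the Isotriviality Theorem on fibers of the Mori fiber space is conceptually clean, and should follow from standard Bertini arguments together with adjunction for dlt pairs.
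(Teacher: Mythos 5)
Your proof takes a genuinely different, and more complicated, route than the paper, and it contains gaps in exactly the places where you add complexity.

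The paper's proof of the inductive step (Theorem~\ref{thm:isotrivial}$_n$ implies Theorem~\ref{thm:moduli}$_{n+1}$) does not track $\mu$ through individual MMP steps at all, and in particular never invokes the Viehweg--Zuo construction at a divisorial contraction. Instead it simply observes that $\lambda^{-1}\colon X'\rto X$ is a rational map whose indeterminacy locus has codimension $\geq 2$ (since $X'$ is normal), together with the fact that images of the exceptional divisors and of $T$ also have codimension $\geq 2$ in $X'$. Letting $T'$ be the union of these loci, the family pulls back along $\lambda^{-1}$ to $X'\setminus(T'\cup\Supp\lrd\Delta'\rrd)$, and $\mu':=\mu\circ\lambda^{-1}$ is automatically a well-defined rational map with $\mu=\mu'\circ\lambda$ as rational maps. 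The whole question thus reduces to showing $\mu'$ is constant on a general fiber of $\pi$, which follows from Theorem~\ref{thm:isotrivial}$_n$ applied to $(F,\Delta'|_F)$ when $\dim B\geq 1$, and from Lemma~\ref{lem:picard} when $B$ is a point. Your step-by-step descent through the MMP is therefore unnecessary work: there is nothing to prove about the exceptional locus of a divisorial contraction.

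Beyond being unnecessary, your divisorial-contraction argument has a real gap. You assert that because contracted fibers are covered by rational curves, the variation along them must vanish. But a $(K_X+\Delta)$-negative rational curve may intersect $\lrd\Delta\rrd$ in three or more points, and a smooth family of canonically polarized varieties over $\bP^1$ minus three or more points need not be isotrivial. The paper itself flags this issue in the introduction, noting that it is open whether log Fano varieties are rationally connected by curves meeting $\Delta$ in at most two points; this is precisely why a direct rational-curve argument is avoided throughout. Your appeal to Viehweg--Zuo sheaves and Bogomolov--Sommese does not obviously repair this, and in the paper that machinery enters only in the Picard-number-one case (Theorem~\ref{thm:picard} and Lemma~\ref{lem:picard}), not at intermediate MMP steps. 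Finally, your reduction to a general fiber tacitly assumes $\dim B\geq 1$; when $\rho(X')=1$ the fiber is $X'$ itself and has dimension $n+1$, so Theorem~\ref{thm:isotrivial}$_n$ does not apply, and you must separately invoke Lemma~\ref{lem:picard} as the paper does.
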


\subsection{Proof of Theorems \ref*{thm:isotrivial} and \ref*{thm:moduli}}
\subsubsection{General strategy and setup}

The proof of the Theorems~\ref{thm:isotrivial} and \ref{thm:moduli} is by
induction on the dimension. For arbitrary $x$ the notation \emph{Theorem
  $x_n$} stands for ``Theorem $x$ in dimension at most $n$''.  The proof is
given in the following three steps.

\subsubsection*{Step 1: The case where the Picard number of $X$ is one} 

In this case, both theorems assert that a smooth family of canonically
polarized varieties is isotrivial over a logarithmic log Fano dlt pair with
Picard number one. A proof of this case is given in \cite{KK10} if $\dim X\leq
3$. It can be generalized to arbitrary dimension, since the Bogomolov-Sommese
vanishing for lc pairs holds in arbitrary dimensions, see \cite{GKKP10}. Note
that this case implies both theorems if $X$ is a curve.

\subsubsection*{Step 2: Theorem~\ref*{thm:moduli}$_n$ implies Theorem \ref*{thm:isotrivial}$_n$}

Assuming Theorem \ref{thm:moduli}$_n$ it follows from Proposition
\ref{prop:transversal} and Theorem \ref{thm:mcone} that the family is
isotrivial on ``sufficiently many'' moving curves. This implies Theorem
\ref{thm:isotrivial}$_n$.

\subsubsection*{Step 3: Theorem~\ref*{thm:isotrivial}$_n$ implies Theorem~\ref*{thm:moduli}$_{n+1}$}

Finally, we can apply Theorem \ref{thm:isotrivial}$_n$ to the general fiber of
a Mori fiber space, which in turn implies Theorem \ref{thm:moduli}$_{n+1}$.

\subsubsection{The case where the Picard number of $X$ is one}
To show that Theorem \ref{thm:moduli} holds if the Picard number is one, we have to use certain invertible sheaves $\sA\subset \Sym^n\Omega_X^1(\log \Delta)$ which were introduced by Viehweg and Zuo in \cite{VZ02}. These \emph{Viehweg-Zuo sheaves} are also discussed in \cite[Chapter 5]{KK10}. 

\begin{thm}[{\cite[Theorem 6.1]{KK10}}]\label{thm:picard}
  Let $(Z,\Delta)$ be log canonical logarithmic pair where $Z$ is projective
  $\bQ$-factorial. Assume that there exists a Viehweg-Zuo sheaf $\sA$ of
  positive Kodaira-Iitaka dimension, and that the divisor $-(K_Z+\Delta)$ is
  nef. Then the Picard number of $Z$ is greater than one.
\end{thm}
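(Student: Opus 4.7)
The plan is to argue by contradiction, closely following the proof scheme of \cite[Theorem 6.1]{KK10} and using the Bogomolov–Sommese vanishing for lc pairs of \cite{GKKP10} in place of the low-dimensional version available to the authors of \cite{KK10}. Suppose that $\rho(Z)=1$. Because $Z$ is projective and $\bQ$-factorial with Picard number one, its nef cone is a single ray; since $-(K_Z+\Delta)$ is nef by hypothesis, every pseudo-effective class on $Z$ is a non-negative real multiple of $-(K_Z+\Delta)$. The subcase $-(K_Z+\Delta)\equiv 0$ may be dismissed at once: in that situation every line bundle on $Z$ is numerically trivial, so in particular no invertible subsheaf of $\Sym^n\Omega^{[1]}_Z(\log\lrd\Delta\rrd)$ can have positive Kodaira–Iitaka dimension, contradicting the hypothesis on $\sA$. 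Hence we may assume that $-(K_Z+\Delta)$ is ample.

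Next I use the hypothesis that the Viehweg–Zuo sheaf $\sA$ has $\kappa(\sA)\geq 1$. Under our standing assumption $\rho(Z)=1$ together with the ampleness of $-(K_Z+\Delta)$, the first Chern class $c_1(\sA)$ lies in the ample ray and is thus a positive rational multiple of $-(K_Z+\Delta)$. Consequently $\sA$ is big, so in fact $\kappa(\sA)=\dim Z$.

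The last and decisive step is to derive a contradiction from the bigness of $\sA$. The recipe is the cyclic-covering and desingularization construction of Viehweg–Zuo recalled in \cite[Chapter 5]{KK10}: applied to the inclusion $\sA\hookrightarrow\Sym^n\Omega^{[1]}_Z(\log\lrd\Delta\rrd)$, it produces a finite cover $\tilde Z\to Z$ such that $(\tilde Z,\tilde\Delta)$ is an lc pair and such that $\sA$ pulls back (after twisting) to an invertible subsheaf of $\Omega^{[p]}_{\tilde Z}(\log \tilde\Delta)$ for some $p<\dim\tilde Z$, while preserving positive Kodaira–Iitaka dimension. Now invoking the Bogomolov–Sommese vanishing theorem for lc pairs \cite[Theorem 7.2]{GKKP10}, the Kodaira–Iitaka dimension of this subsheaf is at most $p<\dim\tilde Z$, contradicting the preserved bigness.

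The main technical obstacle is controlling the singularities of the cyclic cover so that $(\tilde Z,\tilde\Delta)$ is genuinely lc: this is exactly the point at which the extension of Bogomolov–Sommese vanishing from the klt to the lc setting in \cite{GKKP10} becomes indispensable. In \cite{KK10} this step could only be carried out for $\dim Z\leq 3$ because the needed vanishing was not known in higher dimensions; with the lc version now available in arbitrary dimension, the construction and the subsequent contradiction go through verbatim without any dimensional restriction.
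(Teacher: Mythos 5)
Your outline has the right overall shape (contradiction, deduce that $\sA$ is big, invoke Bogomolov--Sommese for lc pairs), but there are two genuine gaps that would not survive scrutiny.

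First, the dismissal of the subcase $-(K_Z+\Delta)\equiv 0$ is wrong. Having $\rho(Z)=1$ and $-(K_Z+\Delta)\equiv 0$ does \emph{not} force every line bundle to be numerically trivial: $N^1(Z)_{\bR}$ is still one--dimensional and spanned by an ample class (think of a quintic threefold, where $K_Z\equiv 0$ and $\rho=1$). So the claimed contradiction there is spurious. More importantly, this case split is a symptom of a deeper problem: in your argument the nefness of $-(K_Z+\Delta)$ appears \emph{only} in this unnecessary case split. Once you observe that $\rho(Z)=1$ and $\kappa(\sA)>0$ already force $\sA$ to be big, you never again use the hypothesis on $-(K_Z+\Delta)$. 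But the statement is certainly false without that hypothesis (a variety of general type with $\rho=1$ can carry a Viehweg--Zuo sheaf of positive Kodaira--Iitaka dimension), so any argument that does not use it in an essential way must have a hidden gap. In the argument from \cite{KK10} that the paper invokes, the nefness of $-(K_Z+\Delta)$ is precisely what supplies the final contradiction: roughly, the bigness of $\sA$ propagates to a subsheaf of $\Omega^{[1]}_Z(\log\lrd\Delta\rrd)$, Bogomolov--Sommese forces that subsheaf to have full rank, hence $K_Z+\lrd\Delta\rrd$ to be big, and this is incompatible with $K_Z+\Delta$ being anti-nef. You never reach that step.

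Second, the Viehweg--Zuo step is both mischaracterized and internally inconsistent. You describe a cyclic cover producing an invertible $\tilde\sA\subset\Omega^{[p]}_{\tilde Z}(\log\tilde\Delta)$ ``while preserving positive Kodaira--Iitaka dimension,'' but then you appeal to ``the preserved bigness'' to contradict $\kappa(\tilde\sA)\le p<\dim\tilde Z$. Preserving positivity of $\kappa$ is far weaker than preserving bigness, and only the latter yields a contradiction with Bogomolov--Sommese; you do not argue that bigness survives the construction, and it is not clear that a cyclic covering construction of the kind you sketch does this. What actually happens in \cite[Theorem 6.1]{KK10} is not a covering trick at all: using $\rho(Z)=1$, one extracts (via the slope/Harder--Narasimhan picture, keeping everything on $Z$) a maximal destabilizing subsheaf $\sE\subset\Omega^{[1]}_Z(\log\lrd\Delta\rrd)$ with big determinant, and then applies Bogomolov--Sommese to $\det\sE\subset\Omega^{[r]}_Z(\log\lrd\Delta\rrd)$ ($r=\operatorname{rk}\sE$) to force $r=\dim Z$; the resulting bigness of $K_Z+\lrd\Delta\rrd$ then contradicts the nefness hypothesis. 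The author's observation, correctly, is that the only place \cite{KK10} used a dimension restriction was in the Bogomolov--Sommese step, and \cite[Theorem 7.2]{GKKP10} removes it. Your reconstruction replaces the slope argument with an unsubstantiated covering construction and, in the process, loses track of exactly the two hypotheses ($\rho(Z)=1$ beyond bigness, and nefness of $-(K_Z+\Delta)$) that make the argument close.
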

\begin{proof}
After replacing the old version of the Bogomolov-Sommese Vanishing Theorem \cite[Theorem 3.5]{KK10} with the new one \cite[Theorem 7.2]{GKKP10}, the proof given in \cite[Theorem 6.1]{KK10} applies verbatim for arbitrary dimension.
\end{proof}
\begin{lem}[Picard number one]\label{lem:picard}
Let $(X,\Delta)$ and $\mu$ as in Theorem \ref{thm:moduli} and assume that the the Picard number of $X$ is one.
Then $\mu$ is constant.
\end{lem}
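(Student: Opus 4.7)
The plan is to argue by contradiction using the Viehweg--Zuo machinery, reducing to the hypothesis of Theorem~\ref{thm:picard} and invoking that theorem directly.

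First, I would observe that the hypotheses force $-(K_X+\Delta)$ to be $\bR$-ample, hence nef. Indeed, since $\rho(X)=1$ the space $N^1(X)_\bR$ is one-dimensional, so every $\bR$-Cartier $\bR$-divisor class is numerically proportional to a fixed ample generator. Because $\kappa(X, K_X+\Delta)=-\infty$, the divisor $K_X+\Delta$ cannot be pseudo-effective, so the proportionality factor is strictly negative and $-(K_X+\Delta)$ is ample. This already places us in the setting required by Theorem~\ref{thm:picard}.

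Next, suppose for contradiction that $\mu$ is not constant. Then the smooth family of canonically polarized manifolds over $X^\circ := X\setminus(T\cup\Supp\lrd\Delta\rrd)$ has positive variation. The construction of Viehweg and Zuo \cite{VZ02}, as reformulated for dlt logarithmic pairs in \cite[Chapter~5]{KK10}, produces from such a family an invertible sheaf $\sA$ of positive Kodaira--Iitaka dimension together with an inclusion $\sA\hookrightarrow \Sym^n\Omega^1_{X^\circ}(\log(\Delta|_{X^\circ}))$ for some $n\gg 0$. Since $T\cup\Supp\lrd\Delta\rrd$ only meets $X$ in loci that are controlled by the dlt hypothesis, and since $T$ itself has codimension at least two, I would extend $\sA$ across $T$ as a reflexive sheaf and regard it as a Viehweg--Zuo sheaf for the pair $(X,\Delta)$ in the sense used in Theorem~\ref{thm:picard}; the Kodaira--Iitaka dimension is preserved by this reflexive extension.

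With a Viehweg--Zuo sheaf of positive Kodaira--Iitaka dimension on $(X,\Delta)$ and $-(K_X+\Delta)$ nef at hand, Theorem~\ref{thm:picard} applies directly and yields $\rho(X)>1$, contradicting the assumption $\rho(X)=1$. Hence $\mu$ must be constant.

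The main obstacle I foresee is not any single step, but rather the bookkeeping required to verify that the Viehweg--Zuo sheaf produced on $X^\circ$ genuinely descends to a sheaf on the dlt pair $(X,\Delta)$ satisfying the precise hypotheses of Theorem~\ref{thm:picard}. This depends on the extension theorems for logarithmic differentials on lc pairs from \cite{GKKP10}, together with the fact that $T$ is small in codimension, so that the inclusion $\sA\hookrightarrow \Sym^n\Omega^1_{X^\circ}(\log \Delta)$ extends uniquely to an inclusion into the reflexive hull $\Sym^{[n]}\Omega^{[1]}_X(\log \Delta)$. Once that technical point is in place, the contradiction with Theorem~\ref{thm:picard} is immediate.
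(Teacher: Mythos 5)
Your overall strategy matches the paper's: argue by contradiction, produce a Viehweg--Zuo sheaf, and invoke Theorem~\ref{thm:picard}. Your deduction that $-(K_X+\Delta)$ is ample from $\rho(X)=1$ and $\kappa(X,K_X+\Delta)=-\infty$ is correct and is also implicit in the paper. But there are two genuine gaps where you have elided constructions that the paper carries out explicitly and that cannot be dismissed as bookkeeping.

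First, you never reduce to the case where $\Delta$ is \emph{reduced}. Theorem~\ref{thm:picard} is a statement about \emph{logarithmic} pairs, and both the sheaf $\Omega^1_X(\log\Delta)$ and the notion of a Viehweg--Zuo sheaf require a reduced boundary divisor. In fact, your object $\Sym^n\Omega^1_{X^\circ}\bigl(\log(\Delta|_{X^\circ})\bigr)$ is ill-formed: on $X^\circ$ the restriction $\Delta|_{X^\circ}$ is the fractional part of $\Delta$, which is not reduced. The paper gets around this by observing that $\rho(X)=1$ forces the effective divisor $\{\Delta\}=\Delta-\lrd\Delta\rrd$ to be nef, so that $-(K_X+\lrd\Delta\rrd)=-(K_X+\Delta)+\{\Delta\}$ is still ample, and then replaces $\Delta$ by $\lrd\Delta\rrd$ throughout; the moduli map and its domain depend only on $\lrd\Delta\rrd$, so this loses nothing. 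You need this step before Theorem~\ref{thm:picard} even applies.

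Second, the Viehweg--Zuo construction \cite[Theorem 1.4]{VZ02} lives on a \emph{smooth} projective compactification with snc boundary, not on the singular $X$ or the non-compact $X^\circ$. Your proposal to ``extend $\sA$ across $T$ as a reflexive sheaf'' does not address this: the sheaf $\sA$ you want is produced on a log resolution $\pi:\tilde X\to X$, not on $X^\circ$, and the boundary on $\tilde X$ must include the entire $\pi$-exceptional divisor $E$ (chosen so that $\pi^{-1}(T)\subset E$), not just the strict transform of $\Delta$. One then descends $\sA$ from $\tilde X$ to a Viehweg--Zuo sheaf on $(X,\lrd\Delta\rrd)$ using \cite[Lemma 5.2]{KK10}, which is precisely the pushforward/reflexive-extension statement you gesture at but do not cite. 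Without passing through the resolution, the hypotheses of the cited Viehweg--Zuo theorem are not met, so the existence of $\sA$ is not established.
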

\begin{proof}
Assume that $\mu$ is not constant. Since the Picard number is one, the $\bR$-divisor $\Delta$ is nef, in particular the pair $(X,\lrd\Delta\rrd)$ is dlt log Fano. Thus we can assume without loss of generality that $\Delta$ is reduced.

Let $\pi:\tilde X\to X$ be a log resolution of $(X,\Delta)$ such that $\pi^{-1}(T)$ is contained in the $\pi$-exceptional divisor $E\in\Div(\tilde X)$. Set $\tilde\Delta:=E+\pi^{-1}_*(\Delta)$, and note that $\tilde\Delta$ is snc and $\pi_*\tilde\Delta=\Delta$. Use $\pi$ to obtain a family of positive variation over $\tilde X\setminus\Supp\tilde\Delta$. It follows from \cite[Theorem 1.4]{VZ02} that there exists a Viehweg-Zuo sheaf $\tilde\sA\subset \Sym^n\Omega_{\tilde X}^1(\log\tilde\Delta)$ with $\kappa(\tilde\sA)>0$. Apply \cite[Lemma 5.2]{KK10} to obtain a Viehweg-Zuo sheaf $\sA\subset \Sym^n\Omega_X^1(\log \Delta)$ with $\kappa(\sA)\geq \kappa(\tilde\sA)>0$.
By Theorem \ref{thm:picard} the Picard number of $X$ is greater than one, which is a contradiction.
\end{proof}
Since curves always have Picard number one, we obtain the following
\begin{cor}[Start of induction, \protect{\cite[0.2]{Kov00}}]
Theorem~\ref{thm:moduli} and Theorem~\ref{thm:isotrivial} hold in dimension one. \qed
\end{cor}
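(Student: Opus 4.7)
The plan is to reduce both statements in dimension one to the already established Lemma~\ref{lem:picard}, exploiting the fact that normal projective curves are automatically smooth, $\bQ$-factorial, and have Picard number one.

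First, I would record the dimension-one geometry. If $X$ is a one-dimensional normal projective variety, then $X$ is a smooth projective curve, so $(X,\Delta)$ is automatically $\bQ$-factorial and snc. Moreover, the Néron--Severi group of a smooth projective curve is generated by the class of any point, so $\rho(X)=1$. The MMP on a curve is degenerate: there are neither divisorial contractions nor flips (nothing to contract or flip in dimension one), so the only non-trivial output of any run of the MMP is a Mori fiber space, and the only Mori fiber space structure on $X$ is the constant morphism $\pi\colon X\to B=\{\mathrm{pt}\}$.

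Next, I would treat Theorem~\ref{thm:moduli} in dimension one. By hypothesis $(X,\Delta)$ is a $\bQ$-factorial dlt pair of negative Kodaira--Iitaka dimension, so Lemma~\ref{lem:picard} applies and the moduli map $\mu$ is constant. Any terminating MMP is just the identity $\lambda=\mathrm{id}_X$ followed by the Mori fiber space $\pi\colon X\to\{\mathrm{pt}\}$; the factorization $\mu=\nu\circ\pi\circ\lambda$ then holds tautologically by taking $\nu$ to send the point of $B$ to the (unique) value of $\mu$.

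Finally, I would handle Theorem~\ref{thm:isotrivial} in dimension one. The only extra point is to verify that the hypotheses of Theorem~\ref{thm:moduli} are satisfied, so that Lemma~\ref{lem:picard} applies: one checks that $-(K_X+\Delta)$ being $\bR$-ample forces $\deg(K_X+\Delta)<0$, hence $\kappa(X,K_X+\Delta)=-\infty$. The condition $\codim_X(T)\geq 2$ forces $T=\emptyset$, so $X\setminus(T\cup\Supp\lrd\Delta\rrd)=X\setminus\Supp\lrd\Delta\rrd$ is already smooth. Lemma~\ref{lem:picard} then gives that the induced moduli map is constant, which is exactly the statement that the family is isotrivial. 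I do not expect any serious obstacle: the whole content is that Lemma~\ref{lem:picard}, whose hard input is the Bogomolov--Sommese vanishing of \cite{GKKP10} (via Theorem~\ref{thm:picard}), does all the work, and the dimension-one case of the MMP is trivial.
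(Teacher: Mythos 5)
Your proposal is correct and follows essentially the same route as the paper: the paper's proof is exactly the one-line observation that curves have Picard number one, so Lemma~\ref{lem:picard} (the Picard number one case, which rests on the Viehweg--Zuo sheaf construction and the Bogomolov--Sommese vanishing of \cite{GKKP10}) gives the result. Your additional verifications --- that the MMP on a curve is trivial, that $\codim_X(T)\geq 2$ forces $T=\emptyset$, and that ampleness of $-(K_X+\Delta)$ gives $\deg(K_X+\Delta)<0$ hence $\kappa=-\infty$ --- are accurate and simply spell out what the paper leaves implicit.
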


\subsubsection{Theorem \ref*{thm:moduli}$_n$ implies Theorem \ref*{thm:isotrivial}$_n$}
We first use Theorem \ref{thm:moduli}$_{n}$ to show that a smooth family of canonically polarized varieties is isotrivial on certain moving curves.
\begin{prop}\label{prop:curvetriv}
Assume Theorem \ref{thm:moduli}$_n$. Let $(X,\Delta)$, $T$ and $\mu$ be as in Theorem \ref{thm:moduli}$_n$.  Let $R$ be a $(K_X+\Delta)$-negative exposed ray of the cone $\ol\NM_1(X)+\ol\NE_1(X)_{K_X+\Delta \geq 0}$. Let $H_R$ be the associated subset of the Hilbert scheme as in Theorem \ref{thm:mcone}. Then there exists a non-empty open subset $H_{R,\mu}$ of $H_R$ such that any curve $C\subset X$ that corresponds to a closed point of $H_{R,\mu}$ satisfies the following properties.
\begin{enumerate}
  \item The curve $C$ is not contained in $T\cup\Supp\lrd\Delta\rrd$, \label{item:general}
  \item the moduli map $\mu$ is constant on $C\cap \left(X\setminus (T\cup\Supp\lrd\Delta\rrd)\right)$, \label{item:constant}
  \item for any closed subset $Z\subset X$ of $\codim_X(Z)\geq 2$, there is a non-empty open subset $H^Z_{R,\mu}$ of $H_R$  such that any curve that corresponds to a closed point of $H^Z_{R,\mu}$ avoids $Z$. \label{item:codim2}
\end{enumerate}
 \end{prop}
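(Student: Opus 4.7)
The strategy is to combine Theorem~\ref{thm:mcone} with the inductive hypothesis Theorem~\ref{thm:moduli}$_n$: the former produces a Mori fiber space into whose fibres the images of curves in $H_R$ map, and the latter factors $\mu$ through this base, from which constancy of $\mu$ along such curves is immediate.

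Concretely, I would first apply Theorem~\ref{thm:mcone} to $R$ to obtain a terminating MMP $\lambda_R : X \rto X_R$ ending in a Mori fiber space $\pi_R : X_R \to B_R$, together with an open $U \subset X$ on which $\lambda_R$ restricts to an isomorphism; by construction, every curve $C$ corresponding to a closed point of $H_R$ lies in $U$, and $\lambda_R(C)$ is contained in a single fiber of $\pi_R$. Next, apply Theorem~\ref{thm:moduli}$_n$ to this MMP to obtain a rational map $\nu : B_R \rto \fM$ and a dense open subset $W \subset X$ on which the identity $\mu = \nu \circ \pi_R \circ \lambda_R$ of morphisms to $\fM$ holds. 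Since $\pi_R \circ \lambda_R$ is constant on every such $C$, the composition $\nu \circ \pi_R \circ \lambda_R$ is constant on $C \cap W$, so $\mu|_{C \cap W}$ is constant as well. Define $H_{R,\mu}$ as the intersection of $H_R$ with three open loci: curves avoiding $T$ (take $H^T_R$ from Theorem~\ref{thm:mcone}(\ref{item:codim})), curves not contained in $\Supp\lrd\Delta\rrd$, and curves meeting $W$. For $C \in H_{R,\mu}$ the open dense subset $C \cap (X \setminus (T \cup \Supp\lrd\Delta\rrd))$ of the irreducible curve $C$ carries $\mu$ as a morphism, which is constant on its dense subset $C \cap W$ and therefore constant everywhere on it; this gives (\ref{item:general}) and (\ref{item:constant}). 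Item (\ref{item:codim2}) is obtained by further intersecting with $H^Z_R$ from Theorem~\ref{thm:mcone}(\ref{item:codim}).

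The main obstacle is showing that all open conditions defining $H_{R,\mu}$ are simultaneously non-empty. This reduces to the irreducibility of $H_R$ (Theorem~\ref{thm:mcone}(a)) together with the fact that the family of curves parametrised by $H_R$ is moving, and hence dominating: such a family cannot be entirely contained in any proper closed subset of $X$, so the loci of curves not lying in $\Supp\lrd\Delta\rrd$ and of curves meeting the dense open $W$ are open and non-empty in $H_R$, and irreducibility of $H_R$ ensures their intersection with $H^T_R$ remains non-empty.
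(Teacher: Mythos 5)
Your proposal is correct, and the high-level skeleton (Theorem~\ref{thm:mcone} produces the Mori fiber space, Theorem~\ref{thm:moduli}$_n$ factors $\mu$ through it, constancy follows from contraction to a fiber) matches the paper. However, the way you arrange the general-position conditions differs from the paper in an interesting way. You define $H_{R,\mu}$ as the simultaneous intersection of three separately-verified non-empty open loci in $H_R$: curves avoiding $T$ (via property (\ref{item:codim}) of Theorem~\ref{thm:mcone}), curves not contained in $\Supp\lrd\Delta\rrd$, and curves meeting the dense open $W$ on which $\mu = \nu\circ\pi_R\circ\lambda_R$ holds as morphisms. Non-emptiness of the last two conditions then rests on the observation that $H_R$ parametrizes a dominating family (which indeed follows from the construction in Lemma~\ref{lem:moving}), together with irreducibility of $H_R$. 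The paper instead packages everything into a single avoidance condition: it picks a very ample divisor $A$ in general position, observes that $S := \Supp A \cap (X\setminus U)$ has codimension $\geq 2$ (where $U$ is the common locus of definition built from the domains of $\lambda$, $\mu$, and $\nu\circ\pi$), takes $H_{R,\mu} := H^S_R$ from property (\ref{item:codim}) of Theorem~\ref{thm:mcone}, and then deduces that every such curve, since it must meet $A$ by ampleness and avoids $S$, in fact meets $U$. This buys a cleaner argument since properties (\ref{item:general}) and (\ref{item:constant}) fall out simultaneously from the single intersection $C\cap A\cap U\neq\emptyset$, and the non-emptiness of $H_{R,\mu}$ is immediate from Theorem~\ref{thm:mcone} itself rather than requiring a supplementary domination argument. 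Your route is more elementary in spirit but requires you to explicitly invoke both the dominating character of the family and the irreducibility of $H_R$; both are available, so there is no gap — only a different bookkeeping.
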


\begin{proof}
We apply the Moving Cone Theorem~\ref{thm:mcone} and obtain an associated minimal model program $\lambda : X\rto X'$ and a Mori fibration $\pi: X'\to B$ such that any curve that corresponds to a point of $H_R$ is contained in the locus where $\lambda$ is well-defined and is mapped to a fiber of $\pi$. Theorem \ref{thm:moduli}$_n$ gives a commutative diagram of rational maps
\[
\begin{xy}
\xymatrix{
X \ar@{ -->}[rr]^\lambda \ar@{ -->}[d]_\mu  && X'\ar[d]^\pi \\
\fM && B,\ar@{ -->}[ll]
}
\end{xy}
\]
which becomes a diagram of morphisms on appropriate non-empty open sets. More precisely, let $V\subset B$ be the domain of $B\rto\fM$ and let $U'\subset X$ be the intersection of the domains of $\mu$ and $\lambda$. Then, if we set $U:=\lambda|_{U'}^{-1}(\pi^{-1}(V))$, we obtain the following commutative diagram of morphisms  
\[
\begin{xy}
\xymatrix{
U \ar[rr]^\lambda \ar[d]_\mu  && \pi^{-1}(V)\ar[d]^\pi \\
\fM && V.\ar[ll]
}
\end{xy}
\]
Let $A$ be a very ample divisor on $X$ in general position. Then the intersection $S:=(\Supp A\cap (X\setminus U))\subset X$ is a subvariety of $\codim_X(S)\geq 2$. Property~(\ref{item:codim}) of Theorem~\ref{thm:mcone} implies that there is an open subset $H^S_R$ of $H_R$ such that any closed point of $H^S_R$ corresponds to a curve that avoids $S$. 
We set $H_{R,\mu}:=H^S_R$, and it remains to show that $H_{R,\mu}$ has the required properties (\ref{item:general}), (\ref{item:constant}) and (\ref{item:codim2}).
Let $C\subset X$ be curve that corresponds to a closed point of $H_{R,\mu}$.

Since $A$ is chosen to be ample, $C$ intersects $A$ positively in a point $p\in\Supp A$. By definition, $p\notin X\setminus U$, which implies (\ref{item:general}).  

Since $C$ is not entirely contained in $X\setminus U$, the image $\pi\circ\lambda(C\cap U)$ is a point of $V$, thus the family is isotrivial on $C$. This implies (\ref{item:constant}).

To prove the last property (\ref{item:codim2}), recall that $H_R$ is irreducible and $H_{R,\mu}\subset H_R$ is open. For $Z\subset X$ of $\codim_X(Z)\geq 2$ let $H^Z_R$ be as in Property (\ref{item:codim}) of Theorem \ref{thm:mcone}. We set $H^Z_{R,\mu}:= H_{R,\mu}\cap H^Z_R$ which is non-empty and open in $H_{R,\mu}$. This implies (\ref{item:codim2}).
\end{proof}

\begin{lem}\label{lem:induction} Theorem \ref{thm:moduli}$_n$ implies Theorem \ref{thm:isotrivial}$_n$.
\end{lem}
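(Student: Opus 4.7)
The plan is to argue by contradiction. Assume $\mu$ is not constant; the idea is to produce an effective Weil divisor $D$ on $X$ that \emph{separates} values of $\mu$, and then to exhibit, via the Moving Cone Theorem, a moving curve $C$ on which $\mu$ must be constant but which meets $D$ positively, contradicting the way $D$ was chosen.

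First I would produce $D$. Consider a Stein factorization $X^\circ \to \Sigma \to \fM$ of $\mu$, so that $\dim\Sigma\geq 1$ and the first arrow has connected fibers. A sufficiently general effective divisor $D_\Sigma$ on $\Sigma$ (obtained, say, by restricting a general very ample divisor on a projective model of $\Sigma$) gives, on pulling back to $X^\circ$ and taking the closure in $X$, an effective Weil divisor $D$ on $X$. By construction every component of $D$ meets $X^\circ$, so no component of $D$ lies in $\Supp\lrd\Delta\rrd$, and therefore $D\cap\Supp\lrd\Delta\rrd$ has codimension at least two in $X$.

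Next I would pass to a $\bQ$-factorialization tailored to $D$. Applying Proposition \ref{prop:transversal} to $(X,\Delta)$ and this $D$ yields a $\bQ$-factorialization $f:Y\to X$ together with a $(K_Y+\Delta_Y)$-negative exposed ray $R$ of $\ol\NM_1(Y)+\ol\NE_1(Y)_{K_Y+\Delta_Y\geq 0}$ with $R\not\subset D_Y^\perp$, where $D_Y$ is the strict transform of $D$. Since $f$ is small, $Y^\circ:=f^{-1}(X^\circ)$ equals $Y\setminus(f^{-1}(T)\cup\Supp\lrd\Delta_Y\rrd)$ and maps isomorphically onto $X^\circ$, so the family and moduli map lift; set $\mu_Y:=\mu\circ f|_{Y^\circ}$. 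Because $-(K_Y+\Delta_Y)=-f^*(K_X+\Delta)$ is big and nef by Lemma \ref{lem:bignef}, $K_Y+\Delta_Y$ is not pseudo-effective and hence has negative Kodaira--Iitaka dimension, so Theorem \ref{thm:moduli}$_n$ applies on $Y$. Applying Proposition \ref{prop:curvetriv} with codimension-two subset $Z:=f^{-1}(T)\cup(D_Y\cap\Supp\lrd\Delta_Y\rrd)$ then produces a curve $C\subset Y$ representing $R$, avoiding $Z$, not contained in $D_Y$ (general members of the moving family $H_R$ cover a dense subset of $Y$), and on which $\mu_Y$ is constant.

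To close the argument I read off an intersection number in two ways. Since the generator of $R$ is a moving class and $D_Y$ is pseudo-effective, duality forces $D_Y\cdot C>0$, so $C\cap D_Y\neq\emptyset$. On the other hand, constancy of $\mu_Y$ on the infinite set $C\cap Y^\circ$ means $f(C\cap Y^\circ)$ lies in a single fiber of $X^\circ\to\Sigma$. Either that fiber meets $D_\Sigma$, in which case $C\cap Y^\circ\subset D_Y$ and hence $C\subset D_Y$, contradicting the choice of $C$; or it avoids $D_\Sigma$, in which case $C\cap D_Y\subset Y\setminus Y^\circ\subset f^{-1}(T)\cup\Supp\lrd\Delta_Y\rrd$, so $C\cap D_Y\subset Z$, which is empty by construction. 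Either horn contradicts $D_Y\cdot C>0$, so $\mu$ must be constant. I expect the principal obstacle to be the bookkeeping around $D$: ensuring simultaneously that $D_Y$ pairs positively with $R$, that $D\cap\Supp\lrd\Delta\rrd$ is small enough in codimension to be avoided by members of $H_R$, and that the codimension-two freedom of Theorem \ref{thm:mcone} and Proposition \ref{prop:curvetriv} is exploited in the right order.
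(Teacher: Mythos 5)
Your proof is correct and follows the paper's strategy closely: argue by contradiction, produce an effective Weil divisor $D$ on $X$ that witnesses non-constancy of $\mu$, pass to the $\bQ$-factorialization from Proposition~\ref{prop:transversal}, use Proposition~\ref{prop:curvetriv} to produce a moving curve on which $\mu$ is constant yet which must meet $D_Y$ because $R\not\subset D_Y^\perp$, and derive the contradiction. The one deviation --- interposing a Stein factorization $X^\circ\to\Sigma\to\fM$ --- adds nothing over the paper's simpler move of pulling back a general hyperplane section of the quasi-projective variety $\fM$ directly, and it carries a small wrinkle: Stein factorization is a theorem about \emph{proper} morphisms, $\mu$ need not be proper, and you would have to pass through a compactification to justify it; since your connectedness argument (that $f(C\cap Y^\circ)$ lands in a single point of $\Sigma$) works equally well with $\Sigma=\fM$ and $D_\Sigma$ a general hyperplane section, I would drop the Stein factorization.
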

\begin{proof}
  Let $(X,\Delta)$ and $T$ be as in Theorem~\ref{thm:isotrivial}, and that
  $\dim X = n$. Let $\fX\to X\setminus (T\cup\Supp\lrd\Delta\rrd)$ be a smooth
  projective family of canonically polarized manifolds.  As before, we denote
  by $\mu:X\rto\fM$ the induced moduli map to the coarse moduli space of
  canonically polarized manifolds. To prove that $\mu$ is constant we argue by
  contradiction and assume that this is not the case.  Since $\fM$ is
  quasi-projective, see \cite[Theorem 1.11]{Vie95}, we may choose a general
  hyperplane section $H$ on $\fM$. This is a divisor which intersects the
  image of $\mu$ properly, hence we can take its strict transform via $\mu$,
  denoted by $D_X\in\WDiv(X)$.  This is an effective Weil divisor to which we
  apply Proposition \ref{prop:transversal}. Accordingly, we obtain a
  $\bQ$-factorialization $f: Y \to X$ with boundary divisor $\Delta_Y :=
  f^{-1}_* \Delta$ and a $(K_Y+\Delta_Y)$-negative exposed ray $R$ of the cone
  $\ol\NM_1(Y)+\ol\NE_1(Y)_{K_Y+\Delta_Y\geq 0}$ which is not contained in the
  hyperplane $(f^{-1}_*(D_X))^\perp$ defined by the strict transform
  $D_Y:=f^{-1}_*(D_X)$. Observe that the family over
  $X\setminus(T\cup\Supp\lrd\Delta\rrd)$ can be pulled back along $f$ to a
  family over $Y\setminus(f^{-1}(T)\cup\Supp\lrd\Delta_Y\rrd)$, and the
  induced moduli map is given by $\mu_Y:=\mu\circ f$. Since $f$ is small, the
  set $f^{-1}(T)$ has codimension greater than or equal to two, thus the
  conditions of Proposition \ref{prop:curvetriv} are still satisfied.

 Consequently, we obtain a subset $H_{R,\mu_Y}$ of the Hilbert scheme such that $\mu_Y$ is constant on any curve $C$ in $H_{R,\mu_Y}$. 
   Denote by $S\subset \Supp D_Y$ the set of points where the moduli map $\mu_Y$ is not defined. Since $\codim_YS\geq 2$ and because of Property~(\ref{item:codim2}) of Proposition~\ref{prop:curvetriv}, there is an open subset $H^S_{R,\mu_Y}$ of $H_{R,\mu_Y}$ such that the curves that correspond to this subset avoid $S$. Moreover, if $A$ is a very ample divisor in general position on $Y$, then we can assume, after shrinking $H_{R,\mu_Y}$ if necessary, that any such curve avoids $(\Supp A)\cap(\Supp D_Y)$. In particular, any curve that corresponds to a closed point of $H_{R,\mu_Y}$ is not entirely contained in $\Supp D_Y$.

Let $C$ be an arbitrary curve that corresponds to a closed point of $H^S_{R,\mu_Y}$. Due to Proposition \ref{prop:curvetriv}, the image of $C$ is a point in $p\in\fM$. Since $C$ intersects $D_Y$ outside $S$, this point $p$ is an element of the hyperplane section $H$ which in turn implies that $C$ is contained in $D_Y$. This finally contradicts the choice of $C$.
\end{proof}
\begin{rem} \label{rem:ample}
Note that the assumption that $(X,\Delta)$ is log Fano is only needed to apply Proposition \ref{prop:transversal}. More precisely, the proof of Theorem \ref{thm:isotrivial} still works if we assume that Proposition \ref{prop:transversal} holds for the pair $(X,\Delta)$, instead of assuming that $(X,\Delta)$ log Fano.
\end{rem}

\subsubsection{Theorem \ref*{thm:isotrivial}$_n$ implies Theorem \ref*{thm:moduli}$_{n+1}$, end of proof}
To finish the proof, we show the following
\begin{lem}
Theorem~\ref{thm:isotrivial}$_n$ implies Theorem~\ref{thm:moduli}$_{n+1}$.
\end{lem}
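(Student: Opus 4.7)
The plan is to run a terminating minimal model program for the $(n+1)$-dimensional $\bQ$-factorial dlt pair $(X,\Delta)$, producing a Mori fiber space $\pi:X'\to B$, to transfer the moduli map $\mu$ across this MMP to a rational moduli map $\mu'$ on $X'$, and then to factor $\mu'$ through $\pi$ by applying Theorem~\ref{thm:isotrivial}$_n$ to a general fiber of $\pi$ (or Lemma~\ref{lem:picard} when $B$ is a point).

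Because $\kappa(X,K_X+\Delta)=-\infty$ the divisor $K_X+\Delta$ is not pseudo-effective, so Theorem~\ref{thm:dltmmp} shows that an MMP with scaling of a suitably chosen ample divisor $H$ exists, is terminating, and must end with a Mori fiber space; denote the resulting birational map by $\lambda:X\rto X'$ and the fibration by $\pi:X'\to B$. I would then push $\mu$ across this MMP one step at a time. Each step is either a flip, which is an isomorphism in codimension one, or a divisorial contraction whose exceptional image sits in codimension at least two in the target. In both cases the bad locus $T$ can be enlarged by (the strict transform of) the exceptional image so that the smooth family transfers homeomorphically on the complement, yielding inductively a rational moduli map $\mu':X'\rto\fM$ regular on $X'\setminus(T'\cup\Supp\lrd\Delta'\rrd)$ for some $T'\subset X'$ of codimension at least two, with $\mu=\mu'\circ\lambda$ as rational maps. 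It therefore suffices to show that $\mu'$ factors rationally through $\pi$.

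If $\dim B\geq 1$, a general fiber $F$ of $\pi$ has dimension at most $n$. Since $-(K_{X'}+\Delta')$ is $\pi$-ample, adjunction gives that $-(K_F+\Delta'|_F)$ is ample, and a Bertini-type argument for dlt pairs shows that $(F,\Delta'|_F)$ is dlt; hence $(F,\Delta'|_F)$ is a log Fano dlt pair of dimension at most $n$. For general $F$ the intersection $T'|_F$ has codimension at least two in $F$, and the transferred family restricts to a smooth family of canonically polarized manifolds over $F\setminus(T'|_F\cup\Supp\lrd\Delta'|_F\rrd)$. Applying Theorem~\ref{thm:isotrivial}$_n$ to this restricted data shows that the family is isotrivial on $F$, so $\mu'|_F$ is constant; as general fibers dominate $X'$, $\mu'$ factors rationally through $\pi$, yielding $\nu:B\rto\fM$ with $\mu=\nu\circ\pi\circ\lambda$. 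In the remaining case $\dim B=0$ the variety $X'$ has Picard number one and $(X',\Delta')$ is dlt log Fano of dimension $n+1$, so Lemma~\ref{lem:picard} (whose dimension-independent proof relies on Theorem~\ref{thm:picard} and thus on the Bogomolov-Sommese vanishing of \cite{GKKP10}) applies directly to $\mu'$ and shows that it is constant, whence $\mu$ is constant as well.

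The hard part will be the technical bookkeeping associated with transferring the family and the moduli map across each MMP step: one must verify that the pushed-forward family indeed extends to a smooth family over the complement of a codimension-two subvariety in each intermediate $X_i$ (the $\bQ$-factorial dlt condition is preserved throughout the MMP, but care is required to absorb the new singular and exceptional loci into $T_i$), and that for a general fiber $F$ of $\pi$ the restricted data $(F,\Delta'|_F,T'|_F)$ still satisfies the precise hypotheses of Theorem~\ref{thm:isotrivial}$_n$. These amount to standard Bertini-type arguments combined with the singularity control coming from the dlt property, but they are the genuine technical content of the argument.
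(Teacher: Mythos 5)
Your proposal follows essentially the same route as the paper: transfer the family and moduli map across the MMP to $X'$, apply Theorem~\ref{thm:isotrivial}$_n$ to a general fiber of $\pi$ when $\dim B\geq 1$, and invoke Lemma~\ref{lem:picard} when $\rho(X')=1$. Two points of divergence are worth flagging. First, Theorem~\ref{thm:moduli} is universally quantified over \emph{all} terminating minimal model programs $\lambda:X\rto X'$, so the right move is to take an arbitrary given terminating MMP rather than to invoke Theorem~\ref{thm:dltmmp} to construct a specific one with scaling; your argument in fact uses nothing special about the scaling, so the fix is purely cosmetic, but as written you prove an existence statement weaker than what is claimed. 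Second, the paper avoids the step-by-step bookkeeping you anticipate as the ``hard part'': since $\lambda^{-1}$ has indeterminacy locus of codimension at least two in $X'$ (each step of the MMP is a morphism contracting a divisor to a codimension~$\geq 2$ set, or an isomorphism in codimension one), one can set $T'$ equal to the union of the indeterminacy locus of $\lambda^{-1}$ and the closure of $\lambda(T)$, observe $\codim_{X'}T'\geq 2$, and pull the family back in one stroke to $X'\setminus(T'\cup\Supp\lrd\Delta'\rrd)$. This sidesteps the inductive transfer entirely and removes the technical verification you flagged at the end. With those adjustments the argument matches the paper's proof.
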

\begin{proof}
  Let $\lambda:X\rto X'$ be a minimal model program which leads to a Mori
  fiber space $\pi:X'\to B$. Set $\Delta':=\lambda_*\Delta$, and let $T'$ be
  the union of the indeterminacy locus of $\lambda^{-1}$ and the closure of
  the image of $T$. Note that $\codim_{X'}T'\geq 2$ holds. We use
  $\lambda^{-1}$ to pull the family back to a family $f':Y'\to
  X'\setminus(\Supp\lrd\Delta'\rrd\cup T')$. Then we have to show that the
  family is isotrival on a general fiber of $\pi$.

  If the Picard number $\rho(X')$ of $X'$ is one, then $(X',\Delta')$ is in
  particular log Fano. In this case Lemma \ref{lem:picard} implies the
  assertion.
 
  Otherwise, if $\rho(X')>1$, then $\dim B\geq 1$. Let $F$ be a general fiber
  of $\pi$, then $(F,\Delta'|_F)$ is dlt log Fano. Moreover, $\codim_F(F\cap
  T')\geq 2$, and $\lrd\Delta'|_F\rrd = \lrd\Delta'\rrd|_F$. Since $\dim F\leq
  n$, Theorem \ref{thm:isotrivial}$_n$ implies that the family restricted to
  $F$ is isotrivial, which finishes the proof.
\end{proof}

\section{A corollary of Theorem~\ref*{thm:isotrivial}}\label{sec:corollary}
We are now able to discuss some properties of the cone
\[
\ol\NM_1(X)+\ol\NE_1(X)_{K_X+\Delta \geq 0}.
\]
First we recall some well-known facts.
\begin{fact}[\protect{\cite[Theorem 1.1]{Leh09} and \cite[Corollary 1.35]{BCHM10}}]
Let $(X,\Delta)$ be a $\bQ$-factorial dlt pair, 
Then the following holds.
\begin{itemize}
  \item If $-(K_X+\Delta)$ is ample, then $\ol\NM_1(X)$ is a rational polyhedron.
  \item More generally, there are countably many rays $(R_i)_{i\in\bN}\subset \ol\NM_1(X)$ such that 
\[
\ol\NM_1(X)+\ol\NE_1(X)_{K_X+\Delta \geq 0} = \ol\NE_1(X)_{K_X+\Delta \geq 0}+\sum_i R_i.
\]
These rays are locally discrete away from hyperplanes that support both $\ol\NE_1(X)_{K_X+\Delta\geq 0}$ and $\ol\NM_1(X)$.
\end{itemize}
\end{fact}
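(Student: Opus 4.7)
The first bullet is immediate from Corollary~\ref{prop:mds}: since $(X,\Delta)$ is already assumed $\bQ$-factorial, the identity $\mathrm{id}_X$ serves as a $\bQ$-factorialization of $X$, and the corollary yields polyhedrality of $\ol\NM_1(X)$ directly.

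For the second bullet, write $\sC := \ol\NM_1(X)+\ol\NE_1(X)_{K_X+\Delta \geq 0}$. Since $\ol\NE_1(X)_{K_X+\Delta\geq 0}$ lies in the closed halfspace $\{K_X+\Delta\geq 0\}$, every extremal ray of $\sC$ that is contained in $\{K_X+\Delta<0\}$ is automatically contained in $\ol\NM_1(X)$. The plan is to take the $R_i$ to be precisely the $(K_X+\Delta)$-negative extremal rays of $\sC$; the decomposition $\sC = \ol\NE_1(X)_{K_X+\Delta\geq 0}+\sum_i R_i$ then reduces to a standard convex-geometric argument once the $R_i$ are shown to generate the $(K_X+\Delta)$-negative portion of $\sC$.

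To produce the $R_i$, I would imitate the proof of Theorem~\ref{thm:mcone}. Every $(K_X+\Delta)$-negative exposed ray $R$ of $\sC$ is the target of an MMP with scaling of a suitable ample divisor $H$ by Lemma~\ref{lem:bounding}, and Theorem~\ref{thm:dltmmp} guarantees this MMP terminates at a Mori fiber space; combining this with Lemma~\ref{lem:moving} certifies $R\subset\ol\NM_1(X)$. Since extremal rays of a closed convex cone in a finite-dimensional vector space can be approximated by exposed rays, a closure argument recovers every extremal ray of $\sC$ in the $(K_X+\Delta)$-negative halfspace as a limit of rays produced this way.

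The main obstacle is the countability and local-discreteness statement. I would argue by contradiction: if infinitely many $R_{j_k}$ accumulate at some interior ray $R_\infty$ of $\sC\cap\{K_X+\Delta<0\}$ that avoids every common supporting hyperplane of $\ol\NM_1(X)$ and $\ol\NE_1(X)_{K_X+\Delta\geq 0}$, then a cross-sectional compactness argument, combined with termination of the MMP with scaling, should force only finitely many $R_i$ in a neighborhood of $R_\infty$, with countability following by exhausting a cross-section by a countable sequence of such neighborhoods. The technical challenge is transferring the local discreteness known for the Mori cone of a klt perturbation (obtained via Proposition~\ref{prop:kltdlt} and the Mori-dream-space structure from Lemma~\ref{lem:kltmds}) to the moving part of $\sC$ for the original dlt pair; this interplay between $\ol\NM_1(X)$ and $\ol\NE_1(X)_{K_X+\Delta\geq 0}$ is what makes the argument genuinely more subtle than the classical Cone Theorem for $\ol\NE_1(X)$ alone.
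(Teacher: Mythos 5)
The paper does not prove this statement at all: it is recorded as a \emph{Fact} and cited to Lehmann (\cite[Theorem 1.1]{Leh09}) and BCHM (\cite[Corollary 1.35]{BCHM10}). Your attempt is therefore not comparable to an argument in the paper but is a self-contained reconstruction, and it has real gaps.

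The derivation of the first bullet from Corollary~\ref{prop:mds} is fine: since $X$ is already $\bQ$-factorial, $\mathrm{id}_X$ is a $\bQ$-factorialization, and the corollary yields polyhedrality of $\ol\NM_1(X)$. (Be aware that Corollary~\ref{prop:mds} funnels through Lemma~\ref{lem:kltmds}, which itself invokes BCHM for the moving cone, so this is a restatement of the BCHM source rather than an independent proof.) Your convex-geometric observation that every $(K_X+\Delta)$-negative extremal ray of $\sC$ lies in $\ol\NM_1(X)$ is also correct: if $r = c_1 + c_2$ with $c_1\in\ol\NM_1(X)$, $c_2\in\ol\NE_1(X)_{K_X+\Delta\geq 0}$, extremality forces $c_2\in R\cap\{K_X+\Delta\geq 0\}=\{0\}$.

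The second bullet, however, is where the argument breaks down, and you correctly flag this as ``the main obstacle'' but do not resolve it. Two concrete problems. First, the perturbation you propose (``transferring the local discreteness known for the Mori cone of a klt perturbation\dots via Proposition~\ref{prop:kltdlt} and\dots Lemma~\ref{lem:kltmds}'') does not apply here: Lemma~\ref{lem:kltmds} requires $-(K_X+\Delta)$ to be big and nef, while the second bullet makes no positivity assumption on $-(K_X+\Delta)$ whatsoever. Moreover, replacing $\Delta$ by a klt $\Delta_\varepsilon$ changes the cone $\ol\NE_1(X)_{K_X+\Delta_\varepsilon\geq 0}$, so even when the perturbation is available, rays of $\sC$ for the dlt pair are not directly identified with rays for the perturbed pair; that identification is exactly the hard content. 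Second, Theorem~\ref{thm:mcone} and the Straszewicz density argument produce, for each \emph{exposed} negative ray, a moving family of curves, but they give no \emph{quantitative} control over how these rays are distributed; yet countability and local discreteness away from supporting hyperplanes require such control. Saying a cross-sectional compactness argument ``should force only finitely many $R_i$ in a neighborhood'' is the conclusion of Lehmann's theorem, not a proof of it. As written this portion is circular with the very statement being proven, and the sketch does not constitute a proof of the second bullet.
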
If $(X,\Delta)$ is a pair that admits a family of positive variation we can apply our proof of Theorem \ref{thm:isotrivial} to obtain another result. Remark \ref{rem:ample} implies that Proposition \ref{prop:transversal} cannot hold for $(X,\Delta)$. This in turn implies the following observation.
\begin{obs}
If $(X,\Delta)$ is a dlt pair that admits a non-isotrivial family, then Proposition \ref{prop:transversal} does not hold for $(X,\Delta)$. In particular, if $X$ is $\bQ$-factorial, then there is a hyperplane $H\subset N_1(X)$ such that any $(K_X+\Delta)$-negative exposed ray of $\ol\NM_1(X)+\ol\NE_1(X)_{K_X+\Delta\geq 0}$ is contained in $H$.
\end{obs}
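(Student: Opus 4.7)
The proof is essentially a contrapositive of the argument for Theorem~\ref{thm:isotrivial}, repackaged using Remark~\ref{rem:ample}. The plan is to show that if Proposition~\ref{prop:transversal} did hold for $(X,\Delta)$, then any smooth family of canonically polarized manifolds over the smooth locus (minus a subvariety of codimension at least two and $\Supp\lrd\Delta\rrd$) would be forced to be isotrivial, contradicting our hypothesis.

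First, I would revisit the proof of Lemma~\ref{lem:induction}. Inspection shows that the log Fano hypothesis on $(X,\Delta)$ is only used to invoke Proposition~\ref{prop:transversal} once, applied to the strict transform $D_X$ of a general hyperplane section of the coarse moduli space $\fM$. This is precisely the content of Remark~\ref{rem:ample}: if one could verify the conclusion of Proposition~\ref{prop:transversal} for $(X,\Delta)$ directly, the entire argument of Theorem~\ref{thm:isotrivial} goes through. Thus the assumption that $(X,\Delta)$ admits a non-isotrivial family immediately yields, by contraposition, that Proposition~\ref{prop:transversal} fails for $(X,\Delta)$.

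Next I would unpack the negation of Proposition~\ref{prop:transversal}: there exists a nonzero effective $\bR$-Weil divisor $D$ on $X$ such that for \emph{every} $\bQ$-factorialization $f\colon Y\to X$, every $(K_Y+\Delta_Y)$-negative exposed ray of $\ol\NM_1(Y)+\ol\NE_1(Y)_{K_Y+\Delta_Y\geq 0}$ is contained in $(f^{-1}_*D)^\perp$. When $X$ itself is $\bQ$-factorial, the identity morphism $\mathrm{id}\colon X\to X$ is a $\bQ$-factorialization, so applying the negation with $Y=X$, $\Delta_Y=\Delta$ and $D_Y=D$ shows that every $(K_X+\Delta)$-negative exposed ray of $\ol\NM_1(X)+\ol\NE_1(X)_{K_X+\Delta\geq 0}$ lies in the hyperplane $H:=D^\perp\subset N_1(X)$. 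Since $D$ is a nonzero $\bR$-divisor, $H$ is indeed a (proper) hyperplane, and the observation follows.

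The only mildly subtle point is the careful statement of the negation of Proposition~\ref{prop:transversal}, together with the application of Remark~\ref{rem:ample}; no new geometric input is required beyond what has already been established in Section~\ref{sec:isotrivial}. In particular, no independent construction of $D$ is needed here because the existence of $D$ is built into the logical negation of the statement of Proposition~\ref{prop:transversal}.
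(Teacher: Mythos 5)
Your argument is correct and matches the paper's own reasoning exactly: the text immediately preceding the Observation argues by contraposition via Remark~\ref{rem:ample} that Proposition~\ref{prop:transversal} must fail, and the "in particular" clause is then obtained by negating Proposition~\ref{prop:transversal} and specializing to the identity $\bQ$-factorialization when $X$ is $\bQ$-factorial, with $H = D^\perp$ (a genuine hyperplane because $D$ is effective and nonzero, hence numerically nontrivial). No gaps, and no meaningful deviation from the author's intended proof.
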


\end{document}